\newtheorem{X}{X}[section]
\newtheorem{corollary}[X]{Corollary}
\newtheorem{lemma}[X]{Lemma}
\newtheorem{proposition}[X]{Proposition}
\newtheorem{theorem}[X]{Theorem}
\theoremstyle{definition}
\newtheorem{remark}[X]{Remark}
\newcommand{\be}{\overrightarrow{\beta}}
\renewcommand{\be}{\begin{equation}}
\newcommand{\ee}{\end{equation}}
\newcommand\bea{\begin{eqnarray}}
\newcommand\eea{\end{eqnarray}}
\newcommand\bi{\begin{itemize}}
\newcommand\ei{\end{itemize}}
\newcommand\ben{\begin{enumerate}}
\newcommand\een{\end{enumerate}}
\newcommand\bc{\begin{center}}
\newcommand\ec{\end{center}}
\newcommand\ba{\begin{array}}
\newcommand\ea{\end{array}}
\title{On Vaughan's approximation: The first moment}
\author{Daniel Fiorilli}
\address{Département de mathématiques et de statistique, Université d'Ottawa, 
585 King Edward, Ottawa, Ontario, K1N 6N5, Canada}
\email{daniel.fiorilli@uottawa.ca}
\date{\today}
\begin{document}

\begin{abstract}

We investigate the first moment of the difference between $\psi(x;q,a)$ and Vaughan's approximation, in a certain range of $q$. We show that this last approximation is significantly more precise than the classical $x/\phi(q)$, and that it captures the discrepancies of the distribution of primes in arithmetic progressions found in an earlier paper of the author. 
\end{abstract}
\maketitle

\section{Introduction}

The moments of the error term in the prime number theorem in arithmetic progressions
are a central object of study and have been extensively studied in the literature. 
Upper bounds for the first moment, which apply to the Titchmarsh divisor problem, were obtained by Fouvry \cite{Fo}, Bombieri, Friedlander and Iwaniec \cite{BFI}, Friedlander and Granville \cite{FG} and Friedlander, Granville, Hildebrand and Maier \cite{FGHM}.

\begin{theorem}[{\cite[Theorem 1]{FG}, \cite[Proposition 2.1]{FGHM}}]
\label{theorem FG}
Let $0<\lambda<1/4$, $A>0$ be given. Then uniformly for $0<|a|<x^{\lambda}$, $2\leq Q \leq x/3$ we have 
\begin{equation}
\label{eq:FG}
\sum_{\substack{Q<q\leq 2Q \\ (q,a)=1}} \left( \psi(x;q,a)-\frac {\psi(x)}{\phi(q)} \right) \ll_{\lambda,A} 2^{\omega(a)} Q \log(x/Q) + \frac x{(\log x)^A} +Q\log |a|.
\end{equation}
\end{theorem}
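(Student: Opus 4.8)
\emph{Plan.} The strategy is a divisor-switching argument. For a prime $n\equiv a\pmod q$ with $Q<q\le 2Q$ one writes $q=(n-a)/d$, where $d$ is the complementary divisor of $n-a$, and $n\le x$ forces $d<x/Q$; summing over the progressions then converts the sum over the \emph{large} moduli $q\asymp Q$ into a sum over the \emph{small} moduli $d<x/Q$, where primes in arithmetic progressions are accessible. We may take $a>0$ (the case $a<0$ is analogous) and bound the left side of \eqref{eq:FG} in absolute value. If $Q\le x^{1/2}(\log x)^{-B}$ with $B=B(A)$ large enough, the Bombieri--Vinogradov theorem already gives $\sum_{Q<q\le 2Q}\bigl|\psi(x;q,a)-\psi(x)/\phi(q)\bigr|\ll_A x(\log x)^{-A}$, so from now on $Q>x^{1/2}(\log x)^{-B}$, in particular $Q>|a|$.

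Exchanging the order of summation in
\[
\sum_{\substack{Q<q\le 2Q\\(q,a)=1}}\psi(x;q,a)=\sum_{n\le x}\Lambda(n)\,\#\{q\in(Q,2Q]:q\mid n-a,\ (q,a)=1\},
\]
the prime powers $n=p^k$, $k\ge2$, contribute $O(x^{1/2+\varepsilon})$ (bounding the number of divisors of $n-a$ in $(Q,2Q]$ by $\tau(n-a)\ll_\varepsilon x^{\varepsilon}$), the term $n=a$ contributes $O(Q\log|a|)$, and for the surviving primes $n=p$ (necessarily $p>a$, hence $p\nmid a$) the condition $(q,a)=1$ is automatic once $q\mid p-a$, since $\gcd(q,a)\mid\gcd(p,a)=1$. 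Setting $q=(n-a)/d$, so that $q\in(Q,2Q]$ becomes $n\in(a+Qd,a+2Qd]$ with $n\equiv a\pmod d$ --- which, for $p\nmid a$, forces $(d,a)=1$ --- and reinstating the prime powers inside each $\psi$ at a further cost of $O(x^{1/2+\varepsilon})$, one obtains
\[
\sum_{\substack{Q<q\le 2Q\\(q,a)=1}}\psi(x;q,a)=\sum_{\substack{1\le d<x/Q\\(d,a)=1}}\Bigl(\psi\bigl(\min(x,a+2Qd);d,a\bigr)-\psi\bigl(\min(x,a+Qd);d,a\bigr)\Bigr)+O_A\!\Bigl(\frac{x}{(\log x)^A}+Q\log|a|\Bigr).
\]

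Next, replace the inner count for each $d$ by $\ell(d)/\phi(d)$, where $\ell(d)=\min(x-a,2Qd)-\min(x-a,Qd)$. The total error is at most $\sum_{d<x/Q}\max_{y\le x}\bigl|\psi(y;d,a)-\psi(y)/\phi(d)\bigr|+O_A(x(\log x)^{-A})$, and the sum here is $\ll_A x(\log x)^{-A}$ by Bombieri--Vinogradov as soon as $x/Q\le x^{1/2}(\log x)^{-B}$, i.e. $Q\ge x^{1/2}(\log x)^{B}$; in the intermediate window $x^{1/2}(\log x)^{-B}<Q<x^{1/2}(\log x)^{B}$ the moduli $d$ reach slightly past $x^{1/2}$, and there one invokes an equidistribution result for primes in the \emph{fixed} progression $a\bmod d$ valid to level a little beyond $x^{1/2}$, as in \cite{BFI} and \cite{Fo} --- and it is precisely here that the hypothesis $0<|a|<x^{\lambda}$ with $\lambda<1/4$ is used. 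It then remains to compare $\sum_{d<x/Q,\ (d,a)=1}\ell(d)/\phi(d)$ with $\psi(x)\sum_{Q<q\le 2Q,\ (q,a)=1}1/\phi(q)$: splitting at $d=(x-a)/(2Q)$ and using the elementary estimates $\sum_{d\le t,\ (d,a)=1}d/\phi(d)=\kappa(a)t+O(2^{\omega(a)}\log t)$ and $\sum_{d\le t,\ (d,a)=1}1/\phi(d)=\kappa(a)\log t+\beta(a)+O(2^{\omega(a)}(\log t)/t)$, where $\kappa(a)=\frac{\phi(a)}{a}\prod_{p\nmid a}\bigl(1+\frac1{p(p-1)}\bigr)\ll1$, one finds that the two quantities differ by $O(2^{\omega(a)}Q\log(x/Q))+O_A(x(\log x)^{-A})$ --- both being $\kappa(a)x\log2+O_A(x(\log x)^{-A})$ --- the factor $2^{\omega(a)}$ coming from the remainder in $\#\{d\le t:(d,a)=1\}=\frac{\phi(a)}{a}t+O(2^{\omega(a)})$. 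Combining these estimates gives \eqref{eq:FG}.

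The main obstacle is the intermediate range of $Q$, i.e. moduli of size near $x^{1/2}$: the large sieve alone does not reach there, and one must use the substantially deeper information on primes in the fixed progression $a\bmod d$ for $d$ slightly beyond $x^{1/2}$ provided by \cite{BFI} and \cite{Fo}, which is also why $|a|$ is constrained by $\lambda<1/4$. A secondary, more clerical difficulty is the final comparison, where the truncation $d<x/Q$, the cut-off $n\le x$ (the two $\min$'s in $\ell(d)$), and the coprimality condition $(d,a)=1$ must all be tracked so that the main terms on the two sides cancel and the leftover is no larger than $2^{\omega(a)}Q\log(x/Q)$.
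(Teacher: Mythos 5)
The paper does not prove this theorem---it is imported verbatim from \cite{FG} and \cite{FGHM}---so I am comparing your argument against the source. Your plan is exactly the Friedlander--Granville divisor-switching proof, and the bookkeeping is essentially right: the term $n=a$ producing $Q\log|a|$, the automatic coprimality of $q$ and $d$ with $a$ once $n$ is a prime exceeding $|a|$, and the final comparison of $\sum_{d<x/Q}\ell(d)/\phi(d)$ with $\psi(x)\sum_{Q<q\le 2Q}1/\phi(q)$, both equal to $\kappa(a)x\log 2$ up to $O\bigl(2^{\omega(a)}Q\log(x/Q)\bigr)$, all check out. One step does not go through as written: in the intermediate window you bound the switching error by $\sum_{d<x/Q}\max_{y\le x}\bigl|\psi(y;d,a)-\psi(y)/\phi(d)\bigr|$ and then appeal to \cite{BFI} and \cite{Fo}; but no bound on this absolute-value average is known for moduli past the Bombieri--Vinogradov level---the theorems of Fouvry and of Bombieri--Friedlander--Iwaniec control only the \emph{signed} sum over $d$. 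You must therefore keep the $d$-sum signed there and deal with the $d$-dependent endpoints $\min(x,a+Qd)$ and $\min(x,a+2Qd)$, e.g.\ by splitting $d$ into $O(\log x)$ short ranges on which the endpoints are essentially constant (or by partial summation) before applying the signed estimates; this is precisely how \cite{FG} and \cite{FGHM} proceed. With that repair your proof is the standard one, and it is also the same divisor-switching mechanism the present paper reuses in Lemmas \ref{lemma big values of q} and \ref{lemma average of psi}.
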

\noindent 
These results are based on the dispersion method and deep estimates on sums of Kloosterman sums \cite{DI}, and generalize to other arithmetic sequences such as friable integers in arithmetic progressions \cite{FT,Dr}. 

In \cite{Fi}, the author showed that in some cases it is possible to obtain an asymptotic formula for the quantity on the left hand side of \eqref{eq:FG}.
  
\begin{theorem}[{\cite[Theorem 1.1]{Fi}}]
\label{theorem Fi}
Fix an integer $a\neq 0$, a positive real number $B$ and $\epsilon>0$. Then, for $M=M(x) \leq (\log x)^B$, one has
\begin{equation}
\label{enonce premier thm}
\frac1{\frac{\phi(a)}{a}\frac{x}{M}  }\sum_{\substack{q\leq \frac{x} {M}  \\(q,a)=1}} \left( \psi(x;q,a)-\Lambda(a)-\frac{\psi(x)}{\phi(q)}\right) = \mu(a,M)+O_{a,\epsilon,B} \left(\frac{1}{M^{\frac {205}{538}-\epsilon}}\right)
\end{equation}
with 
\begin{equation}
\mu(a,M):=  \begin{cases}
-\frac 12 \log M - C_0 &\text{ if } a=\pm 1\\
			-\frac 12 \log p &\text{ if } a=\pm p^e \\
			 0 &\text{ otherwise,}
                    \end{cases}
                    \label{equation definition of mu}
\end{equation} 
where 
\begin{equation*} C_0:= \frac 12 \left(\log 2\pi + \gamma +\sum_p \frac {\log p}{p(p-1)}+1 \right).
\end{equation*}
\end{theorem}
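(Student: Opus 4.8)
\medskip
\noindent The plan is to expand $\psi$, sum over $q$, and reduce everything to a Titchmarsh-divisor-type sum together with elementary pieces. First I would write $\psi(x;q,a)=\sum_{n\le x,\ n\equiv a\,(q)}\Lambda(n)$, sum over $q\le Q:=x/M$ with $(q,a)=1$, and interchange the order of summation. The contribution of $n=a$ is $\Lambda(a)\#\{q\le Q:(q,a)=1\}$, which cancels $\sum_{q\le Q,(q,a)=1}\Lambda(a)$ exactly, while the terms with $(n,a)>1$ (prime powers $p^{j}$ with $p\mid a$) contribute only $O_{a}(x^{\epsilon})$. What survives is $\sum_{a<n\le x}\Lambda(n)\,d(n-a;\le Q)$, where $d(m;\le Q):=\#\{q\mid m:q\le Q\}$, minus the trivial term $\psi(x)\sum_{q\le Q,(q,a)=1}1/\phi(q)$. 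I would then use the divisor switch $d(m;\le Q)=d(m)-\#\{q\mid m:q>Q\}$, replacing each divisor $q>Q$ of $m=n-a$ by its complement $r=m/q$, which satisfies $r<M$ since $m\le x$ and $Q=x/M$. This splits the main sum into the full Titchmarsh divisor sum $\mathcal T:=\sum_{a<n\le x}\Lambda(n)d(n-a)$ and a short sum $T:=\sum_{r<M}\bigl(\psi(x;r,a)-\psi(a+rQ;r,a)\bigr)$ running over at most $M\le(\log x)^{B}$ values of $r$ (and with $(r,a)=1$ automatic, up to $O_{a}(x^{\epsilon})$). In all, writing $\Sigma$ for the sum on the left of \eqref{enonce premier thm} without the normalisation, $\Sigma=\mathcal T-T-\psi(x)\sum_{q\le Q,(q,a)=1}1/\phi(q)+O_{a}(x^{\epsilon})$.

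The two elementary pieces come next. By the Siegel--Walfisz theorem, uniform for moduli up to $(\log x)^{B}$, one has $\psi(y;r,a)=y/\phi(r)+O_{A}(y(\log y)^{-A})$, so $T$ reduces to $\sum_{r<M,(r,a)=1}\bigl((x-a)-rQ\bigr)/\phi(r)$ up to a negligible error, and likewise $\psi(x)\sum_{q\le Q,(q,a)=1}1/\phi(q)=x\sum_{q\le Q,(q,a)=1}1/\phi(q)$ up to a negligible error. I would evaluate the resulting arithmetic sums $\sum_{r\le R,(r,a)=1}1/\phi(r)$ and $\sum_{r\le R,(r,a)=1}r/\phi(r)$ using $1/\phi(r)=r^{-1}\sum_{d\mid r}\mu^{2}(d)/\phi(d)$ together with Euler--Maclaurin for $\sum_{m\le Y,(m,a)=1}1/m$, retaining secondary terms down to size $x/M$. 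The structural point is that the lower-order coefficients of $\sum_{m\le Y,(m,a)=1}1/m$ carry the factor $\sum_{\delta\mid a}\mu(\delta)$, which is $1$ for $a=\pm1$ and $0$ otherwise; this is exactly why the $-\tfrac12\log M$ term appears only for $a=\pm1$, and the residual arithmetic constants in the prime-power and the generic case then produce the stated trichotomy for $\mu(a,M)$.

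The heart of the argument is $\mathcal T$. Dirichlet's hyperbola method gives $\mathcal T=2\sum_{d\le\sqrt{x-a},\,(d,a)=1}\bigl(\psi(x;d,a)-\psi(a+d^{2};d,a)\bigr)+O(\sqrt x\log x)$. Inserting $\psi(y;d,a)=y/\phi(d)+\Delta(y;d,a)$ yields an elementary main term $2\sum_{d\le\sqrt{x-a},(d,a)=1}\bigl((x-a)-d^{2}\bigr)/\phi(d)$, of the shape $C(a)x\log x+C'(a)x+O(x^{1/2+\epsilon})$, together with the two remainder sums $\sum_{d\le\sqrt x,(d,a)=1}\Delta(x;d,a)$ and $\sum_{d\le\sqrt x,(d,a)=1}\Delta(a+d^{2};d,a)$. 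The first is precisely of the type treated by Bombieri, Friedlander and Iwaniec \cite{BFI} via the dispersion method and the Deshouillers--Iwaniec estimates for sums of Kloosterman sums \cite{DI}. The second is genuinely harder: here the modulus $d$ sits at the square root of the evaluation point $a+d^{2}$, so there is no pointwise main-term saving and one must again appeal to averages of Kloosterman sums. I expect this last sum to be the principal obstacle, and it is the strength available there that produces the exponent $205/538$.

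Finally I would combine everything. The Friedlander--Granville bound (Theorem~\ref{theorem FG}) already shows $\Sigma\ll 2^{\omega(a)}(x/M)\log M=o(x)$, so the $x\log x$-- and $x$--scale contributions of $\mathcal T$, of $T$, and of $\psi(x)\sum_{q\le Q,(q,a)=1}1/\phi(q)$ are forced to cancel identically; this provides a convenient consistency check on the lengthy but routine bookkeeping. Collecting the surviving $x/M$--scale terms then gives $\frac{\phi(a)}{a}\frac{x}{M}\mu(a,M)$ with $\mu(a,M)$ as in \eqref{equation definition of mu}, and in particular identifies the constant $C_{0}$; the error is dominated by the Kloosterman-sum input from $\mathcal T$, giving, after dividing by $\frac{\phi(a)}{a}\frac{x}{M}$, the bound $O_{a,\epsilon,B}(M^{-205/538+\epsilon})$.
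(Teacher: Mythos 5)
This theorem is imported verbatim from \cite{Fi} and is not reproved in the present paper, so your sketch has to be measured against the argument of that reference. Your architecture is the right one and does match it: divisor switching turns $\sum_{q\le x/M}\psi(x;q,a)$ into the full Titchmarsh divisor sum minus a complementary sum over short moduli $s<M$; the former is handled by the dispersion method of \cite{Fo,BFI} with the Kloosterman-sum input of \cite{DI}, the latter by Siegel--Walfisz; and $\mu(a,M)$ emerges from secondary terms of sums of $1/\phi$. But you have misplaced the bottleneck. The dispersion method yields an error $O_A\big(x/(\log x)^A\big)$ for every $A$, which after dividing by $\tfrac{\phi(a)}{a}\tfrac xM$ is $O((\log x)^{B-A})$, hence $O(M^{-K})$ for every $K$ since $M\le(\log x)^B$: the Kloosterman input is invisible at the scale $M^{-205/538}$. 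As the remark immediately following the statement says, the exponent $\tfrac{205}{538}$ comes from Huxley's subconvexity bound $\zeta(\tfrac12+it)\ll_\epsilon t^{32/205+\epsilon}$ \cite{Hu} (which is why it improves to $\tfrac{171}{448}$ with Bourgain's bound \cite{Bo}), and it enters exactly in the piece you wave off as elementary: the evaluation of $\sum_{s\le M,\,(s,a)=1}\phi(s)^{-1}(1-s/M)$ to relative precision $M^{-1-205/538+\epsilon}$, i.e.\ \cite[Lemma 5.9]{Fi}, carried out by Perron's formula with the contour pushed to where $\zeta(s+1)$ sits on the critical line. Euler--Maclaurin applied to $\sum_{m\le M/d}1/m$ and summed against $\mu^2(d)/\phi(d)$ leaves fluctuating remainders of the same order as the secondary term itself; controlling them is precisely what requires subconvexity, so your plan does not reach the stated error exponent.

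The second gap is the mechanism you give for the trichotomy in \eqref{equation definition of mu}. A coefficient of the form $\sum_{\delta\mid a}\mu(\delta)$ equals $1$ for $a=\pm1$ and $0$ for every other $a$, so it would force $\mu(a,M)=0$ in the prime-power case and cannot produce the $-\tfrac12\log p$ there. What actually happens is this: the Dirichlet series $\sum_{(n,a)=1}\phi(n)^{-1}n^{-s}$ factors as $\zeta(s+1)\zeta(s+2)H(s)\prod_{p\mid a}(1-p^{-s-1})\big(1+\tfrac1{(p-1)p^{s+1}}\big)^{-1}$ with $H$ holomorphic and nonzero near $s=-1$, and the $1/M$-scale secondary term is the residue at $s=-1$, where $\zeta(s+2)$ has a simple pole and $\prod_{p\mid a}(1-p^{-s-1})$ a zero of order $\omega(a)$. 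For $a=\pm1$ the Perron kernel makes this a double pole and one gets $\zeta(0)\big(\log M+\tfrac{\zeta'}{\zeta}(0)+\gamma+\cdots\big)=-\tfrac12\log M-C_0$ (this is where $\log 2\pi$ and $\sum_p\tfrac{\log p}{p(p-1)}$ come from); for $a=\pm p^e$ the pole and the zero cancel, leaving $\zeta(0)\lim_{s\to-1}\zeta(s+2)(1-p^{-s-1})=-\tfrac12\log p$; for $\omega(a)\ge2$ the residue vanishes. Without this computation you do not recover \eqref{equation definition of mu}. The rest of your sketch (the hyperbola treatment of $\mathcal T$, the cancellation of the $x\log x$ and $x$ scales against $\psi(x)\sum_{q}1/\phi(q)$, the consistency check via Theorem~\ref{theorem FG}) is sound.
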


\begin{remark}
The exponent $205/538$ in Theorem \ref{theorem Fi}, which comes from Huxley's subconvexity estimate \cite{Hu}, can be improved to $171/448$ using Bourgain's recent work \cite{Bo}.
\end{remark}

\begin{remark}
In Theorem \ref{theorem Fi} we have excluded the first term $n=a$ of the arithmetic progression $a \bmod q$; we will keep doing so and use the notation
$$ \psi^*(x;q,a) := \sum_{\substack{ n\leq x \\ n\equiv a \bmod q \\ n> a}} \Lambda(n).  $$
The reason we do this is because the term $\Lambda(a)$ can have a significant contribution in this context, and this contribution is trivial to control. 
\end{remark}

One can interpret Theorem \ref{theorem Fi} by saying that the discrepancy of the distribution of primes in the different arithmetic progressions $a \bmod q$ (with $(a,q)=1$) is negative for $a$ having at most one prime factor, and is zero otherwise. One could ask whether there exists an approximation to $\psi(x;q,a)$, superior to $\psi(x)/\phi(q)$, which has the same discrepancies as   $\psi(x;q,a)$. In the present paper we will show that Vaughan's approximation has this property.

Vaughan introduced the following approximation
to $\psi(x;q,a)$, which depends on a parameter $R\geq 1$:
$$ \rho_R(x;q,a):= \sum_{\substack{ n\leq x \\ n\equiv a \bmod q  }} F_R(n),$$ where
$$ F_R(n):=\sum_{r\leq R} \frac{\mu(r)}{\phi(r)} \sum_{\substack{ 1\leq b\leq r \\ (b,r)=1}} e(bn/r)= \sum_{r\leq R} \frac{\mu^2(r)\mu((r,n))\phi((r,n))}{\phi(r)}. $$
The function $F_R(n)$ was motivated by the Hardy-Littlewood method, in order to remove the contribution of the major arcs.
Remarkably, Vaughan showed that the second \cite[Corollary 4.1]{V1} and third \cite[Theorem 8]{V2} moments of $\psi(x;q,a)-\rho_R(x;q,a)$, averaged over $q\leq x/M$ with $M,R\leq (\log x)^A$, are smaller than those of $\psi(x;q,a)-\psi(x)/\phi(q)$ when $R$ is larger than $M$ (and the implied error terms are sharper than \cite[Theorem 1.1]{GV} and \cite[Theorems 1,2]{Ho8}).

Our first result shows that Vaughan's approximation has the properties described earlier, that is it captures the discrepancies of $\psi(x,q;a)$ in the arithmetic progressions $a \bmod q$ observed in Theorem \ref{theorem Fi}.
As we did with $\psi(x;q,a)$ above, we exclude the first term of the arithmetic progression $a \bmod q$:
\begin{equation}
\rho_R^*(x;q,a):= \sum_{\substack{ n\leq x \\ n\equiv a \bmod q \\ n > a }} F_R(n).
\label{equation definition rho*}
\end{equation} 
In what follows, $R$ should be thought as a fixed power of $\log x$, however it can be even smaller when looking at moduli $q$ very close to $x$.

\begin{theorem}
\label{theorem main dyadique}
Fix $A,B \geq 1$. 

(i) Uniformly for $0<|a|\leq x/(\log x)^{A+B+1}$, $1\leq M\leq (\log x)^A$ and $2M \leq R \leq x^{\frac 12}$ we have
\begin{equation} \label{equation main dyadique} \frac 1{x/2M}\sum_{\substack{ \frac x{2M}<q\leq \frac xM }} \left( \psi^*(x;q,a)-\rho_R^*(x;q,a) \right) \ll_{A,B} \frac{1}{(\log x)^B}. \end{equation}

(ii) If in addition $2|a|M \leq R$, then restricting the sum over moduli coprime to $a$,
\begin{equation} \label{equation main dyadique coprime} \frac 1{\frac{\phi(a)}a \frac x{2M}}\sum_{\substack{ \frac x{2M}<q\leq \frac xM \\ (q,a)=1 }} \left( \psi^*(x;q,a)-\rho_R^*(x;q,a) \right) \ll_{A,B} \frac{1}{(\log x)^B}. \end{equation}

\end{theorem}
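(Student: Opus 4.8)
The plan is to open up $F_R(n)$ using the multiplicative formula $F_R(n)=\sum_{r\le R}\frac{\mu(r)}{\phi(r)}c_r(n)$, where $c_r(n)=\sum_{(b,r)=1}e(bn/r)$ is the Ramanujan sum, and to exchange the order of summation so that the sum over $q$ is brought inside. Summing $\sum_{x/2M<q\le x/M}\rho_R^*(x;q,a)$ amounts to counting, for each $r\le R$ and each residue $n\equiv a\pmod q$ with $n\le x$, the weight $F_R(n)$; since $r\le R\le x^{1/2}$ and $q$ ranges near $x/M\gg x/(\log x)^A$, each admissible $n$ is determined very rigidly, and the inner count over $q$ can be evaluated by a standard hyperbola-type manipulation. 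In parallel, $\sum_{x/2M<q\le x/M}\psi^*(x;q,a)=\sum_{n\le x}\Lambda(n)\,\#\{q\in(x/2M,x/M]: q\mid n-a,\ n>a\}$, and for $n$ in this range $n-a$ has at most one divisor in the dyadic window $(x/2M,x/M]$ (because the window has multiplicative length $2$ and $n-a\le x$ forces any such divisor $q$ to satisfy $(n-a)/q< 2M\le 2(\log x)^A$, so $q$ is essentially $(n-a)/d$ for a small $d$). Thus both sides reduce to sums over small auxiliary divisors $d\le 2M$ of expressions of the shape $\sum_{\substack{n\le x,\ n\equiv a\,(d)}}(\Lambda(n)-F_R(n))$ or closely related variants, plus lower-order terms.

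The heart of the matter is then to show that for each fixed small $d$,
\begin{equation}
\sum_{\substack{n\le x\\ n\equiv a\ (\mathrm{mod}\ d)\\ n>a}}\bigl(\Lambda(n)-F_R(n)\bigr)\ll_{A,B}\frac{x/d}{(\log x)^{B'}}
\end{equation}
with $B'$ large, and to combine the $O(M)=O((\log x)^A)$ such estimates. The bound $\sum_{n\le x,\,n\equiv a\,(d)}F_R(n)$ should itself be evaluated: the $r=1$ term contributes $\sim x/d$ (matching the main term of $\psi$), and for $r>1$ one uses $\sum_{n\le x,\,n\equiv a\,(d)}c_r(n)$, which after separating the common divisor $(d,r)$ reduces to Ramanujan sums of the residue of $a$, giving cancellation. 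Comparing with $\psi(x;d,a)\sim \frac{x}{\phi(d)}$ when $(a,d)=1$ (which requires the Siegel--Walfisz theorem, valid since $d\le(\log x)^A$) and with the full $\psi$ when $(a,d)>1$, one matches main terms; the secondary terms from $F_R$ with $r>1$ are where the saving $R\ge 2M$ enters, exactly as in Vaughan's second- and third-moment computations. Part (ii) is the same computation but restricting $q$ to be coprime to $a$; the condition $2|a|M\le R$ is what guarantees that the extra Ramanujan-sum factors coming from the coprimality restriction are genuinely captured by the truncation at $R$ rather than appearing as an error.

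I expect the main obstacle to be the uniformity in $a$: since $|a|$ is allowed up to $x/(\log x)^{A+B+1}$, the ``first term excluded'' device and the range $n>a$ must be handled carefully, and one cannot treat $a$ as a bounded constant as in Theorem~\ref{theorem Fi}. Concretely, when $|a|$ is a large power of $\log x$ (or larger), the count of $q\in(x/2M,x/M]$ with $q\mid n-a$ can be empty or erratic for many $n$, and one needs the divisor-switching to small $d$ to be genuinely uniform; moreover the error terms in evaluating $\sum_{n\le x,\,n\equiv a\,(d)}c_r(n)$ must be summed over $r\le R$ and $d\le 2M$ without losing the factor $x/(\log x)^B$. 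A secondary technical point is making sure the Siegel--Walfisz input is applied only to moduli $d\ll(\log x)^{A}$ (which holds) rather than to the original $q\asymp x/M$ (for which it would fail), so the reduction to small $d$ is not merely a convenience but essential. Once these uniformity issues are controlled, assembling \eqref{equation main dyadique} and \eqref{equation main dyadique coprime} is routine bookkeeping.
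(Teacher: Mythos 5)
Your plan is essentially the paper's proof: switch from the moduli $q\in(x/2M,x/M]$ to the complementary divisors $s=(n-a)/q<2M$, evaluate $\sum_{n\equiv a\ (s)}F_R(n)$ via Vaughan's Ramanujan-sum lemma (where the hypothesis $R\ge 2M$, resp.\ $R\ge 2|a|M$ after the M\"obius inversion over $d\mid a$ needed for the coprimality condition in part (ii), makes the sum over $r$ dividing the small modulus complete and hence equal to $\delta_{(s,a)=1}\,s/\phi(s)$ by multiplicativity), and match this against Siegel--Walfisz for $\psi^*$ so that the main terms cancel identically. The only slips are harmless: your claim that $n-a$ has at most one divisor in the dyadic window is false but also unneeded, since the divisor switch is a reparametrization rather than a uniqueness statement, and the assertion that the $r=1$ term alone matches the main term of $\psi$ should read that the full sum over $r\mid s$ builds up $x/\phi(s)$.
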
 
Comparing with (a dyadic version of) Theorem \ref{theorem Fi}, we deduce that $\rho_R^*(x;q,a)$ is a much better approximation to $\psi^*(x;q,a)$ than $\psi(x)/\phi(q)$, on average over $q\asymp x/M$. Indeed, for $M\rightarrow \infty$, the right hand sides of \eqref{equation main dyadique} and \eqref{equation main dyadique coprime} are $\ll_K M^{-K}$ for any $K\geq 1$, and are independent of both $a$ and $R$. They are also much smaller than \eqref{enonce premier thm} for fixed values of $M$.

Let us briefly explain why it is possible to obtain such an error term in Theorem \ref{theorem main dyadique}. In Theorem \ref{theorem Fi}, the error term comes from the cancellation of main terms in sums of a certain multiplicative function. In the corresponding situation for Theorem \ref{theorem main dyadique}, we have cancellation of the whole sums of the implied multiplicative function, rather than just the main terms (see Lemmas \ref{lemma big values of q} (ii) and \ref{lemma average of psi}).

\begin{remark}
In Theorem \ref{theorem main dyadique} (i), we sum over all moduli $q$, not just those coprime to $a$. The reason we do this is that when $(q,a)>1$, both $\psi^*(x;q,a)$ and $\rho_R^*(x;q,a)$ are small.
Note however that (ii) is not a direct consequence of (i), since contrary to $\psi^*(x;q,a)$, it is not trivial to handle $\rho_R^*(x;q,a)$ when $(q,a)>1$ (see Section \ref{section not coprime} for more details).
\end{remark}

Things are quite different when averaging over the whole range $q\leq x/M$. Indeed in this case we obtain non-negligible lower-order terms. This result seems to indicate that Vaughan's approximation is better for larger values of $q$ than for more moderate ones.

\begin{theorem}
\label{theorem main full range}
Fix $A,B \geq 1$, and $a\neq 0$. 

(i) Uniformly for $M\leq (\log x)^A$ and $1\leq M \leq R \leq x^{\frac 12}/(\log x)^{A+B}$ we have
\begin{multline}
\frac 1{ x/M}\sum_{\substack{ q\leq \frac xM }} \left( \psi^*(x;q,a)-\rho_R^*(x;q,a) \right) =  \epsilon_{a=\pm 1}  \frac{M}R\left(\log \frac x{R^2} +2\gamma-3\right)  \\
+O_{a}\bigg(\frac{M\log x}{R^{\frac 32} \exp \big(c \frac{(\log R)^{\frac 35}}{(\log\log R)^{\frac 15}} \big)} \bigg) +O_{a,A,B}\left(  \frac{1}{(\log x)^B} \right),
\label{equation full range} 
\end{multline}
where $\epsilon_{a=\pm 1}$ equals $1$ if $a=\pm 1$, and is zero otherwise.

(ii) Under the additional condition $|a|M \leq R$, we have that
\begin{multline}
\frac 1{\frac{\phi(|a|)}{|a|} \frac xM}\sum_{\substack{ q\leq \frac xM \\ (q,a)=1 }} \left( \psi^*(x;q,a)-\rho_R^*(x;q,a) \right) =    \frac{\phi(|a|)}{|a|} \frac{M}R\Big(\log \frac x{R^2} +2\gamma-3+\sum_{p\mid a} \frac{p+1}{p-1}\log p\Big)  \\
+O_{a}\bigg(\frac{M\log x}{R^{\frac 32} \exp \big(c \frac{(\log R)^{\frac 35}}{(\log\log R)^{\frac 15}} \big)} \bigg) +O_{a,A,B}\left(  \frac{1}{(\log x)^B} \right).
\label{equation full range coprime} 
\end{multline}
In both of these statements, $c$ is a positive absolute constant.
\end{theorem}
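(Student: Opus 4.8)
The plan is to reduce both parts of Theorem \ref{theorem main full range} to an analysis of the sum $\sum_{q\le x/M}\big(\psi^*(x;q,a)-\rho_R^*(x;q,a)\big)$ by interchanging the order of summation. For the $\psi^*$ term we write $\sum_{q\le x/M}\psi^*(x;q,a)=\sum_{a<n\le x}\Lambda(n)\#\{q\le x/M: q\mid n-a\}$, and the count of divisors $q$ of $n-a$ up to $x/M$ introduces the arithmetic of $n-a$; after a truncation of $q$ this is essentially $\sum_{a<n\le x}\Lambda(n)\sum_{d\mid n-a,\, d\le x/M}1$, a Titchmarsh-divisor-type sum. For the $\rho_R^*$ term, using $F_R(n)=\sum_{r\le R}\tfrac{\mu(r)}{\phi(r)}c_r(n)$ with $c_r$ the Ramanujan sum, one gets $\sum_{q\le x/M}\rho_R^*(x;q,a)=\sum_{r\le R}\tfrac{\mu(r)}{\phi(r)}\sum_{a<n\le x}c_r(n)\#\{q\le x/M: q\mid n-a\}$, and the inner arithmetic is governed by the multiplicative function $q\mapsto \mu^2(q)\,$-type weights appearing in $F_R$. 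The difference of the two should, after the main terms are isolated, be expressible via a multiplicative function whose Dirichlet series has an Euler product that can be analyzed by a Selberg–Delange / Perron argument; the secondary term $\tfrac MR(\log\frac{x}{R^2}+2\gamma-3)$ is exactly the residue contribution one expects from a pole-type term in that contour integral, and the $R^{-3/2}\exp(-c(\log R)^{3/5}\cdots)$ error is the Vinogradov–Korobov zero-free-region contribution coming from a factor of $1/\zeta$ (or from the prime number theorem applied to $\psi(x)$).

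Concretely, I would proceed as follows. First, handle the trivial first term $n=a$ and the tail $|a|$ close to $x$ is already excluded by hypothesis since $M\le(\log x)^A$; reduce to $n$ with $n-a$ ranging over $[1-a,x-a]$. Second, apply the earlier structural lemmas referenced in the excerpt (Lemmas \ref{lemma big values of q} and \ref{lemma average of psi}) which, as the author indicates, give cancellation of the \emph{whole} sums of the relevant multiplicative function for $q$ large — these are precisely what makes the dyadic Theorem \ref{theorem main dyadique} have a power-saving error, and here they will produce the $O((\log x)^{-B})$ term after summing dyadically over $q\in(x/2M',x/M']$ for $M'\le M$ and being left with a genuine main contribution only from the bottom-most range $q\le$ (roughly) $R$-dependent cutoff. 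Third, for the remaining small-$q$ range, compute $\sum_{q}\psi^*(x;q,a)$ by the Bombieri–Vinogradov-free elementary route: $\sum_{q\le y}\psi^*(x;q,a)=\sum_{d\le y}(\psi(x;d\cdot\ast)\ \text{rearranged})$; more cleanly, swap to $\sum_{a<n\le x}\Lambda(n)\tau_{\le y}(n-a)$ and evaluate using the hyperbola method together with the prime number theorem in the form $\psi(x)=x+O(x\exp(-c(\log x)^{3/5}(\log\log x)^{-1/5}))$, which is the origin of the stated error shape. Fourth, do the analogous computation for $\sum_q \rho_R^*(x;q,a)$ using the Ramanujan-sum expansion and the identity $\sum_{n\le X}c_r(n)=$ a main term plus $O(r)$, leading after the $r\le R$ summation against $\mu(r)/\phi(r)$ to main terms matching those from the $\psi^*$ side up to the discrepancy $\tfrac MR(\log\frac x{R^2}+2\gamma-3+\cdots)$.

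The coprimality version (ii) requires the extra input, flagged in the remark, that $\rho_R^*(x;q,a)$ is not small or easy when $(q,a)>1$. The plan there is to use the condition $|a|M\le R$ so that the range of $r$ in $F_R$ is wide enough to ``see'' all prime factors of $a$: one inserts the characteristic function of $(q,a)=1$ via $\sum_{e\mid(q,a)}\mu(e)$, giving $\sum_{e\mid a}\mu(e)\sum_{q\le x/M,\, e\mid q}(\psi^*-\rho_R^*)$, and then reduces each inner sum to a version of the computation in (i) with $x/M$ replaced by $x/(Me)$ and $a$ by $a/(e,a)$-type parameters; the local factors at primes $p\mid a$ accumulate to the Euler product $\tfrac{\phi(|a|)}{|a|}\big(\cdots+\sum_{p\mid a}\tfrac{p+1}{p-1}\log p\big)$. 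The main obstacle I anticipate is precisely this bookkeeping of the local factors at $p\mid a$ together with ensuring the error terms remain uniform in $a$ (the $O_a$ constants) while the hypotheses only give $|a|M\le R\le x^{1/2}/(\log x)^{A+B}$ — one must check that the truncation errors in the divisor sums and the $O(r)$ errors in the Ramanujan-sum averages, summed over $r\le R$ and over $e\mid a$, stay below the claimed $R^{-3/2}\exp(-c(\log R)^{3/5}\cdots)$ and $(\log x)^{-B}$ thresholds. A secondary delicate point is correctly extracting the constant $2\gamma-3$: this comes from differentiating/expanding $\sum_{d\le y}\tfrac1d=\log y+\gamma+O(1/y)$ and $\sum_{d\le y}\tfrac{\log d}{d}$ inside the hyperbola-method evaluation of $\sum\Lambda(n)\tau_{\le y}(n-a)$, and getting the constant exactly right (rather than up to $O(1)$) is where most of the careful computation lies.
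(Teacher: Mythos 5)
There is a genuine gap on the $\psi^*$ side. Your plan evaluates $\sum_{q\leq x/M}\psi^*(x;q,a)$ by swapping to $\sum_{a<n\leq x}\Lambda(n)\,\tau_{\leq x/M}(n-a)$ and then using ``the hyperbola method together with the prime number theorem'' via what you call a Bombieri--Vinogradov-free elementary route. This cannot reach the precision $x/(\log x)^B$ required by \eqref{equation full range}: after the hyperbola method you need $\sum_{d\leq \sqrt{x}}\psi(x;d,a)$ with all terms of size $x$ (including the constant $C_1(a)+2C_2(a)$ times $x$) evaluated correctly, and even Bombieri--Vinogradov leaves the range $x^{1/2}/(\log x)^C<d\leq x^{1/2}$ uncontrolled, which alone contributes $\asymp x\log\log x$. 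The paper does not attempt this elementarily: it imports \cite[Proposition 6.1]{Fi}, which rests on the dispersion method and the Deshouillers--Iwaniec bounds for sums of Kloosterman sums through \cite{Fo,BFI,FG,FGHM} (the footnote to Theorem \ref{theorem main full range} makes this dependence explicit, since taking $M=1$ recovers the Titchmarsh divisor asymptotic). Relatedly, your idea of summing the dyadic Theorem \ref{theorem main dyadique} over ranges $q\in(x/2M',x/M']$ only covers $q>x/(\log x)^A$, because the modulus-swapping plus Siegel--Walfisz argument behind it requires the codivisor range $N\leq(\log x)^{O(1)}$; the bulk range $q\leq x/(\log x)^A$ is exactly where the deep input is unavoidable.

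The $\rho_R^*$ side of your plan is essentially the paper's: expand $F_R$ in Ramanujan sums, swap divisors, and reduce to averages of the multiplicative function $\mu^2(r)\mu((r,a))\phi((r,a))/(r\phi(r))$ (Lemmas \ref{lemma big values of q}, \ref{lemma average of rho q<x}, \ref{lemma sum over r with gcd}), and your identification of $\frac MR(\log\frac{x}{R^2}+2\gamma-3)$ as the tail of that $r$-sum is correct in spirit. Two smaller inaccuracies: the error $R^{-3/2}\exp(-c(\log R)^{3/5}(\log\log R)^{-1/5})$ comes from the Korobov--Vinogradov bound on the Mertens function applied to the squarefree count $\sum_{n\leq R}\mu^2(n)$ (Lemma \ref{lemma squarefree}), not from the prime number theorem for $\psi(x)$ (the variable is $R$, not $x$); and in part (ii) the paper inserts the M\"obius factor on the codivisor, i.e.\ on the condition $((n-a)/s,a)=1$ after the swap (Lemma \ref{lemma average of rho coprime}), rather than on $q$ itself, which is what makes the condition $|a|M\leq R$ force the inner $r$-sum to be complete and hence multiplicatively evaluable.
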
 

\begin{remark}
Fixing $a\notin \{0,\pm1\}$ and comparing \eqref{equation full range} and \eqref{equation full range coprime}, we see that contrary to the situation in Theorem \ref{theorem main dyadique}, $\rho_R^*(x;q,a)$ has a non-trivial contribution when $(q,a)>1$. This indicates once more that Vaughan's approximation is more precise for larger values of $q$. We will expand on this remark in Section \ref{section not coprime}. 
\end{remark}

\begin{remark}
Taking $M=1$ in Theorem \ref{theorem main full range} (i)\footnote{Note that this theorem itself is based on the results of \cite{Fo, BFI}.} and applying Lemmas \ref{lemma average of rho q<x} and \ref{lemma sum over r with gcd} we recover the known estimate for the Titchmarsh divisor problem \cite{Fo, BFI}. Drappeau recently established \cite{Dr2} that the error term in this problem depends on the existence of Landau-Siegel zeros.
\end{remark}

Comparing Theorems \ref{theorem Fi} and \ref{theorem main full range}, we see that $\rho_R^*(x;q,a)$ necessarily has the same discrepancies in arithmetic progressions as $\psi^*(x;q,a)$, when averaged over $q\leq x/M$ with $M\leq (\log x)^{O(1)}$. We will show that these discrepancies persist for $M$ as large as $x^{\frac 12-\epsilon}/R$, as long as $M\leq R$.

\begin{proposition}
\label{proposition discrepancies rho}
Fix $\epsilon>0$ and $a\neq 0$.
Uniformly for $1\leq |a|M\leq R \leq x^{\frac 12}$ we have
\begin{multline}
\frac 1{\frac{\phi(|a|)}{|a|}\frac xM}\sum_{\substack{ q\leq \frac xM \\ (q,a)=1 }} \left( \rho_R^*(x;q,a)-\frac x{\phi(q)} \right)= \mu(a,M) \\-\frac{\phi(|a|)}{|a|} \frac{M}R\Big(\log \frac x{R^2} +2\gamma-3+\sum_{p\mid a} \frac{p+1}{p-1}\log p\Big) +O_{a}\bigg( \frac {1}{M^{\frac{171}{448}-\epsilon}}+\frac {M\log x}{R^{\frac 32} \exp \big(c \frac{(\log R)^{\frac 35}}{(\log\log R)^{\frac 15}} \big)} +\frac{RM}{x^{\frac 12}}\bigg),
\label{equation proposition discrepancy}
\end{multline}
where $\mu(a,M)$ is defined in \eqref{equation definition of mu}.
\end{proposition}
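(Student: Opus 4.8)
The plan is to obtain the statement by combining the two main theorems already established. Write the quantity to be estimated as a telescoping difference: for each $q \le x/M$ with $(q,a)=1$,
\[
\rho_R^*(x;q,a) - \frac{x}{\phi(q)} = \Big(\rho_R^*(x;q,a) - \psi^*(x;q,a)\Big) + \Big(\psi^*(x;q,a) - \frac{\psi(x)}{\phi(q)}\Big) + \frac{\psi(x)-x}{\phi(q)}.
\]
Summing over $q$ and dividing by $\frac{\phi(|a|)}{|a|}\frac{x}{M}$, the first bracket is handled by Theorem \ref{theorem main full range} (ii) (applicable since we are in the regime $1 \le |a|M \le R \le x^{1/2}$, though I will need to check the upper bound on $R$ matches: Theorem \ref{theorem main full range} requires $R \le x^{1/2}/(\log x)^{A+B}$, so for the boundary range $x^{1/2}/(\log x)^{A+B} < R \le x^{1/2}$ a separate, cruder argument using the trivial bound $\psi^*(x;q,a)-\rho_R^*(x;q,a) \ll (x/q)\log x$ together with $RM/x^{1/2}$ absorbing the loss will be needed — this accounts for the extra $RM/x^{1/2}$ error term appearing in \eqref{equation proposition discrepancy} that is absent from Theorem \ref{theorem main full range}). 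This contributes, with a sign flip, the explicit term $-\frac{\phi(|a|)}{|a|}\frac{M}{R}\big(\log\frac{x}{R^2}+2\gamma-3+\sum_{p\mid a}\frac{p+1}{p-1}\log p\big)$ plus the $M\log x / R^{3/2}\exp(\cdots)$ error. The second bracket is, after dividing by the same normalization, exactly the left-hand side of a dyadic-free version of Theorem \ref{theorem Fi}: one sums the dyadic estimate \eqref{enonce premier thm} over $q\le x/M$ (or rather applies the non-dyadic form directly, which is what \cite{Fi} proves), producing the main term $\mu(a,M)$ together with the error $O_a(M^{-171/448+\epsilon})$ (using the Bourgain-improved exponent noted in the remark). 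Note Theorem \ref{theorem Fi} is stated with $\Lambda(a)$ subtracted and with $\psi^*$, matching our normalization, and is valid for $M \le (\log x)^B$ — but here $M$ may be as large as $x^{1/2}/|a|$, so again the large-$M$ range is not covered by \cite{Fi} as stated; I expect one must either quote a stronger form of that result valid for $M$ up to a power of $x$, or re-derive it in that range using the same input (Vinogradov–Korobov and subconvexity), with the $M^{-171/448+\epsilon}$ term remaining the dominant error. Finally, the third bracket contributes $\frac{\psi(x)-x}{\phi(|a|)/|a| \cdot x/M}\sum_{q\le x/M,\,(q,a)=1}\frac{1}{\phi(q)}$; since $\psi(x)-x \ll x\exp(-c\sqrt{\log x})$ by the prime number theorem and $\sum_{q\le y}1/\phi(q) \ll \log y$, this is $\ll (M/x)\cdot x\exp(-c\sqrt{\log x})\cdot\log x \ll M\exp(-c'\sqrt{\log x})$, which is absorbed into $O_a((\log x)^{-B})$ provided $M$ is polynomially bounded — and when $M$ is a larger power of $x$ this term is instead dominated by $M\log x/R^{3/2}\exp(\cdots)$ after noting $R \le x^{1/2}$.

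The assembly of main terms is then immediate: $\mu(a,M)$ comes from the second bracket, the explicit $\frac{M}{R}(\cdots)$ term (with its minus sign) from the first, and all errors combine into the stated $O_a\big(M^{-171/448+\epsilon} + M\log x\,R^{-3/2}\exp(\cdots) + RM/x^{1/2}\big)$.

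The main obstacle, as flagged above, is the range of $M$: Proposition \ref{proposition discrepancies rho} asserts uniformity for $|a|M \le R \le x^{1/2}$, so $M$ can be as large as $x^{1/2}/|a|$, far beyond the $(\log x)^B$ range of Theorem \ref{theorem Fi} and beyond the $R \le x^{1/2}/(\log x)^{A+B}$ constraint of Theorem \ref{theorem main full range}. I expect the bulk of the work to be: (a) verifying that the asymptotic \eqref{enonce premier thm} for the $\psi^*(x;q,a)-\psi(x)/\phi(q)$ average in fact holds uniformly for $M$ up to a small power of $x$ — this should follow from the proof in \cite{Fi}, since the error term there is genuinely of shape $M^{-c}$ arising from sums of a multiplicative function and subconvexity, both of which remain valid in the wider range, though one must track that no term of size $(\log M)/\sqrt{M}$ or similar is lost; and (b) dealing with the boundary strip $x^{1/2}/(\log x)^{A+B} < R \le x^{1/2}$ for the Vaughan-approximation difference, where Theorem \ref{theorem main full range} does not directly apply and one falls back on trivial pointwise bounds, with the resulting loss being exactly the $RM/x^{1/2}$ term. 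Everything else — the telescoping, the PNT estimate for the third bracket, the bookkeeping of error terms — is routine.
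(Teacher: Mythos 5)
There is a genuine gap, and it is exactly the one you flagged but then waved away. Your decomposition routes everything through $\psi^*(x;q,a)$, so you need Theorem \ref{theorem Fi} (for the bracket $\psi^*-\psi(x)/\phi(q)$) and Theorem \ref{theorem main full range} (ii) (for $\rho_R^*-\psi^*$) in the range $M\leq x^{1/2}/|a|$. Neither is available there, and your hope that ``the asymptotic \eqref{enonce premier thm} in fact holds uniformly for $M$ up to a small power of $x$ \dots should follow from the proof in \cite{Fi}'' is false: the error term in \cite{Fi} (and in \eqref{equation average of psi^* q<x/M} here) coming from the dispersion method of Fouvry/Bombieri--Friedlander--Iwaniec is $O_B(x/(\log x)^B)$, which after normalization is $O_B(M/(\log x)^B)$. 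That is only $o(1)$, let alone $O(M^{-c})$, when $M\leq(\log x)^{B-\epsilon}$; for $M=x^{\delta}$ it swamps the main term $\asymp(\log M)/M$ entirely. Establishing \eqref{enonce premier thm} with a power saving in $M$ for $M$ a power of $x$ would amount to an equidistribution statement for primes to moduli near $x^{1-\delta}$ far beyond what the dispersion method gives, so step (a) of your plan cannot be repaired by ``going through the proof.'' The same objection applies to your use of Theorem \ref{theorem main full range} (ii), whose hypotheses also force $M\leq(\log x)^A$.

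The point you are missing is that Proposition \ref{proposition discrepancies rho} involves no primes at all, and the paper exploits this: $\rho_R^*(x;q,a)$ is an elementary object, so $\sum_{q\leq x/M,\,(q,a)=1}\rho_R^*(x;q,a)$ is evaluated directly by divisor switching (Lemmas \ref{lemma average of rho coprime} and \ref{lemma sum coprime rho full range}), leaving sums of multiplicative functions that are estimated with genuine power savings (Lemma \ref{lemma sum over r coprime to a}, and \cite[Lemma 5.9]{Fi} for $\sum_{s\leq M,(s,a)=1}\phi(s)^{-1}(1-s/M)$, which is where both $\mu(a,M)$ and the exponent $171/448$ -- via subconvexity for $\zeta$, not via primes -- actually come from). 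One then subtracts the elementary estimate for $\sum_{q\leq x/M,(q,a)=1}x/\phi(q)$ from \cite[Lemma 13.1]{FGHM}. Every step is uniform for $M$ up to $x^{1/2}$, which is the whole content of the proposition. Your telescoping argument would yield a correct proof only in the range $M\leq(\log x)^{O(1)}$, where the statement is already implicit in Theorems \ref{theorem Fi} and \ref{theorem main full range}; as a proof of the stated uniformity it does not go through.
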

Note that by Lemma \ref{lemma discrepancy}, the quantity $\rho_R^*(x;q,a)-\frac x{\phi(q)}$ approximately equals the discrepancy (with signs) of the distribution of $F_R(n)$ in the arithmetic progressions $a \bmod q$ with $(a,q)=1$. 

\begin{remark}
Combining either \eqref{eq:lemma divisor switch} or \eqref{equation average rho coprime without condition on M} with the formula
$$ \sum_{n\leq x} \left( \frac 1n-\frac 1x \right)= \log x+\gamma-1+\frac 1{2x} +O\left( \frac 1{x^2} \right) \hspace{1.5cm} (x\in \mathbb R_{\geq 1}), $$
one can estimate the quantities in Theorems \ref{theorem main dyadique}, \ref{theorem main full range} and Proposition \ref{proposition discrepancies rho} in the range $R < M \leq R^{1+\delta}$, for some $\delta>0$. 
The resulting bounds are weaker than in the case $R\geq M$, and thus we decided not to pursue this further.
\end{remark}

\section{The dyadic average}

Let us first recall two results of \cite{V1}. The proofs of these results are contained\footnote{In Lemma \ref{lemma:additive character sum2} we have used the identity $\mu(r)\mu(r/(r,a))/\phi(r/(r,a)) = \mu^2(r)\mu((r,a)) \phi((r,a))/\phi(r) $.} in that of \cite[Theorem 1]{V1} and will therefore be omitted. 

\begin{lemma}
\label{lemma:additive character sum}
Assume that $a$, $r$ and $s$ are integers with $r,s \geq 1$. We have for $a\leq  y \leq x $ with $y\geq 0$ that
\begin{equation}
\label{eq:additive character lemma}
\sum_{\substack{ 1\leq b\leq r \\ (b,r)=1}}\sum_{\substack{ y< n\leq x \\ n\equiv a \bmod s  }} e(bn/r)= \delta_{r \mid s} \frac{x-y}s \frac{\mu(r/(r,a))\phi(r)}{\phi(r/(r,a))}  +O\left(r\log r\right),
\end{equation}
where $\delta_{r \mid s}$ equals $1$ when $r \mid s$, and $0$ otherwise.
\end{lemma}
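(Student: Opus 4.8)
The goal is to evaluate, for integers $a,r,s\geq 1$ and $a\le y\le x$, the double sum
\[
S:=\sum_{\substack{1\leq b\leq r\\(b,r)=1}}\sum_{\substack{y<n\leq x\\n\equiv a\bmod s}}e(bn/r).
\]
The natural first move is to interchange the order of summation: writing $n=a+ms$ as $m$ runs over the integers in $((y-a)/s,(x-a)/s]$, the inner sum over $b$ becomes a Ramanujan-type sum
\[
\sum_{\substack{1\leq b\leq r\\(b,r)=1}}e(b(a+ms)/r).
\]
The plan is to split according to whether $r\mid s$ or not. If $r\mid s$, then $e(bms/r)=1$ for every $m$, so the sum over $b$ is just the Ramanujan sum $c_r(a)=\sum_{(b,r)=1}e(ba/r)$, which is independent of $m$; the $m$-sum then contributes the number of integers in an interval of length $(x-y)/s$, namely $(x-y)/s+O(1)$, and one invokes the classical evaluation $c_r(a)=\mu(r/(r,a))\phi(r)/\phi(r/(r,a))$ to get the main term, with the $O(1)$ times $|c_r(a)|\le \phi(r)\le r$ absorbed into the error. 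If $r\nmid s$, one must show the whole sum is $O(r\log r)$.

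For the case $r\nmid s$, the cleanest approach is to expand the coprimality condition by Möbius over $d\mid r$: $\sum_{(b,r)=1}(\cdots)=\sum_{d\mid r}\mu(d)\sum_{b'=1}^{r/d}(\cdots)$ with $b=db'$, turning the inner sum over $b$ into a geometric sum $\sum_{b'}e(db'(a+ms)/(r/d))$, which is either $r/d$ (when $(r/d)\mid d(a+ms)$, i.e.\ a congruence condition on $m$) or zero. Summing this over $m$ in an interval and then over $d\mid r$, each $d$ contributes $O\!\bigl((r/d)\cdot(\text{density }d/r)\cdot(\text{interval length})\bigr)+O(r/d)$ where the congruence in $m$ has modulus $(r/d)/(d,\cdot)$ — one has to track this carefully, but the upshot is that for $r\nmid s$ the "main term" contributions from the various $d$ telescope/cancel (this is exactly the statement $c_r$ applied to the arithmetic-progression structure vanishes), leaving only the $O(1)$-per-residue-class errors, summing to $\sum_{d\mid r}O(r/d)\cdot O(\text{something})$; a more robust alternative that avoids the cancellation bookkeeping is to bound $S$ by swapping back: $|S|\le\sum_{b:(b,r)=1}\bigl|\sum_{y<n\le x,\,n\equiv a(s)}e(bn/r)\bigr|$ and estimate each inner geometric-type sum over an arithmetic progression by $\min\bigl((x-y)/s,\;\|bs/r\|^{-1}\bigr)+O(1)$, then sum over $b$ using standard bounds on $\sum_{b\le r}\min(\cdots,\|bs/r\|^{-1})$ when $r\nmid s$, which is $O(r\log r)$.

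I would carry it out in this order: (1) interchange summation and reduce to a sum over $m$ of Ramanujan-type inner sums; (2) dispose of the easy case $r\mid s$ by pulling out $c_r(a)$, counting lattice points, and citing $c_r(a)=\mu(r/(r,a))\phi(r)/\phi(r/(r,a))$; (3) for $r\nmid s$, apply the $\min(\text{length},\|\cdot\|^{-1})$ bound to each progression sum in $n$ and sum the resulting $\min$-expression over $b\le r$, getting $O(r\log r)$; (4) note the $O(r\log r)$ from step 3 dominates the $O(r)$ error in step 2, so the two cases combine into the single stated bound. The main obstacle is step 3: getting the clean $O(r\log r)$ rather than something like $O(r\log r + $ length$)$ requires exploiting that $r\nmid s$ so that the denominators $\|bs/r\|$ do not all cluster near integers — equivalently, controlling $\sum_{b\le r}\min\bigl((x-y)/s,\|bs/r\|^{-1}\bigr)$ uniformly; since this and the cancellation approach are both standard (this is exactly how the estimate appears inside the proof of \cite[Theorem 1]{V1}), the delicate point is really just bookkeeping the uniformity in $y$ and in the length of the interval.
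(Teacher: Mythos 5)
Your proposal is correct and is essentially the standard argument: the paper itself omits the proof, deferring to \cite[Theorem 1]{V1}, and the computation there is exactly your outline --- parametrize $n=a+ms$, pull out the Ramanujan sum $c_r(a)=\mu(r/(r,a))\phi(r)/\phi(r/(r,a))$ when $r\mid s$, and otherwise bound each geometric progression by $\ll\|bs/r\|^{-1}$ and sum over $b$. For the point you flag as delicate, note that with $r'=r/(r,s)>1$ the map $b\mapsto bs \bmod r$ sends units mod $r$ to the classes $(r,s)c$ with $(c,r')=1$, each hit $\phi(r)/\phi(r')$ times, so $\sum_{(b,r)=1}\|bs/r\|^{-1}\ll(\phi(r)/\phi(r'))\,r'\log r'\leq r\log r$, with no dependence on the interval length.
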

\begin{lemma}
\label{lemma:additive character sum2}
Let $a \in \mathbb Z$ and $s\in \mathbb Z_{\geq 1}$. If $a\leq  y \leq x $ and $y\geq 0$, then
\begin{equation}
\label{eq:additive character lemma2}
\sum_{\substack{ y< n\leq x \\ n\equiv a \bmod s  }} F_R(n)= \frac{x-y}s \sum_{\substack{ r\leq R \\ r\mid s}} \frac{\mu^2(r)\mu((r,a)) \phi((r,a))}{\phi(r)}  +O(R).
\end{equation}

\end{lemma}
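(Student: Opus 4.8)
The plan is to expand $F_R(n)$ through its additive‑character definition and reduce the whole sum to counting integers in arithmetic progressions. Recall that $\sum_{1\le b\le r,\,(b,r)=1}e(bn/r)$ is the Ramanujan sum $c_r(n)=\sum_{d\mid(r,n)}\mu(r/d)d$, so $F_R(n)=\sum_{r\le R}\frac{\mu(r)}{\phi(r)}\sum_{d\mid(r,n)}\mu(r/d)d$ — the divisor representation of $c_r$ being, in effect, also what underlies Lemma \ref{lemma:additive character sum}. Substituting this and exchanging the order of summation turns the left‑hand side of \eqref{eq:additive character lemma2} into $\sum_{r\le R}\frac{\mu(r)}{\phi(r)}\sum_{d\mid r}\mu(r/d)d\cdot N_d$, where $N_d:=\#\{n:\ y<n\le x,\ n\equiv a\bmod s,\ d\mid n\}$. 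The elementary input I need is that $N_d=0$ unless $(d,s)\mid a$, and that otherwise the congruences $n\equiv a\bmod s$ and $d\mid n$ pin $n$ down modulo $ds/(d,s)$, whence $N_d=\frac{(d,s)}{ds}(x-y)+O(1)$. (The hypotheses $a\le y\le x$, $y\ge0$ are not used in this step.)

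I would then separate the main term $\frac{x-y}{s}\sum_{r\le R}\frac{\mu(r)}{\phi(r)}U_r$, with $U_r:=\sum_{d\mid r,\,(d,s)\mid a}\mu(r/d)(d,s)$, from the error. The crucial point is that $U_r=0$ unless $r\mid s$. To see this I would set $g=(r,s)$ and $h=r/g$; since $r$ is squarefree one has $(g,h)=(h,s)=1$, so every $d\mid r$ factors uniquely as $d=d_1d_2$ with $d_1\mid g$ and $d_2\mid h$, and then $(d,s)=d_1$, so the condition $(d,s)\mid a$ becomes $d_1\mid(g,a)$. Consequently $U_r=\bigl(\sum_{d_1\mid(g,a)}\mu(g/d_1)d_1\bigr)\bigl(\sum_{d_2\mid h}\mu(h/d_2)\bigr)$, and the second factor is $0$ when $h>1$; when $r\mid s$ it equals $1$ and $U_r=c_r(a)$. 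Hence the main term collapses to $\frac{x-y}{s}\sum_{r\le R,\,r\mid s}\frac{\mu(r)}{\phi(r)}c_r(a)$, and inserting $c_r(a)=\mu(r/(r,a))\phi(r)/\phi(r/(r,a))$ together with the identity $\mu(r)\mu(r/(r,a))/\phi(r/(r,a))=\mu^2(r)\mu((r,a))\phi((r,a))/\phi(r)$ recorded in the footnote produces exactly the main term of \eqref{eq:additive character lemma2}.

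For the error, the contribution of the $O(1)$'s is $\ll\sum_{r\le R}\frac{\mu^2(r)}{\phi(r)}\sum_{d\mid r}d$; using $\frac1{\phi(r)}\sum_{d\mid r}d=\prod_{p\mid r}\frac{p+1}{p-1}=\sum_{e\mid r}\frac{\mu^2(e)2^{\omega(e)}}{\phi(e)}$ for squarefree $r$ and summing over $r$ before $e$ gives $\ll R\sum_{e\ge1}\frac{\mu^2(e)2^{\omega(e)}}{e\phi(e)}\ll R$, which is the bound claimed. I do not foresee a real obstacle; the step that must be gotten right is the vanishing of $U_r$ for $r\nmid s$ — it is precisely this cancellation, an interaction between the modulus $s$ of the progression and the level $r$, that forces the final sum to run only over divisors $r\mid s$. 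If one instead wanted to push the argument through the additive characters of Lemma \ref{lemma:additive character sum} directly, one should be careful to keep the divisor‑shaped error $O(\sum_{d\mid r}d)$ in place of the cruder $O(r\log r)$: only the former, after division by $\phi(r)$ and summation over $r\le R$, still gives $O(R)$.
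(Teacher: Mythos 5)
Your proof is correct. The paper itself omits the argument, stating that both Lemmas \ref{lemma:additive character sum} and \ref{lemma:additive character sum2} are contained in the proof of \cite[Theorem 1]{V1}, so there is nothing in the text to compare line by line; but your route --- expanding $F_R$ via $c_r(n)=\sum_{d\mid (r,n)}\mu(r/d)d$, counting $N_d$ with error $O(1)$, and showing $U_r=\bigl(\sum_{d_1\mid (g,a)}\mu(g/d_1)d_1\bigr)\bigl(\sum_{d_2\mid h}\mu(h/d_2)\bigr)=0$ unless $r\mid s$ --- is exactly the mechanism underlying Vaughan's lemma, and all the steps check out (the factorization $d=d_1d_2$ with $(d,s)=d_1$ uses only that $r$ is squarefree and $(h,s)=1$, and the identification $U_r=c_r(a)$ for $r\mid s$ together with H\"older's formula and the footnoted identity gives precisely the stated main term). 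Your closing observation is also the right one to make: summing Lemma \ref{lemma:additive character sum} naively over $r\leq R$ would only yield an error $\ll \sum_{r\leq R}\mu^2(r)\,r\log r/\phi(r)$, which exceeds $O(R)$, whereas the divisor-shaped error $O(\sigma(r))$ per $r$, combined with $\sum_{r\leq R}\mu^2(r)\sigma(r)/\phi(r)\ll R$, recovers the $O(R)$ claimed in \eqref{eq:additive character lemma2}.
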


\begin{remark}
Lemma \ref{lemma:additive character sum2} implies that
\begin{align}
\rho_R^*(x;q,a) =\frac xq \sum_{\substack{ r\leq R \\ r\mid q}} \frac{\mu^2(r)\mu((r,a))\phi((r,a))}{\phi(r)} + O(R). 
\label{equation first estimate for rho}
\end{align} 
This expression precise when $q$ is small compared to $x$ (c.f. \cite[Theorem 1, Corollaries 1.1-1.2]{V1}); for example when $q\leq R$ it takes the form  
$$ \rho_R^*(x;q,a) = \delta_{(q,a)=1}\frac x{\phi(q)}  + O(R). $$
However, \eqref{equation first estimate for rho} is not accurate when $q$ is close to $x$. Nevertheless we will see by a different approach (see for instance the proof of Theorem \ref{theorem main dyadique} (i)) that on average over large $q$, $\rho_R^*(x;q,a)$ is much closer to $\psi^*(x;q,a)$ than to $\delta_{(q,a)=1} x/\phi(q)$.
\end{remark}

We will average $\psi(x;q,a)$ and $\rho_R^*(x;q,a)$ over $q$ close to $x$ separately. We begin with $\rho_R^*(x;q,a)$.

\begin{lemma}
\label{lemma big values of q}
(i) For $0<|a| < x/N$ and $1\leq N,R\leq x$, we have 

\begin{multline} \sum_{\substack{ \frac xN <q\leq x }} \rho_R^*(x;q,a) = x\sum_{\substack{s \leq  N}} \frac 1s \left( 1- \frac sN \right)\sum_{\substack{r\leq R \\ r\mid s}} \frac{\mu^2(r)\mu((r,a))\phi((r,a))}{\phi(r)}  
 \label{eq:lemma divisor switch}
+O(  
RN + |a|(\log N)^2).
\end{multline}
(ii) Under the additional condition $N\leq R$ we have
\begin{align}\sum_{\substack{ \frac xN <q\leq x }} \rho_R^*(x;q,a) = 
x \sum_{\substack{ s\leq N \\ (s,a)=1}} \frac 1{\phi(s)} \left( 1-\frac sN\right)
+O\left(  
RN + |a|\log N\right).
\label{equation corollary sum of rho}
\end{align}

\end{lemma}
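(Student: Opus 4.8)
The plan is to prove (i) by a divisor switch, reducing everything to Lemma~\ref{lemma:additive character sum2}, and then to deduce (ii) by an algebraic simplification of the resulting main term. The point of the switch is that estimating each $\rho_R^*(x;q,a)$ individually via \eqref{equation first estimate for rho} would produce an error $\gg Rx$ after summing over the $\asymp x$ values of $q$, whereas exchanging $q$ with the complementary divisor will leave only $\asymp N$ terms to estimate. Concretely, write every $n$ counted in $\rho_R^*(x;q,a)$ as $n=a+dq$ with $d\geq 1$ (which is exactly the condition $n\equiv a\bmod q$, $n>a$); then $n\leq x$ reads $dq\leq x-a$, and
\[\sum_{\frac xN<q\leq x}\rho_R^*(x;q,a)=\sum_{d\geq 1}\ \sum_{\frac xN<q\leq\min\left(x,\frac{x-a}d\right)}F_R(a+dq).\]
Since $|a|<x/N$, the inner sum is empty unless $d\leq D$ for some $D\leq N$; this is the source of the factor $N$ (rather than $x$) in the error term.

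For each fixed $d\leq D$ I would set $m=a+dq$, so that $m$ runs over the residue class $a\bmod d$ inside an interval $(y_d,x_d]$ with $y_d=a+dx/N$ and $x_d=\min(a+dx,x)$. A short check using $|a|<x/N$ and $d\leq D$ shows $0\leq y_d$ and $a\leq y_d\leq x_d$, so Lemma~\ref{lemma:additive character sum2} applies with $s=d$ and gives
\[\sum_{\frac xN<q\leq\min\left(x,\frac{x-a}d\right)}F_R(a+dq)=\frac{x_d-y_d}d\sum_{\substack{r\leq R\\ r\mid d}}\frac{\mu^2(r)\mu((r,a))\phi((r,a))}{\phi(r)}+O(R),\]
and a direct computation yields $x_d-y_d=x(1-d/N)+O(|a|)$. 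Summing over $1\leq d\leq D$, the $O(R)$ term contributes $O(RD)=O(RN)$; the $O(|a|)$ part of $x_d-y_d$ contributes, after division by $d$ and summation, $\ll|a|\sum_{d\leq N}2^{\omega(d)}/d\ll|a|(\log N)^2$; and the remaining main part is $x\sum_{d\leq D}\frac1d\left(1-\frac dN\right)\sum_{r\leq R,\,r\mid d}\frac{\mu^2(r)\mu((r,a))\phi((r,a))}{\phi(r)}$. Finally the range of $d$ may be extended from $D$ to $N$ at no cost: one has $D=N$ when $a<0$, while for $a>0$ one has $D=N-1$ but the omitted term $d=N$ vanishes because of the factor $1-N/N=0$. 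Relabelling $d$ as $s$, this is exactly \eqref{eq:lemma divisor switch}.

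For (ii), observe that when $N\leq R$ the condition $r\leq R$ is automatically satisfied by every (squarefree) $r\mid s\leq N$, so the inner sum collapses to $\sum_{r\mid s}\frac{\mu^2(r)\mu((r,a))\phi((r,a))}{\phi(r)}$; this is multiplicative in $s$ and equals $\prod_{p\mid s}\frac p{p-1}=s/\phi(s)$ if $(s,a)=1$ and $0$ otherwise. Substituting this into \eqref{eq:lemma divisor switch} turns the main term into $x\sum_{s\leq N,\,(s,a)=1}\frac1{\phi(s)}(1-s/N)$, and the same collapse replaces $2^{\omega(d)}$ by $d/\phi(d)$ in the secondary error, improving it to $\ll|a|\sum_{d\leq N}1/\phi(d)\ll|a|\log N$; this gives \eqref{equation corollary sum of rho}.

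The only delicate point — really the only one — is the endpoint bookkeeping in the middle step: one must verify that $y_d\geq 0$ and $y_d\leq x_d$ so that Lemma~\ref{lemma:additive character sum2} is legitimately applicable for every relevant $d$, keep the $O(|a|)$ correction coming from $x_d-y_d$ (which is responsible for the $|a|(\log N)^2$, resp.\ $|a|\log N$, term), and check that truncating the $d$-sum at $D$ rather than $N$ costs nothing. Once these are in place, everything else is a mechanical consequence of Lemma~\ref{lemma:additive character sum2}.
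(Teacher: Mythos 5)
Your proposal is correct and follows essentially the same route as the paper: the divisor switch $n=a+dq$, application of Lemma~\ref{lemma:additive character sum2} to each inner sum with $y=a+dx/N$, the bound $2^{\omega(d)}$ (resp.\ $d/\phi(d)$) on the inner $r$-sum to control the $O(|a|)$ endpoint corrections, and the multiplicative collapse $\sum_{r\mid s}\mu^2(r)\mu((r,a))\phi((r,a))/\phi(r)=\delta_{(s,a)=1}\,s/\phi(s)$ for part (ii). Your bookkeeping of the upper endpoint $x_d=\min(a+dx,x)$ and of the extension of the $d$-range to $N$ is, if anything, slightly more careful than the paper's, and the $O(1)$ boundary terms with $d$ near $N$ are absorbed exactly as in the paper via $|1-d/N|\leq |a|/x$.
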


\begin{proof}
We rewrite the conditions $n\equiv a \bmod q; n> a; x/N <q\leq x$ as $n=a+qs,$ with $1\leq s< N-ax/N$ and $a+sx/N< n\leq x$. We obtain that
\begin{align*}
\sum_{\substack{ \frac xN <q\leq x }} \rho_R^*(x;q,a) &=
\sum_{\substack{ 1\leq s < N -\frac{aN}x}}\sum_{\substack{ a+\frac{sx}N< n\leq x \\ n\equiv a \bmod s  }} F_R(n).
\end{align*}

Applying Lemma \ref{lemma:additive character sum2} with $y=a+sx/N>0$, we see that this expression equals
\begin{align}
&x\sum_{\substack{ 1\leq s < N -\frac{aN}x }} \frac 1s \left( 1- \frac sN -\frac ax\right)\sum_{\substack{r\leq R \\ r\mid s}} \frac{\mu^2(r)\mu((r,a))\phi((r,a))}{\phi(r)}+O(RN) \label{eq:RN} \\
&=x\sum_{\substack{ 1\leq s < N-\frac{aN}x}} \frac 1s  \left( 1- \frac sN \right) \sum_{\substack{r\leq R \\ r\mid s}} \frac{\mu^2(r)\mu((r,a))\phi((r,a))}{\phi(r)}+O(RN+|a| (\log N)^2) \notag
\\&= x\sum_{\substack{ 1\leq s \leq N }} \frac 1s \left( 1- \frac sN \right) \sum_{\substack{r\leq R \\ r\mid s}} \frac{\mu^2(r)\mu((r,a))\phi((r,a))}{\phi(r)} +O\left( RN+|a|(\log N)^2 \right),\notag
\end{align}
since for $s\in (N-|a|N/x,N+|a|N/x)$ we have that $ |1-s/N|< |a|/x$. The estimate \eqref{eq:lemma divisor switch} follows.

To establish \eqref{equation corollary sum of rho} we come back to \eqref{eq:RN}. Under the condition $N\leq R$, we have that \eqref{eq:RN} equals (the second error term in the following expression is only present in the case $a<0$)
\begin{align*} &x\sum_{\substack{ 1\leq s < N -\frac{aN}x }} \frac 1s\left( 1- \frac sN -\frac ax\right) \sum_{\substack{ r\mid s}} \frac{\mu^2(r)\mu((r,a))\phi((r,a))}{\phi(r)} +O(RN) \\
&\hspace{2cm} +O\Big( x\sum_{\substack{ N <  s < N -\frac{aN}x }} \frac 1{N}\cdot \frac{|a|}x \sum_{\substack{ r\mid s \\ \frac s2 \leq  r \leq s }} 1 \Big) \\
&=x\sum_{\substack{ 1\leq s < N -\frac{aN}x \\ (s,a)=1}} \frac 1{\phi(s)} \left( 1- \frac sN -\frac ax\right) +O(|a|+RN),
\end{align*}
by multiplicativity. Note that if $(s,a)>1$, then 
$$ \sum_{\substack{r\mid s}} \frac{\mu^2(r)\mu((r,a))\phi((r,a))}{\phi(r)} = \prod_{\substack{p\mid s \\ p\nmid a}} \left( 1+\frac 1{p-1} \right) \prod_{\substack{ p\mid s \\ p\mid a}} (1-1) = 0. $$
The proof follows.

\end{proof}

We now average $\psi^*(x;q,a)$ over $q$ close to $x$.

\begin{lemma}
\label{lemma average of psi}
Fix $A,B\geq 1$. In the range $1\leq N\leq (\log x)^A$ and for $0<|a| < x/N$ we have 

\begin{equation} \sum_{\substack{ \frac xN <q\leq x }} \psi^*(x;q,a) = x\sum_{\substack{ s \leq N \\ (s,a)=1}} \frac{1}{\phi(s)} \left( 1-\frac sN\right)
+O_{A,B}\left( \frac x{(\log x)^{B}} \right)+ O( |a|\log N).
\end{equation}
\end{lemma}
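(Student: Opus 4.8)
The plan is to reduce the sum $\sum_{x/N < q \le x} \psi^*(x;q,a)$ to a counting problem over the hyperbola $n = a + qs$, exactly as in the proof of Lemma~\ref{lemma big values of q}. Writing the conditions $n \equiv a \bmod q$, $n > a$, $x/N < q \le x$ as $n = a + qs$ with $1 \le s < N - aN/x$ and $a + sx/N < n \le x$, we get
\[
\sum_{\frac xN < q \le x} \psi^*(x;q,a) = \sum_{1 \le s < N - \frac{aN}{x}} \sum_{\substack{ a + \frac{sx}{N} < n \le x \\ n \equiv a \bmod s}} \Lambda(n).
\]
Now the inner sum is $\psi(x;s,a) - \psi(a + sx/N; s,a)$, a difference of $\psi$ in a \emph{fixed} progression modulo $s \le N \le (\log x)^A$. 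First I would discard the terms with $(s,a) > 1$: for such $s$ the progression $a \bmod s$ contains at most one prime power, so the inner sum is $O(\log x)$ and the total contribution is $O(N \log x) = O_A(x/(\log x)^B)$ after using $N \le (\log x)^A$ --- actually one should be slightly more careful and note these contribute $O(|a|\log N)$ plus a negligible amount, matching the stated error. For $(s,a)=1$, I would invoke the Siegel--Walfisz theorem: uniformly for $s \le (\log x)^{A}$ and any $1 \le y \le x$,
\[
\psi(y; s, a) = \frac{\psi(y)}{\phi(s)} + O_{A,C}\!\left( y \exp(-c\sqrt{\log y}) \right),
\]
hence also $O_{A,C}(y/(\log y)^C)$ for any $C$. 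Applying this at $y = x$ and $y = a + sx/N$ gives
\[
\psi(x;s,a) - \psi\!\left(a + \tfrac{sx}{N}; s, a\right) = \frac{1}{\phi(s)}\Big(\psi(x) - \psi\!\big(a + \tfrac{sx}{N}\big)\Big) + O_{A,C}\!\left( \frac{x}{(\log x)^{C}} \right),
\]
and summing over the $O(N) = O_A((\log x)^A)$ values of $s$ keeps the error at $O_{A,B}(x/(\log x)^B)$ provided $C$ is chosen large enough in terms of $A$ and $B$.

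Next I would handle the main term. By the prime number theorem, $\psi(x) = x + O(x \exp(-c\sqrt{\log x}))$ and $\psi(a + sx/N) = a + sx/N + O(x\exp(-c\sqrt{\log x}))$, so
\[
\psi(x) - \psi\!\left(a + \tfrac{sx}{N}\right) = x\left(1 - \tfrac sN\right) - a + O\!\left(x\exp(-c\sqrt{\log x})\right).
\]
The contribution of the $-a$ term, summed against $1/\phi(s)$ over $s < N - aN/x$, is $O(|a| \log N)$, which is one of the allowed error terms. The PNT error, multiplied by $\sum_{s \le N} 1/\phi(s) = O(\log N)$, is absorbed into $O_{A,B}(x/(\log x)^B)$. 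This leaves the main term
\[
x \sum_{\substack{1 \le s < N - \frac{aN}{x} \\ (s,a)=1}} \frac{1}{\phi(s)}\left(1 - \frac sN\right),
\]
and exactly as in Lemma~\ref{lemma big values of q} the discrepancy between summing over $s < N - aN/x$ and over $s \le N$ is controlled: for $s$ in the short interval $(N - |a|N/x, N + |a|N/x)$ one has $|1 - s/N| < |a|/x$, and there are $O(|a|/x \cdot N / \phi(\text{min}))$ such $s$, contributing $O(|a|\log N)$ after accounting for $1/\phi(s)$. Hence the main term equals $x\sum_{s \le N,\,(s,a)=1} \phi(s)^{-1}(1 - s/N)$ up to $O(|a|\log N)$, giving the claimed formula.

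The main obstacle is the uniformity in $s$ in the Siegel--Walfisz input: the classical theorem gives power-saving error terms only for $s \le (\log x)^{A}$ with the implied constant depending ineffectively on $A$, which is precisely why the hypothesis $N \le (\log x)^A$ appears and why the error term is $O_{A,B}(x/(\log x)^B)$ rather than a power saving. One has to be slightly careful that $y = a + sx/N$ can be as large as $x$ but is bounded below by, say, $x/N \ge x/(\log x)^A$, so $\log y \asymp \log x$ and the Siegel--Walfisz error at $y$ is genuinely of the same quality as at $x$; this is where the range restriction $|a| < x/N$ is used to keep $y$ of size comparable to $x$. Everything else --- the hyperbola substitution, the removal of $(s,a)>1$ terms, and the truncation of the $s$-sum at $N$ --- is routine and parallels the proof of Lemma~\ref{lemma big values of q} verbatim.
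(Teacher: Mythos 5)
Your proof is correct and follows essentially the same route as the paper: swap the order of summation to reduce to progressions modulo $s\leq N\leq(\log x)^A$, apply Siegel--Walfisz to each, and control the $(s,a)>1$ terms and the truncation of the $s$-range trivially. (Minor quibble: for $(s,a)>1$ the progression can contain up to $O(\omega(a)\log x)$ prime powers rather than one, but the resulting contribution $O(N\,\omega(a)\log x)$ is still absorbed into $O_{A,B}(x/(\log x)^B)$, so nothing changes.)
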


\begin{proof}

The proof is achieved by swapping moduli as in the proof of Lemma \ref{lemma big values of q} and applying the Siegel-Walfisz Theorem. We have
\begin{align*}
\sum_{\substack{ \frac xN <q\leq x }} \psi^*(x;q,a) &=\sum_{\substack{ 1\leq s < N -\frac{aN}x}}\sum_{\substack{ a+\frac{sx}N< n\leq x \\ n\equiv a \bmod s  }} \Lambda(n).\\
&= \sum_{\substack{ 1\leq s < N -\frac{aN}x \\ (s,a)=1}} \frac{x-(a+sx/N)}{\phi(s)} +O_{A,B}\left( \frac x{(\log x)^{B}}\right)\\
&= x\sum_{\substack{ 1\leq s \leq N \\ (s,a)=1}} \frac{1}{\phi(s)} \left( 1-\frac sN\right) +O_{A,B}\left( \frac x{(\log x)^{B}}\right)+O( |a|\log N).
\end{align*}
\end{proof}

\begin{corollary}
\label{corollary large q}
Fix $A,B \geq 1$. For $0<|a| < x/N$, $1\leq N\leq (\log x)^A$ and $N\leq R \leq x/(\log x)^{A+B}$ we have
\begin{align*}\sum_{\substack{ \frac xN <q\leq x }} (\psi^*(x;q,a) -\rho_R^*(x;q,a)) = 
O_{A,B}\left(
\frac x{(\log x)^B} \right)+O(|a|\log N ).
\end{align*}

\end{corollary}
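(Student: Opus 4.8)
The plan is to combine the two preceding averaging results and exploit the fact that their main terms coincide exactly. First I would apply Lemma \ref{lemma average of psi}, which is valid since $1\leq N\leq(\log x)^A$ and $0<|a|<x/N$, to write
\[\sum_{\frac xN<q\leq x}\psi^*(x;q,a)=x\sum_{\substack{s\leq N\\(s,a)=1}}\frac1{\phi(s)}\left(1-\frac sN\right)+O_{A,B}\left(\frac x{(\log x)^B}\right)+O(|a|\log N).\]
Then, since the hypothesis $N\leq R$ is in force, I would apply Lemma \ref{lemma big values of q}(ii) to obtain
\[\sum_{\frac xN<q\leq x}\rho_R^*(x;q,a)=x\sum_{\substack{s\leq N\\(s,a)=1}}\frac1{\phi(s)}\left(1-\frac sN\right)+O\left(RN+|a|\log N\right).\]

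Subtracting these two identities, the weighted sums over $s$ cancel identically, leaving
\[\sum_{\frac xN<q\leq x}\left(\psi^*(x;q,a)-\rho_R^*(x;q,a)\right)=O_{A,B}\left(\frac x{(\log x)^B}\right)+O(RN)+O(|a|\log N).\]
To finish, I would absorb the term $O(RN)$ into the first error term using the two remaining hypotheses $N\leq(\log x)^A$ and $R\leq x/(\log x)^{A+B}$, which together give $RN\leq x/(\log x)^{A+B}\cdot(\log x)^A=x/(\log x)^B$. This is precisely the claimed estimate, with the $O(|a|\log N)$ term carried along unchanged.

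There is no genuine obstacle here: the entire substance of the statement is already contained in Lemmas \ref{lemma average of psi} and \ref{lemma big values of q}(ii), and the corollary is literally their difference. The only thing demanding attention is the bookkeeping of ranges and error terms — verifying that the hypotheses of the corollary ($|a|<x/N$ for both lemmas, $N\leq(\log x)^A$ for the Siegel--Walfisz input to Lemma \ref{lemma average of psi}, and $N\leq R$ so that the non-coprime moduli contribute nothing in Lemma \ref{lemma big values of q}(ii)) are exactly what is needed, and that $RN\ll x/(\log x)^B$ under $R\leq x/(\log x)^{A+B}$. It is worth emphasising that the exact matching of the two main terms is the ``cancellation of the whole sums'' phenomenon alluded to in the introduction, and this is what makes the error term so much stronger than the one appearing in Theorem \ref{theorem Fi}.
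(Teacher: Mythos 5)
Your proof is correct and follows exactly the paper's argument: the paper's proof of this corollary is literally ``Combine Lemmas \ref{lemma big values of q} (ii) and \ref{lemma average of psi}. Note that the main terms in these estimates are identical.'' Your additional bookkeeping, in particular absorbing $O(RN)$ into $O_{A,B}(x/(\log x)^B)$ via $RN\leq (\log x)^A\cdot x/(\log x)^{A+B}$, correctly fills in the only detail the paper leaves implicit.
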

\begin{proof}
Combine Lemmas \ref{lemma big values of q} (ii) and \ref{lemma average of psi}. Note that the main terms in these estimates are identical. 
\end{proof}
We are ready to prove Theorem \ref{theorem main dyadique} (i).

\begin{proof}[Proof of Theorem \ref{theorem main dyadique} (i)]
Take $N=2M$ and $N=M$ in Corollary \ref{corollary large q}, and subtract the resulting expressions. 
\end{proof}

\section{Averages of multiplicative functions}

In this section we give estimates on averages of multiplicative functions which will be needed in Sections \ref{section all moduli} and \ref{section coprimality} to average $\rho(x;q,a)$ over the full range $q\leq x/M$. The following two constants will appear repeatedly:
$$C_1(a):=\frac{ \zeta(2)\zeta(3)}{\zeta(6)}\frac {\phi(a)}{a}\prod_{p\mid a} \left( 1-\frac 1 {p^2-p+1}\right),$$
$$C_2(a):=C_1(a)\left(\gamma -1 - \sum_{p} \frac {\log p} {p^2-p+1}+\sum_{p\mid a} \frac{p^2\log p}{(p-1)(p^2-p+1)}\right). $$

\begin{lemma}
\label{lemma squarefree}
There exists an absolute constant $c$ such that for $x\in \mathbb R_{\geq 3}$ and $\ell \in \mathbb Z_{\geq 1}$ with $\ell\leq x^{10}$,
\begin{equation}
\sum_{\substack{n > x \\ (n,\ell)=1}} \frac {\mu^2(n)}{n^2} =  \frac 1{\zeta(2)x}\prod_{p\mid \ell} \left( 1+\frac 1{p} \right)^{-1} +O\Bigg( \frac{1} {x^{\frac 32} \exp \big(c \frac{(\log x)^{\frac 35}}{(\log\log x)^{\frac 15}} \big)}\Bigg);
\label{eq:squarefree}
\end{equation}  
\begin{equation}
\sum_{\substack{n > x \\ (n,\ell)=1}} \frac {\mu^2(n)\log n}{n^2} =  \frac {\log x+1}{\zeta(2)x}\prod_{p\mid \ell} \left( 1+\frac 1{p} \right)^{-1} +O\Bigg( \frac{1} {x^{\frac 32} \exp \big(c \frac{(\log x)^{\frac 35}}{(\log\log x)^{\frac 15}} \big)}\Bigg).
\label{eq:squarefree log}
\end{equation}  

\end{lemma}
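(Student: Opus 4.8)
The plan is to reduce both identities to partial-summation arguments applied to the classical tail estimate for $\sum_{n>x}\mu^2(n)/n^2$ without the coprimality condition, together with an elementary sieve to insert the condition $(n,\ell)=1$. First I would record the unconditional estimate. Writing $\sum_{n\le y}\mu^2(n)=\frac{y}{\zeta(2)}+E(y)$ with $E(y)\ll \sqrt y\,\exp(-c(\log y)^{3/5}(\log\log y)^{-1/5})$ (this square-root-type error term is what the proof of \eqref{eq:squarefree} ultimately rests on; it follows from the zero-free region for $\zeta$, since $\sum\mu^2(n)n^{-s}=\zeta(s)/\zeta(2s)$), a partial summation gives
\begin{equation*}
\sum_{n>x}\frac{\mu^2(n)}{n^2}=\frac{1}{\zeta(2)x}+O\!\left(\frac{1}{x^{3/2}\exp(c(\log x)^{3/5}(\log\log x)^{-1/5})}\right),
\end{equation*}
and similarly $\sum_{n>x}\mu^2(n)(\log n)/n^2=(\log x+1)/(\zeta(2)x)$ plus the same error, by integrating $t^{-2}\log t$ against $d(\sum_{n\le t}\mu^2(n))$ (note $\int_x^\infty t^{-2}\log t\,dt=(\log x+1)/x$, which is exactly where the $+1$ comes from).

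Next I would handle the coprimality condition by Mobius inversion over the squarefree kernel of $\ell$. Since $\mu^2(n)\mathbf 1_{(n,\ell)=1}=\sum_{d\mid (n,\operatorname{rad}\ell)}\mu(d)\mu^2(n)$ and for $d\mid n$ squarefree with $(d,n/d)=1$ we can write $n=dm$ with $(m,d)=1$, one gets
\begin{equation*}
\sum_{\substack{n>x\\(n,\ell)=1}}\frac{\mu^2(n)}{n^2}=\sum_{d\mid \operatorname{rad}\ell}\frac{\mu(d)\mu^2(d)}{d^2}\sum_{\substack{m>x/d\\(m,d)=1}}\frac{\mu^2(m)}{m^2}.
\end{equation*}
Here the inner sum still carries a coprimality condition, so I would instead iterate the identity prime by prime, or more cleanly observe that $\sum_{n>x}\mu^2(n)[\,(n,\ell)=1\,]/n^2 = \sum_{n>x}\mu^2(n)/n^2 - \sum_{p\mid\ell}\cdots$ leads to an inclusion–exclusion; the cleanest route is to note the Euler-product identity $\sum_{(n,\ell)=1}\mu^2(n)n^{-s}=\frac{\zeta(s)}{\zeta(2s)}\prod_{p\mid\ell}(1+p^{-s})^{-1}$, which at $s=2$ gives the constant $\frac{1}{\zeta(2)}\prod_{p\mid\ell}(1+1/p)^{-1}$ appearing in \eqref{eq:squarefree}. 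Applying the same partial-summation argument to this restricted Dirichlet series — equivalently, applying the unconditional estimate above with $d$ in place of scaling and summing the geometric-type corrections over $d\mid\operatorname{rad}\ell$ — produces the main term $\frac{1}{\zeta(2)x}\prod_{p\mid\ell}(1+1/p)^{-1}$.

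The one genuinely delicate point is bookkeeping the error term after inserting the $(n,\ell)=1$ condition, because $\ell$ can be as large as $x^{10}$, so $\operatorname{rad}\ell$ may have $\gg \log x$ prime factors and the divisor sum over $d\mid\operatorname{rad}\ell$ could a priori have many terms. The key observation that makes this harmless is that $\sum_{d\mid\operatorname{rad}\ell}\mu^2(d)/d^2 \le \prod_p(1+p^{-2})=\zeta(2)/\zeta(4) = O(1)$ is absolutely convergent, so the total contribution of the error terms $\sum_{d}\frac{\mu^2(d)}{d^2}\cdot \frac{1}{(x/d)^{3/2}}\exp(\cdots)$ is dominated by $x^{-3/2}\exp(\cdots)\sum_d d^{3/2}/d^2 = x^{-3/2}\exp(\cdots)\sum_d d^{-1/2}$, which does \emph{not} converge — so one cannot be this crude. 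Instead I would truncate: terms with $d>x^{1/2}$ are handled by the trivial bound $\sum_{m>x/d}\mu^2(m)/m^2 \ll d/x$, contributing $\ll \frac1x\sum_{d\mid\operatorname{rad}\ell, d>\sqrt x}\mu^2(d)/d \ll \frac{1}{x}\cdot\frac{1}{\sqrt x}\sum_d \mu^2(d)/\sqrt d$... again divergent, so the truly correct move is to keep $d\le x^{1/3}$ (say) with the strong error term, giving total error $\ll x^{-3/2}\exp(\cdots)\sum_{d\le x^{1/3}}\mu^2(d)d^{-1/2} \ll x^{-3/2}\exp(\cdots)\cdot x^{1/6}\cdot(\text{log factor})$, still fine after absorbing into $\exp(c(\log x)^{3/5}\cdots)$ with a smaller $c$; and the tail $d>x^{1/3}$ contributes $\ll \frac1x\sum_{d>x^{1/3}}\mu^2(d)/d$, which is $\ll \frac{\log x}{x}$ — too big. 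This shows one must \emph{not} expand into individual divisors of $\operatorname{rad}\ell$ but rather work directly with the restricted Dirichlet series $\zeta(s)/\zeta(2s)\prod_{p\mid\ell}(1+p^{-s})^{-1}$ and re-run the contour/zero-free-region argument behind \eqref{eq:squarefree} verbatim, since the extra Euler factors $\prod_{p\mid\ell}(1+p^{-s})^{-1}$ are holomorphic and bounded (by $\prod_p(1-p^{-\sigma})$ type bounds) in the relevant region uniformly for $\ell\le x^{10}$ — the constraint $\ell\le x^{10}$ is precisely what keeps those Euler factors from growing. I would therefore present the proof as: state the restricted analogue of the Perron/zero-free-region computation, note the auxiliary Euler product is $\ll 1$ for $\operatorname{Re}(s)\ge \sigma_0$ with the losses absorbable into the constant $c$, and conclude \eqref{eq:squarefree}; then \eqref{eq:squarefree log} follows by the same contour shift with an extra factor $-\zeta'/\zeta$-type logarithmic derivative, or simply by differentiating the already-proven \eqref{eq:squarefree} in a parameter / applying partial summation against $\log n$ as indicated above. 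The main obstacle, then, is purely the uniformity in $\ell$ of the auxiliary Euler factors, and the condition $\ell\le x^{10}$ is exactly tailored to control it.
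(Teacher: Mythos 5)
Your opening steps (the unconditional count $\sum_{n\le y}\mu^2(n)=y/\zeta(2)+O\big(y^{1/2}\exp(-c_1(\log y)^{3/5}(\log\log y)^{-1/5})\big)$ deduced from the Mertens bound, followed by partial summation against $t^{-2}$ and $t^{-2}\log t$) agree with the paper. The gap is in inserting the condition $(n,\ell)=1$, and the idea you are missing is the right convolution identity. Instead of $\mathbf 1_{(n,\ell)=1}=\sum_{d\mid(n,\operatorname{rad}\ell)}\mu(d)$, which as you noticed leaves a residual condition $(m,d)=1$ in the inner sums, the paper uses the Liouville-function identity
\[
\sum_{\substack{n\le y\\(n,\ell)=1}}\mu^2(n)=\sum_{d\mid\ell^{\infty}}\lambda(d)\sum_{m\le y/d}\mu^2(m)
\]
(check it on Euler products: $\prod_{p\mid\ell}(1+p^{-s})^{-1}\cdot\prod_p(1+p^{-s})=\prod_{p\nmid\ell}(1+p^{-s})$), whose inner sum carries no coprimality condition at all. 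Applying the unconditional count to each inner sum, every error term is at most $y^{1/2}\exp(-c_1(\log y)^{3/5}(\log\log y)^{-1/5})$ because $t\mapsto t^{1/2}\exp(-c_1(\log t)^{3/5}(\log\log t)^{-1/5})$ is increasing for large $t$, so the total error is that quantity times $\#\{d\mid\ell^{\infty}:d\le y\}$; only \emph{then} does one perform the partial summation. This ordering (coprimality handled at the level of the counting function, partial summation last) is precisely what makes the bookkeeping you struggled with disappear.

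Two specific claims in your error analysis are wrong and led you to abandon the elementary route prematurely. First, $x^{1/6}$ cannot be ``absorbed into $\exp(c(\log x)^{3/5}(\log\log x)^{-1/5})$ with a smaller $c$'': $x^{1/6}=\exp(\tfrac16\log x)$ dwarfs any such factor. Second, your divergent sums $\sum_d d^{-1/2}$ arise only because you dropped the constraint $d\mid\operatorname{rad}\ell$; keeping it, $\sum_{d\mid\operatorname{rad}\ell}\mu^2(d)d^{-1/2}=\prod_{p\mid\ell}(1+p^{-1/2})\le\exp\big(O((\log x)^{1/2}/\log\log x)\big)$ for $\ell\le x^{10}$, which \emph{is} absorbable since $(\log x)^{1/2}=o\big((\log x)^{3/5}(\log\log x)^{-1/5}\big)$. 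Finally, the contour-shift fallback you end with is asserted rather than executed and cannot be run ``verbatim'': the unconditional estimate you start from is proved elementarily from the Mertens bound, not by a contour shift (a genuine contour argument for $\zeta(s)/\zeta(2s)$ requires lower bounds for $\zeta(2s)$ to the left of $\operatorname{Re}(s)=\tfrac12$, i.e.\ inside the Korobov--Vinogradov region --- a different and substantially heavier proof), and your claim that the auxiliary Euler product $\prod_{p\mid\ell}(1+p^{-s})^{-1}$ is $\ll1$ there is false for $\ell\le x^{10}$ (it can be as large as $\exp(c(\log x)^{1/2}/\log\log x)$ near $\operatorname{Re}(s)=\tfrac12$), although that particular loss would again be absorbable.
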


\begin{proof}

We first record the unconditional bound on the Mertens function, which follows from the Korobov-Vinogradov zero-free region for $\zeta(s)$:
$$ \sum_{\substack{ n\leq x  }} \mu(n) \ll x \exp(-c_2(\log x)^{\frac 35}(\log\log x)^{-\frac 15}). $$
Proceeding as in \cite[Exercise 6.2.19]{MoVa} we recover the classical estimate 
$$ \sum_{n\leq x} \mu^2(n) = \frac x{\zeta(2)} +O(x^{\frac 12}\exp(-c_1 (\log x)^{\frac 35} (\log\log x)^{-\frac 15}) ). $$
Combining this with the identity\footnote{By $d\mid \ell^{\infty}$ we mean that $d$ is a positive integer such that each of its prime factors divides $\ell$.}
$$  \sum_{\substack{n\leq x \\ (n,\ell)=1}} \mu^2(n) = \sum_{d\mid \ell^{\infty }} \lambda(d) \sum_{\substack{m\leq x/d }} \mu^2(m), $$
we obtain that
$$  \sum_{\substack{n\leq x \\ (n,\ell)=1}} \mu^2(n) = \frac{x}{\zeta(2)} \prod_{p\mid \ell} \left( 1+\frac 1p \right)^{-1} +O\Big(x^{\frac 12}\exp(-c_1 (\log x)^{\frac 35} (\log\log x)^{-\frac 15}) ) \sum_{\substack{d\mid \ell^{\infty} \\ d\leq x}} 1 \Big).  $$
The sum in the error term is easily shown to be bounded by a constant times $(\log x)^2$. The estimates \eqref{eq:squarefree} and \eqref{eq:squarefree log} follows from applying summation by parts.
\end{proof}

\begin{lemma}
\label{lemma sum over r pure}
There exists an absolute constant $c>0$ such that for $R\in \mathbb R_{\geq 3}$ we have the estimates
$$
\sum_{r > R} \frac{\mu^2(r)}{r\phi(r)} = \frac 1R +O\Bigg( \frac{1} {R^{\frac 32} \exp \big(c \frac{(\log R)^{\frac 35}}{(\log\log R)^{\frac 15}} \big)}  \Bigg);
$$
$$
\sum_{r > R} \frac{\mu^2(r)\log r}{r\phi(r)} = \frac {\log R+1}R +O\Bigg( \frac{1} {R^{\frac 32} \exp \big(c \frac{(\log R)^{\frac 35}}{(\log\log R)^{\frac 15}} \big)}  \Bigg).
$$
\end{lemma}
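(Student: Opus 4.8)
The plan is to reduce both estimates to Lemma~\ref{lemma squarefree} through a convolution identity. Write $f(r):=\mu^2(r)/(r\phi(r))$ and $g(e):=\mu^2(e)/e^2$, both multiplicative, and introduce the multiplicative function $h:=f*g^{-1}$ (Dirichlet inverse), so that $f=h*g$. Computing on prime powers gives $g^{-1}(p^k)=(-1)^kp^{-2k}$, hence $h(1)=1$ and $h(p^k)=(-1)^{k-1}/\big((p-1)p^{2k}\big)$ for $k\geq1$. Two elementary facts about $h$ will be used: the Euler product identity
\[\sum_{d\geq1}h(d)\,d=\prod_p\Big(1+\frac1{p^2-1}\Big)=\zeta(2),\]
and the absolute convergence of $\sum_{d\geq1}|h(d)|\,d^{\sigma}$ for every $\sigma<2$ (the local factor being $1+O(p^{\sigma-3})$), which upon splitting the range yields $\sum_{d>D}|h(d)|\,d^{\sigma}\ll_{\sigma,\epsilon}D^{\sigma-2+\epsilon}$, and the same with an extra factor $\log d$ inserted.

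For the first estimate I would write $\sum_{r>R}f(r)=\sum_{d\geq1}h(d)\,T(R/d)$ with $T(y):=\sum_{e>y}\mu^2(e)/e^2$, assume $R$ large (the claim being trivial otherwise), and split at $d=R/3$. For $d\leq R/3$ we have $R/d\geq3$, so Lemma~\ref{lemma squarefree} with $\ell=1$ gives $T(R/d)=\frac{d}{\zeta(2)R}+O(\mathcal E(R/d))$, where $\mathcal E(y):=y^{-3/2}\exp\!\big(-c(\log y)^{3/5}(\log\log y)^{-1/5}\big)$; the main term is $\frac1{\zeta(2)R}\sum_{d\leq R/3}h(d)d=\frac1R+O(R^{-2+\epsilon})$ by the two facts above. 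The error $\sum_{d\leq R/3}|h(d)|\,\mathcal E(R/d)$ is split once more at $d=R^{1/2}$: for $d\leq R^{1/2}$ one has $R/d\geq R^{1/2}$ and hence $\mathcal E(R/d)\ll d^{3/2}\mathcal E^\ast(R)$ with $\mathcal E^\ast(R):=R^{-3/2}\exp\!\big(-c'(\log R)^{3/5}(\log\log R)^{-1/5}\big)$ for a smaller absolute $c'$, giving a contribution $\ll\mathcal E^\ast(R)\sum_d|h(d)|d^{3/2}\ll\mathcal E^\ast(R)$; for $R^{1/2}<d\leq R/3$ the crude bound $\mathcal E(R/d)\leq(d/R)^{3/2}$ gives $\ll R^{-3/2}\sum_{d>R^{1/2}}|h(d)|d^{3/2}\ll R^{-7/4+\epsilon}$. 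The tail $d>R/3$ (where $R/d<3$) is handled by the trivial bound $T(y)\ll\min(1,1/y)$ and contributes $\ll R^{-2+\epsilon}$. As $R^{-2+\epsilon}$ and $R^{-7/4+\epsilon}$ are both $\ll\mathcal E^\ast(R)$ for $R$ large, this gives the first estimate after renaming $c'$.

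For the log-weighted sum I would use $\log(de)=\log d+\log e$ to obtain
\[\sum_{r>R}\frac{\mu^2(r)\log r}{r\phi(r)}=\sum_{d\geq1}h(d)\log d\cdot T(R/d)+\sum_{d\geq1}h(d)\cdot T_{\log}(R/d),\qquad T_{\log}(y):=\sum_{e>y}\frac{\mu^2(e)\log e}{e^2},\]
and apply both parts of Lemma~\ref{lemma squarefree}. The one substantive point is that in the combined main term the two occurrences of $\sum_{d\leq R/3}h(d)d\log d$ cancel, leaving $\frac{\log R+1}{\zeta(2)R}\sum_{d\leq R/3}h(d)d=\frac{\log R+1}{R}+O(R^{-2+\epsilon})$; the error terms are controlled exactly as for the first estimate, the additional factors $\log d\ll_\epsilon d^{\epsilon}$ being harmless.

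The only step that is not routine bookkeeping is the two-stage splitting of the sum over $d$: the split at $d\asymp R$ is forced in order to extract the clean main term via $\sum_dh(d)d=\zeta(2)$, but the Korobov--Vinogradov-quality error is then recovered only after the further split at $d\asymp R^{1/2}$, below which $\mathcal E(R/d)$ still carries essentially the full saving $\mathcal E^\ast(R)$. Everything else follows from the rapid decay of $h$ and the absolute convergence of the relevant Dirichlet series.
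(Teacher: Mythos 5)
Your proof is correct and follows essentially the same route as the paper: a Dirichlet convolution against $\mu^2(e)/e^2$ reducing the tail to Lemma \ref{lemma squarefree}, followed by a splitting of the range of $d$ to preserve the Korobov--Vinogradov-quality error term. The only (harmless) differences are that you use the exact Dirichlet inverse $h=f*g^{-1}$ rather than the paper's identity $r/\phi(r)=\sum_{d\mid r}\mu^2(d)/\phi(d)$ -- so you only need Lemma \ref{lemma squarefree} with $\ell=1$ instead of its coprimality-restricted form -- and you derive the log-weighted estimate via $\log(de)=\log d+\log e$ where the paper uses partial summation.
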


\begin{proof}
Using the convolution identity $r/\phi(r) = \sum_{d\mid r} \mu^2(d)/\phi(d)$ and applying Lemma \ref{lemma squarefree}, we have that
\begin{align*}
\sum_{r> R} \frac{\mu^2(r)}{r\phi(r)} &= \sum_{d \geq 1} \frac{\mu^2(d)}{\phi(d)}  \sum_{\substack{m > R/d }} \frac{\mu^2(dm)}{d^2m^2} \\
&=\sum_{d \leq R^{\frac 45}} \frac{\mu^2(d)}{d^2\phi(d)}  \sum_{\substack{m > R/d \\ (m,d)=1 }} \frac{\mu^2(m)}{m^2} +O\left( \frac 1{R^{\frac 85}}\right)\\
&=\frac 1{R\zeta(2)} \sum_{d \leq  R^{\frac 45}} \frac{\mu^2(d)}{d\phi(d)}\prod_{p\mid d} \left( 1+\frac 1p \right)^{-1} +O\Bigg( \frac{1} {R^{\frac 32} \exp \big(c \frac{(\log R)^{\frac 35}}{(\log\log R)^{\frac 15}} \big)} \sum_{d \leq R^{\frac 45}} \frac{\mu^2(d)}{d^{\frac 12}\phi(d)} + \frac 1{R^{\frac 85}} \Bigg) \\
&= \frac 1{R\zeta(2)} \sum_{d \geq 1}  \frac{\mu^2(d)}{d\phi(d)}\prod_{p\mid d} \left( 1+\frac 1p \right)^{-1}+O\Bigg( \frac{1} {R^{\frac 32} \exp \big(c \frac{(\log R)^{\frac 35}}{(\log\log R)^{\frac 15}} \big)}  \Bigg).
\end{align*}  
The first result follows from a straightforward computation, and the second from a summation by parts.
\end{proof}

\begin{lemma}
\label{lemma sum over r coprime to a}
There exists an absolute constant $c>0$ such that for $a\in \mathbb Z_{\neq 0}$ and $R\in \mathbb R_{\geq 3}$ we have the estimates
$$
\sum_{\substack{r\leq R \\ (r,a)=1}} \frac{\mu^2(r)}{r\phi(r)} = \frac{|a|}{\phi(|a|)}C_1(a)-\frac{\phi(|a|)}{|a|}\frac 1R +O\Bigg( \frac{\prod_{p\mid a} \big( 1+\frac 1{p^{\frac 13}} \big)} {R^{\frac 32} \exp \big(c \frac{(\log R)^{\frac 35}}{(\log\log R)^{\frac 15}} \big)}  \Bigg);
$$
$$
\sum_{\substack{r\leq R \\ (r,a)=1}} \frac{\mu^2(r)\log r}{r\phi(r)} = \frac{|a|}{\phi(|a|)}C_1(a)\sum_{p\nmid a} \frac{\log p}{p^2-p+1}-\frac{\phi(|a|)}{|a|}\frac {\log R+1}R +O\Bigg( \frac{\prod_{p\mid a} \big( 1+\frac 1{p^{\frac 13}} \big)} {R^{\frac 32} \exp \big(c \frac{(\log R)^{\frac 35}}{(\log\log R)^{\frac 15}} \big)}  \Bigg).
$$
\end{lemma}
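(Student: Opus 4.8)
The plan is to reduce everything to Lemma \ref{lemma sum over r pure} by a standard convolution (dualisation) argument that separates the variable $r$ into a part coprime to $a$ and a part supported on primes dividing $a$. Write $a^* := \prod_{p\mid a} p$ for the radical of $a$. Then $\mathbf 1_{(r,a)=1} = \sum_{d \mid (r,a^*)} \mu(d)$, so that
\begin{align*}
\sum_{\substack{r\leq R\\(r,a)=1}} \frac{\mu^2(r)}{r\phi(r)}
&= \sum_{d\mid a^*} \mu(d) \sum_{\substack{r\leq R\\ d\mid r}} \frac{\mu^2(r)}{r\phi(r)}
= \sum_{d\mid a^*} \frac{\mu(d)\mu^2(d)}{d\phi(d)} \sum_{\substack{m\leq R/d\\ (m,d)=1}} \frac{\mu^2(m)}{m\phi(m)}.
\end{align*}
The inner sum is $\sum_{m\leq R/d}\mu^2(m)/(m\phi(m)) - \sum_{\substack{m\leq R/d\\(m,d)>1}}\mu^2(m)/(m\phi(m))$, but it is cleaner to iterate: apply $\mathbf 1_{(m,d)=1} = \sum_{e\mid(m,d)}\mu(e)$ again, so the inner sum becomes $\sum_{e\mid d}\frac{\mu(e)\mu^2(e)}{e\phi(e)}\sum_{k\leq R/(de)}\mu^2(k)/(k\phi(k))$ — since $d\mid a^*$ is squarefree this terminates quickly and keeps all moduli squarefree and coprime. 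The net effect is a finite sum (over divisors of $a^*$, of which there are $2^{\omega(a)}$) of tail-complements of the sums handled in Lemma \ref{lemma sum over r pure}.

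Concretely, I would substitute $\sum_{k\leq y}\mu^2(k)/(k\phi(k)) = \prod_p(1+\frac 1{p(p-1)}) - \sum_{k>y}\mu^2(k)/(k\phi(k))$ and use Lemma \ref{lemma sum over r pure} for the tail $\sum_{k>y}\mu^2(k)/(k\phi(k)) = 1/y + O(\cdots)$ with $y = R/(de)$. The main term then factors as an Euler product: collecting the contribution of all $d,e\mid a^*$ of the constant $\prod_p(1+\frac 1{p(p-1)})$ gives
$$\Big(\prod_p\big(1+\tfrac 1{p(p-1)}\big)\Big)\prod_{p\mid a}\Big(1 - \tfrac 1{p(p-1)} \cdot\big(1+\tfrac 1{p(p-1)}\big)^{-1}\Big)\cdot(\text{double-counting correction}),$$
which, after simplifying the local factor at each $p\mid a$ to $\big(1-\frac 1{p^2-p+1}\big)$ and recognising $\prod_p\big(1+\frac 1{p(p-1)}\big) = \frac{\zeta(2)\zeta(3)}{\zeta(6)}$, is exactly $\frac{|a|}{\phi(|a|)}C_1(a)$. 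The contribution of the $-1/y = -de/R$ terms sums to $-\frac 1R\sum_{d,e\mid a^*}\frac{\mu(d)\mu(e)\, de}{d\phi(d)e\phi(e)}\cdot(\dots) = -\frac 1R\prod_{p\mid a}(1-\frac 1{p-1})^{\pm} = -\frac{\phi(|a|)}{|a|}\frac 1R$ after the analogous local computation. For the error term, each application of Lemma \ref{lemma sum over r pure} at $y=R/(de)$ contributes $O\big((de/R)^{3/2}\exp(-c(\log(R/de))^{3/5}\cdots)\big)$; summing $\sum_{d,e\mid a^*}(de)^{3/2}/(d\phi(d)e\phi(e))$ produces the factor $\prod_{p\mid a}(1+\frac 1{p^{1/2}(p-1)}+\cdots) \ll \prod_{p\mid a}(1+p^{-1/3})$ claimed (the exponent $1/3$ rather than $1/2$ absorbs the mild loss from $\log(R/de)$ versus $\log R$ inside the exponential, using $R\geq 3$ and, if $de$ is large, the trivial bound), yielding the stated error.

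The second identity, with the extra $\log r$, is obtained either by differentiating the first in a formal parameter or — more honestly — by repeating the argument with the substitution $\log r = \log m + \log d + \log e$ at each stage: the $\log m$ piece invokes the second estimate of Lemma \ref{lemma sum over r pure} ($\sum_{k>y}\mu^2(k)\log k/(k\phi(k)) = (\log y+1)/y+O(\cdots)$) and the $\log(de)$ pieces feed back into the first estimate, giving an additional factor $\sum_{p\mid a}(\dots)$; after collecting Euler factors the main term is $\frac{|a|}{\phi(|a|)}C_1(a)\sum_{p\nmid a}\frac{\log p}{p^2-p+1}$ (the logarithmic derivative of the local factors $1-\frac 1{p^2-p+1}$ over $p\nmid a$) minus $\frac{\phi(|a|)}{|a|}\frac{\log R+1}{R}$, and the error is of the same shape as before. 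The main obstacle is purely bookkeeping: verifying that the doubly-indexed Euler-product manipulations collapse to precisely the local factors $1-\frac 1{p^2-p+1}$ and $\frac{p^2\log p}{(p-1)(p^2-p+1)}$ appearing in $C_1(a)$ and $C_2(a)$, and that the loss $\log(R/de)$ inside the subexponential factor can be traded for the harmless $p^{-1/3}$ in the error's Euler product; there is no analytic difficulty beyond Lemma \ref{lemma sum over r pure} itself.
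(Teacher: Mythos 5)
Your overall strategy is the same as the paper's: sieve the coprimality condition into a convolution supported on integers built from the primes dividing $a$, reduce each piece to Lemma \ref{lemma sum over r pure} (writing the truncated sum as complete sum minus tail), and evaluate the resulting Euler products, with the error controlled by splitting the divisor sum according to the size of the shift. The main terms you arrive at are correct: $\prod_{p\mid a}\bigl(1+\tfrac 1{p(p-1)}\bigr)^{-1}=\prod_{p\mid a}\bigl(1-\tfrac 1{p^2-p+1}\bigr)$ does convert $\tfrac{\zeta(2)\zeta(3)}{\zeta(6)}$ into $\tfrac{|a|}{\phi(|a|)}C_1(a)$, and the $\log r$ variant does produce $\sum_{p\nmid a}\tfrac{\log p}{p^2-p+1}$.

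There is, however, one concrete misstep: the claim that the M\"obius iteration over $d,e\mid a^*$ \emph{terminates} after two levels. It does not. After writing $r=dm$ you correctly get $\sum_{m\le R/d,\,(m,d)=1}\mu^2(m)/(m\phi(m))$, but applying M\"obius again and writing $m=ek$ reproduces a coprimality condition $(k,e)=1$ on the innermost variable (since $\mu^2(ek)=\mu^2(e)\mu^2(k)\mathbf 1_{(e,k)=1}$), so you face an infinite regress, not a finite one. Dropping that condition, as your displayed innermost sum does, changes the local factors: the tell-tale sign is your coefficient of $-1/R$, which you write as $\prod_{p\mid a}\bigl(1-\tfrac 1{p-1}\bigr)^{\pm}$ --- this vanishes when $2\mid a$ and cannot equal $\phi(|a|)/|a|=\prod_{p\mid a}(1-\tfrac 1p)$. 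The clean fix (and what the paper does in a single step) is the exact convolution identity
\begin{equation*}
\sum_{\substack{r>R\\(r,a)=1}}\frac{\mu^2(r)}{r\phi(r)}=\sum_{d\mid a^{\infty}}\frac{\lambda(d)}{d\prod_{p^{\nu}\parallel d}(p-1)^{\nu}}\sum_{m>R/d}\frac{\mu^2(m)}{m\phi(m)},
\end{equation*}
where the sum runs over \emph{all} $d$ whose prime factors divide $a$ (not just squarefree ones) and the Liouville weights are exactly the inverse of the local factor $1+\tfrac 1{p(p-1)}$; summing these weights gives $\sum_{\nu\ge 0}(-1)^{\nu}(p-1)^{-\nu}=1-\tfrac 1p$ at each $p\mid a$, hence the correct $\phi(|a|)/|a|$ in front of $1/R$. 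With this replacement your treatment of the main terms and of the error (splitting $d\le R^{1/2}$ from $d>R^{1/2}$ to trade $\log(R/d)$ for $\log R$ inside the subexponential factor, and absorbing the divisor sum into $\prod_{p\mid a}(1+p^{-1/3})$) goes through as you describe.
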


\begin{proof}

We only prove the first of these estimates. Write $\eta_c(R):= \exp \big(c (\log R)^{\frac 35}/(\log\log R)^{\frac 15} \big)$. We have the identity 
$$\sum_{\substack{r > R \\ (r,a)=1}} \frac{\mu^2(r)}{r\phi(r)} = \sum_{d \mid a^{\infty}} \frac{\lambda(d)}{ d \prod_{p^{\nu}\parallel d} (p-1)^{\nu} } \sum_{\substack{m > R/d }} \frac{\mu^2(m)}{m\phi(m)},$$
which combined with Lemma \ref{lemma sum over r pure} gives that for some $c>0$,
\begin{align*}
\sum_{\substack{r > R \\ (r,a)=1}} \frac{\mu^2(r)}{r\phi(r)} &= \frac {1}R \sum_{d \mid a^{\infty}} \frac{\lambda(d)}{  \prod_{p^{\nu}\parallel d} (p-1)^{\nu} } +O\Big( \sum_{\substack{d \mid a^{\infty} \\ d\leq R^{\frac 12}}}  \frac{ \prod_{p\mid d} \left(1+\frac 1p \right)}{d^{\frac 12}R^{\frac 32} \eta_c(R)} +\sum_{\substack{d \mid a^{\infty} \\ R^{\frac 12}<d\leq R}}  \frac{ \prod_{p\mid d} \left(1+\frac 1p \right)}{ d^{\frac 12}R^{\frac 32} }\Big) \\
&= \frac{\phi(|a|)}{|a|}\frac 1R +O\bigg(  \frac{\prod_{p\mid a} \big( 1+\frac 1{p^{\frac 13}} \big)}{R^{\frac 32}\eta_c(R)}\bigg).
\end{align*}
The proof follows from a straightforward computation.
\end{proof}

\begin{lemma}
\label{lemma sum over r with gcd}
There exists an absolute constant $c>0$ such that if $a\in \mathbb Z_{\neq 0}$ and $R\in \mathbb R_{\geq 9}$ are such that $a_R:= \prod_{\substack{p \mid a \\ p\leq R}}p \leq R/\log R$, then we have
$$
\sum_{\substack{r\leq R }} \frac{\mu^2(r) \mu((r,a))\phi((r,a))}{r\phi(r)} = C_1(a)-\frac {\epsilon_{a=\pm 1}}R +O\Bigg( \frac{a_R^{\frac 12} \prod_{p\mid a} \left( 1+ \frac 2{p^{\frac 13}} \right)} {R^{\frac 32} \exp \big(c \frac{(\log (R/a_R))^{\frac 35}}{(\log\log (R/a_R))^{\frac 15}} \big)}  \Bigg);
$$
$$
\sum_{\substack{r\leq R }} \frac{\mu^2(r) \mu((r,a))\phi((r,a))}{r\phi(r)} \log r = (\gamma-1)C_1(a)-\frac {\epsilon_{a=\pm 1}(\log R+1)}R +O\Bigg( \frac{a_R^{\frac 12} \prod_{p\mid a} \left( 1+ \frac 2{p^{\frac 13}} \right)} {R^{\frac 32} \exp \big(c \frac{(\log (R/a_R))^{\frac 35}}{(\log\log (R/a_R))^{\frac 15}} \big)}  \Bigg);
$$
\end{lemma}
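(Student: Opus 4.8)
The plan is to reduce the sum with the gcd-twisted multiplicative function to the two previous lemmas by a convolution/decomposition argument. Write $f(r):=\mu^2(r)\mu((r,a))\phi((r,a))/(r\phi(r))$. Since $f$ is multiplicative, at a prime $p$ we have $f(p)=-1/(p(p-1))$ if $p\mid a$ and $f(p)=1/(p(p-1))$ if $p\nmid a$. Thus $f(r)=g(r)\,\mu^2(r)/(r\phi(r))$ where $g$ is the completely ``sign'' factor supported on squarefree $r$ with $g(p)=-1$ for $p\mid a$ and $g(p)=1$ otherwise; equivalently, writing $r=de$ with $d\mid a_R^\infty$ squarefree and $(e,a)=1$, we get $f(r)=\mu(d)\,\mu^2(r)/(r\phi(r))$. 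Hence
\[
\sum_{r\leq R} f(r)=\sum_{\substack{d\mid a_R \\ d \text{ squarefree}}}\frac{\mu(d)}{d\phi(d)}\sum_{\substack{e\leq R/d \\ (e,a)=1}}\frac{\mu^2(e)}{e\phi(e)},
\]
and similarly for the weighted sum, using $\log(de)=\log d+\log e$ to split into two pieces.

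Next I would insert the asymptotics of Lemma \ref{lemma sum over r coprime to a} for the inner sum over $e$. The main term $\frac{|a|}{\phi(|a|)}C_1(a)$ is independent of $d$, so summing $\mu(d)/(d\phi(d))$ over squarefree $d\mid a_R$ produces $\prod_{p\mid a,\,p\leq R}(1-\tfrac1{p(p-1)})=\prod_{p\mid a, p\le R}\frac{p^2-p-1}{\ldots}$; I must check this Euler product combined with $\frac{|a|}{\phi(|a|)}C_1(a)$ collapses exactly to $C_1(a)$ — it does, since $\frac{1}{p-1}(1-\frac{1}{p(p-1)})=\frac1{p-1}\cdot\frac{p^2-p-1}{p(p-1)}$ reconstructs the local factor $1-\frac1{p^2-p+1}$ times $\frac{p}{p-1}$, up to the tail $p\nmid a_R$ which is empty when $a_R$ already contains all prime factors of $a$ below $R$; primes $p\mid a$ with $p>R$ contribute trivially here and are absorbed in the error. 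The secondary term $-\frac{\phi(|a|)}{|a|}\frac1{R/d}=-\frac{\phi(|a|)}{|a|}\frac dR$ gets multiplied by $\mu(d)/(d\phi(d))$ and summed, giving $-\frac{\phi(|a|)}{|a|R}\sum_{d\mid a_R}\frac{\mu(d)}{\phi(d)}=-\frac{\phi(|a|)}{|a|R}\prod_{p\mid a,p\le R}(1-\tfrac1{p-1})$; this product vanishes unless $a_R=1$, i.e. unless $a=\pm1$, yielding precisely the stated $-\epsilon_{a=\pm1}/R$. For the $\log$-weighted version, the $\log e$ piece runs through the identical computation and produces $\frac{|a|}{\phi(|a|)}C_1(a)\sum_{p\nmid a}\frac{\log p}{p^2-p+1}$ times the collapsing product, while the $\log d$ piece is an extra term $\sum_{d\mid a_R}\frac{\mu(d)\log d}{d\phi(d)}\cdot\frac{|a|}{\phi(|a|)}C_1(a)$ which I would rewrite via logarithmic differentiation of the Euler product; combining the two contributions and using the definition of $C_1(a)$ and the constant hidden in $C_2(a)$, everything should reorganize into $(\gamma-1)C_1(a)$. (The appearance of $\gamma-1$ rather than the more elaborate $C_2(a)$ here is because $C_2(a)$ records the full secondary term in the \emph{partial-sum} asymptotic, whereas these lemmas only produce the tail corrections; the $\gamma-1$ is the residue-type constant coming from $\sum_{n\le x}\mu^2(n)\log n/(n\phi(n))$ through Lemma \ref{lemma sum over r pure}.)

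Finally I would control the error terms. Each inner application of Lemma \ref{lemma sum over r coprime to a} with argument $R/d$ contributes $\prod_{p\mid a}(1+p^{-1/3})\big/\big((R/d)^{3/2}\eta_c(R/d)\big)$ with $\eta_c(y)=\exp(c(\log y)^{3/5}(\log\log y)^{-1/5})$; multiplying by $|\mu(d)|/(d\phi(d))\le 1/d$ and summing over $d\mid a_R$ with $d\le R$, the worst $d$ is $d\asymp a_R$, and using $d\le a_R\le R/\log R$ one has $(R/d)^{3/2}\ge (R/a_R)^{3/2}$ and $\eta_c(R/d)\ge \eta_c(R/a_R)$, while $\sum_{d\mid a_R}d^{1/2}\big/(d\phi(d))\ll \prod_{p\mid a}(1+p^{-1/2})$, so the total is $\ll a_R^{1/2}\prod_{p\mid a}(1+2p^{-1/3})\big/\big(R^{3/2}\eta_c(R/a_R)\big)$, matching the claimed bound (the factor $2$ and the $p^{1/3}$ exponent give slack to absorb the product $\prod(1+p^{-1/3})$ from the lemma together with the sum over $d$). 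One must also verify that truncating $d\mid a^\infty$ to $d\mid a_R$ (dropping $d$ divisible by primes $p\mid a$ with $p>R$, and dropping non-squarefree $d$) is harmless: such $d$ either force $r>R$ (no contribution) or are already excluded by $\mu^2(r)$. The main obstacle is bookkeeping: ensuring the Euler-product collapses are exact (so the main terms are clean $C_1(a)$ and $(\gamma-1)C_1(a)$ with no stray local factors at primes $p\mid a$ exceeding $R$), and that the $\log d$ contribution in the second estimate genuinely combines with the $\sum_{p\nmid a}$ sum to give the advertised constant rather than something $a$-dependent beyond $C_1(a)$.
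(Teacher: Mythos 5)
Your overall strategy is the paper's: factor each squarefree $r$ as $r=de$ with $d=(r,a)$ and $(e,a)=1$, reduce to Lemma \ref{lemma sum over r coprime to a}, and collapse the resulting Euler products. But there is a genuine computational error at the first step which breaks both main terms. At a prime $p\mid a$ one has
$$f(p)=\frac{\mu^2(p)\,\mu(p)\,\phi(p)}{p\,\phi(p)}=-\frac 1p,$$
not $-1/(p(p-1))$ as you claim: you have dropped the factor $\phi((r,a))$ from the numerator. Consequently the correct decomposition is
$$\sum_{r\leq R} f(r)=\sum_{d\mid a}\frac{\mu(d)}{d}\sum_{\substack{e\leq R/d\\ (e,a)=1}}\frac{\mu^2(e)}{e\phi(e)},$$
with coefficient $\mu(d)/d$, not $\mu(d)/(d\phi(d))$. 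With your coefficient the main term becomes $\frac{|a|}{\phi(|a|)}C_1(a)\prod_{p\mid a}\bigl(1-\frac 1{p(p-1)}\bigr)$, which is \emph{not} $C_1(a)$ (try $a=3$: one gets $\frac 54 C_1(3)$); your check of the collapse contains an algebra slip, since $\frac 1{p-1}\cdot\frac{p^2-p-1}{p(p-1)}\neq\frac{p}{p-1}\bigl(1-\frac1{p^2-p+1}\bigr)=\frac{p^2}{p^2-p+1}$. Likewise your secondary term is $-\frac{\phi(|a|)}{|a|R}\prod_{p\mid a_R}\bigl(1-\frac1{p-1}\bigr)$, and this product vanishes only when $2\mid a$ (the factor at $p=2$ is $0$), not ``unless $a_R=1$''; so for odd $a\neq\pm1$ you would get a spurious $1/R$ term. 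With the correct coefficient both collapses are immediate: $\sum_{d\mid a}\mu(d)/d=\phi(|a|)/|a|$ cancels the prefactor $|a|/\phi(|a|)$ to give exactly $C_1(a)$, and the $1/R$ coefficient is $\sum_{d\mid a_R}\mu(d)=\epsilon_{a_R=1}$, which is what produces $-\epsilon_{a=\pm1}/R$.

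The error-term bookkeeping at the end is in the right spirit and would go through once the coefficient is fixed (the sum over $d\mid a_R$ of $d^{3/2}\cdot\frac{|\mu(d)|}{d}$ is what forces the $a_R^{1/2}$ in the numerator). The log-weighted case, however, is left at the level of ``everything should reorganize'': there the $\log d$ and $\log e$ contributions must be tracked separately, and the $1/R$-level pieces $\sum_{d\mid a_R}\mu(d)\log d=-\Lambda(a_R)$ arising from each must be seen to cancel; this is a genuinely delicate cancellation that your sketch does not carry out, and it cannot be waved through given that the constant-level outcome depends on combining $\sum_{p\nmid a}\frac{\log p}{p^2-p+1}$ with $-\sum_{p\mid a}\frac{\log p}{p-1}$.
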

\begin{proof}
The first of these estimates follows from writing
$$ \sum_{\substack{r\leq R }} \frac{\mu^2(r) \mu((r,a))\phi((r,a))}{r\phi(r)} = \sum_{d\mid a} \frac{\mu(d)}d \sum_{\substack{ m\leq R/d \\ (m,a)=1}} \frac{\mu^2(m) }{m\phi(m)},  $$
applying Lemma \ref{lemma sum over r coprime to a} and performing a straightforward calculation. 

\end{proof}

\section{The sum over all moduli}

\label{section all moduli}

In order to prove Theorem \ref{theorem main full range}, we need to understand the quantity $\rho_R^*(x;q,a)$ for more moderate values of $q$.

\begin{lemma}

\label{lemma whole range of q}

Uniformly for $0 < |a| \leq x^{\frac 12}$ and $R\leq x^{\frac 12}$ we have 
$$ \sum_{q\leq x} \rho_R^*(x;q,a) = x \sum_{r\leq R} \frac{\mu^2(r)\mu((r,a))\phi((r,a))}{r\phi(r)} \left(\log \frac{x}{r^2}+2\gamma-1\right) +O(Rx^{\frac 12}+|a|(\log x)^2).$$
\label{lemma average of rho q<x}
\end{lemma}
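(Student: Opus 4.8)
The plan is to unfold $\rho_R^*(x;q,a)$, swap the roles of the modulus and the quotient (as in the proof of Lemma \ref{lemma big values of q}), and then run the Dirichlet hyperbola method. Note that a direct application of \eqref{equation first estimate for rho} to every $q\le x$ would only give an error of size $O(Rx)$, which is why one must argue differently for large $q$. Writing $n=a+qs$, the conditions $n\equiv a\bmod q$, $n>a$, $n\le x$, $q\le x$ become $s\ge 1$, $qs\le x-a$, $q\le x$; for $a>0$ the last one is automatic, while for $a<0$ the extra contribution necessarily has $s=1$ and $x<q\le x+|a|$, hence is $O(R|a|)=O(Rx^{1/2})$ by the trivial bound $|F_R(n)|\ll R$. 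Thus, up to an admissible error,
$$\sum_{q\le x}\rho_R^*(x;q,a)=\sum_{qs\le y}F_R(a+qs),\qquad y:=x-a .$$

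Next I would apply the hyperbola identity with the split at $\sqrt y$: since $F_R(a+qs)$ depends only on the product $qs$,
$$\sum_{qs\le y}F_R(a+qs)=2\sum_{q\le\sqrt y}\ \sum_{1\le s\le y/q}F_R(a+qs)\ -\ \sum_{q\le\sqrt y}\ \sum_{1\le s\le\sqrt y}F_R(a+qs).$$
For fixed $q\le\sqrt y$, the inner sum $\sum_{1\le s\le T}F_R(a+qs)$ is a sum of $F_R$ over the progression $a\bmod q$, so Lemma \ref{lemma:additive character sum2} (applied with lower endpoint $a\ge 0$ and upper endpoint $a+q\lfloor T\rfloor$) gives $\sum_{1\le s\le T}F_R(a+qs)=\lfloor T\rfloor\,\widetilde G(q)+O(R)$, where $\widetilde G(q):=\sum_{r\le R,\ r\mid q}\frac{\mu^2(r)\mu((r,a))\phi((r,a))}{\phi(r)}$. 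Taking $T=y/q$ and $T=\sqrt y$, the $O(R)$ terms accumulate to $O(R\sqrt y)=O(Rx^{1/2})$, and we are left with $2\sum_{q\le\sqrt y}\lfloor y/q\rfloor\widetilde G(q)-\lfloor\sqrt y\rfloor\sum_{q\le\sqrt y}\widetilde G(q)$.

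Interchanging the summations over $q$ and over $r\mid q$, this equals $\sum_{r\le R}g(r)\,B_r$, where $g(r):=\frac{\mu^2(r)\mu((r,a))\phi((r,a))}{\phi(r)}$ and $B_r:=2\sum_{k\le\sqrt y/r}\lfloor y/(rk)\rfloor-\lfloor\sqrt y\rfloor\lfloor\sqrt y/r\rfloor$. The quantity $B_r$ is precisely of the type that arises in the Dirichlet divisor problem, and an elementary computation (using $\sum_{k\le Z}1/k=\log Z+\gamma+O(1/Z)$ and $\lfloor t\rfloor=t+O(1)$) shows $B_r=\frac{y}{r}\bigl(\log\tfrac{y}{r^2}+2\gamma-1\bigr)+O(\sqrt y)$, uniformly for $r\le R$; the range $r>\sqrt y$, where $B_r=0$, is harmless because $R\le x^{1/2}$ and $y\ge x/2$ force $y/r^2\asymp 1$, hence $\tfrac{y}{r}\bigl(\log\tfrac{y}{r^2}+2\gamma-1\bigr)\ll\sqrt y$ there as well. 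Summing over $r$, the error is $O\bigl(\sqrt y\sum_{r\le R}|g(r)|\bigr)=O(Rx^{1/2})$; the only input needed here is the crude but decisive bound $|g(r)|\le 1$, so that $\sum_{r\le R}|g(r)|\le R$. Finally, replacing $y$ by $x$ inside $\log\tfrac{y}{r^2}+2\gamma-1$ and in the prefactor $y$ changes the main term by $O\bigl(|a|\log x\cdot\sum_{r\le R}\tfrac{|g(r)|}{r}\bigr)=O(|a|(\log x)^2)$, since $\sum_{r\le R}\tfrac{|g(r)|}{r}\le\sum_{r\le R}\tfrac1r\ll\log x$. This yields the asserted formula.

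The error bookkeeping is routine but must be carried out with some care; the genuine nuisance is the case $a<0$, for which Lemma \ref{lemma:additive character sum2} cannot be quoted directly (its lower endpoint is required to be $\ge 0$, whereas the progression $a+q,a+2q,\dots$ begins below $0$). I would handle this by splitting each inner sum at $n=0$: apply Lemma \ref{lemma:additive character sum2} with lower endpoint $0$ to the part with $n>0$, and treat the remaining, finitely many, terms with $a<n\le 0$ by the symmetry $F_R(-n)=F_R(n)$ together with a further application of Lemma \ref{lemma:additive character sum2} to the reflected short progression. The two boundary contributions combine cleanly thanks to $a+|a|=0$, and the accumulated extra error is again $O\bigl(Rx^{1/2}+|a|\log x\sum_{r\le R}\tfrac{|g(r)|}{r}\bigr)=O(Rx^{1/2}+|a|(\log x)^2)$. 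Alternatively, one may observe that Lemma \ref{lemma:additive character sum} — from which Lemma \ref{lemma:additive character sum2} is derived — is at bottom a statement about finite geometric sums and remains valid for any integer lower endpoint, which removes the difficulty at once.
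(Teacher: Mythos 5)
Your argument is correct and is essentially the paper's own proof in a slightly different packaging: the paper splits at $q=x^{1/2}$, treats $q\le x^{1/2}$ directly via Lemma \ref{lemma:additive character sum2} and reuses Lemma \ref{lemma big values of q}~(i) (i.e.\ the divisor switch) for $q>x^{1/2}$, which is exactly your hyperbola decomposition written in its ``L-shaped'' rather than symmetric $2\sum-\sum\sum$ form, and the harmonic-sum evaluation there is your computation of $B_r$. Your treatment of the $a<0$ endpoint is, if anything, more careful than the paper's (which simply takes $y=a_+$ in Lemma \ref{lemma:additive character sum2}), and your observation that Lemma \ref{lemma:additive character sum} is insensitive to the sign of the lower endpoint is the right way to justify it.
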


\begin{proof}

For those $q$ in the interval $(x^{\frac 12},x]$, we apply Lemma \ref{lemma big values of q} (i) to obtain that
\begin{multline*} \sum_{x^{\frac 12}<q\leq x} \rho_R^*(x;q,a) =  x\sum_{r\leq R}\frac{\mu^2(r)\mu((r,a))\phi((r,a))}{r\phi(r)} \sum_{t\leq x^{\frac 12}/r} \frac 1t \left( 1- \frac{t}{x^{\frac 12}/r} \right) \\+ O(Rx^{\frac 12} + |a|(\log x)^2). \end{multline*}

As for the remaining values of $q$, we take $y=a_+:=\max\{0,a\}$ in Lemma \ref{lemma:additive character sum2} and obtain 
\begin{align*} \sum_{q\leq x^{\frac 12}} \rho_R^*(x;q,a) 
&=  \sum_{q\leq x^{\frac 12}}\frac {x-a_+}q \sum_{\substack{r\leq R \\ r\mid q}} \frac{\mu^2(r)\mu((r,a))\phi((r,a))}{\phi(r)} +O(Rx^{\frac 12})\\
&= x\sum_{r\leq R}\frac{\mu^2(r)\mu((r,a))\phi((r,a))}{r\phi(r)} \sum_{t\leq x^{\frac 12}/r} \frac 1t +O(Rx^{\frac 12}+|a|(\log x)^2).
\end{align*}

The desired estimate follows from applying the standard estimate on the harmonic sum.

\end{proof}

In the following lemma we show that the average of $\psi^*(x;q,a)$ is very small when $(q,a)>1$. 

\begin{lemma}
\label{lemma psi negligible}
We have that

$$ \sum_{\substack{q\leq x \\ (q,a)>1}} \psi^*(x;q,a) \ll  x^{\frac 12}\log |a|. $$

\end{lemma}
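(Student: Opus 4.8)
The plan is to exploit how rigid the conditions $\Lambda(n)\neq 0$, $n\equiv a\pmod q$, $(q,a)>1$ are when taken together. First I would observe: if $(q,a)>1$, pick a prime $p\mid(q,a)$; then $p\mid q$ and $p\mid a$, so $n\equiv a\pmod q$ forces $p\mid n$, and since $\Lambda(n)\neq 0$ means $n$ is a prime power we get $n=p^{e}$ for some $e\geq 1$, with $p\mid a$. Moreover the condition $n>a$ rules out $n=a$, hence $n-a\neq 0$. (If $a>0$ then $a<n\leq x$ already forces $a<x$, so one first discards the trivial case $a>0$, $a\geq x$, in which the whole sum vanishes.)

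Next I would swap the order of summation. Writing the left-hand side as $\sum_{n\leq x,\,n>a}\Lambda(n)\,\#\{q\leq x:\ (q,a)>1,\ q\mid n-a\}$, using the observation above to restrict the outer sum to prime powers $n=p^{e}\neq a$ with $p\mid a$, and then dropping the conditions $(q,a)>1$ and $q\leq x$ on the inner set (which only enlarges it), one gets
\[
\sum_{\substack{q\leq x\\ (q,a)>1}}\psi^{*}(x;q,a)\ \leq\ \sum_{p\mid a}\log p\sum_{\substack{e\geq 1\\ p^{e}\leq x,\ p^{e}\neq a}}d\big(|p^{e}-a|\big),
\]
since the number of $q$ dividing a fixed nonzero integer $N$ is exactly $d(|N|)$.

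To finish, the number of $e$ with $p^{e}\leq x$ is at most $\log x/\log p$, and since $|p^{e}-a|\leq x+|a|\ll x$ in the remaining range, the classical bound $d(m)\ll_{\varepsilon}m^{\varepsilon}$ gives $d(|p^{e}-a|)\ll_{\varepsilon}x^{\varepsilon}$. Hence the inner sum over $e$ is $\ll_{\varepsilon}x^{\varepsilon}\log x/\log p$, and summing over $p\mid a$ yields $\ll_{\varepsilon}x^{\varepsilon}(\log x)\,\omega(a)$. Using $\omega(a)\ll\log|a|$ together with $x^{\varepsilon}\log x\ll x^{1/2}$ (choose $\varepsilon<1/2$) this is $\ll x^{1/2}\log|a|$, as claimed.

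The only substantive point is the structural reduction of the first step: once the relevant $n$ are known to be prime powers of primes dividing $a$, controlling the set of admissible moduli is a bare divisor count and everything else is routine. The mild caveat is that the crude estimate $d(|p^{e}-a|)\ll_{\varepsilon}x^{\varepsilon}$ — and hence this argument as written — is only harmless when $|a|$ is at most a fixed power of $x$, which is the only regime in which the lemma is applied (the case $a>0$, $a\geq x$ being trivial as noted); if one wanted a statement fully uniform in $a<0$ one would instead have to argue, e.g., via the order of $p$ modulo the prime‑to‑$p$ part of $q$, but this is not needed here.
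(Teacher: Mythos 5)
Your argument is correct (under the standing assumption that $|a|$ is at most a fixed power of $x$, which you flag explicitly, which is implicit in the paper's own proof, and which holds in every application of the lemma), but it takes a genuinely different route. The paper splits the sum at $q=x^{1/2}$, applies its usual divisor-switching device $n=a+qs$ to convert the moduli $q>x^{1/2}$ into complementary divisors $s\le x^{1/2}-ax^{-1/2}$, and then bounds each of the resulting $O(x^{1/2})$ terms individually by $\sum_{p^{\nu}\parallel a}\nu\log p=\log|a|$, using only that the surviving $n$ are prime powers $p^{e}$ with $p\mid a$. You instead interchange the order of summation completely: once $n=p^{e}$ with $p\mid a$ and $n-a\neq 0$, the admissible moduli are divisors of the nonzero integer $p^{e}-a$, so their number is at most $d(|p^{e}-a|)\ll_{\varepsilon}x^{\varepsilon}$, and summing over the $O(\log x/\log p)$ exponents and the $\omega(a)$ primes gives $O_{\varepsilon}(\omega(a)\,x^{\varepsilon}\log x)$, which is in fact sharper than the stated bound. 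Your route has the concrete advantage of not resting on a per-modulus estimate: the paper's intermediate inequality is not literally valid term by term (for instance $\psi^{*}(x;2,4)\asymp\log x$ exceeds $2\log 2$, since every power $2^{e}>4$ lies in the progression $4\bmod 2$), and it is precisely the divisor count over $q\mid p^{e}-a$ that controls the accumulation of such terms across moduli; so your argument is, if anything, the more robust of the two. The price, as you note, is the loss of uniformity for $|a|$ super-polynomial in $x$, which is irrelevant in this paper.
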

\begin{proof}
We write 
\begin{align*}
\sum_{\substack{q\leq x \\ (q,a)>1}} \psi^*(x;q,a) &\leq  \sum_{\substack{q\leq x^{\frac 12} \\ (q,a)>1}} \psi^*(x;q,a) + \sum_{\substack{s\leq x^{\frac 12} -  ax^{-\frac 12} }} \sum_{\substack{ n\leq x \\ n\equiv a \bmod s \\ (n,a)>1 }} \Lambda(n) \\
&\ll  \sum_{q\leq x^{\frac 12}}\sum_{p^{\nu} \parallel a } \nu\log p+ \sum_{s\leq x^{\frac 12}} \sum_{p^{\nu} \parallel a } \nu\log p 
\leq  2 x^{\frac 12} \log |a|. 
\end{align*}

\end{proof}

We are now ready to estimate the average of $\psi^*(x;q,a)-\rho_R^*(x;q,a)$ over $q\leq x/M$. 

\begin{proposition}
\label{proposition unevaluated r sum}
Fix $A,B\geq 1$ and $0<\lambda <1/4$. We have for $0<|a|\leq x^{\lambda}$, $1\leq M \leq R \leq x^{\frac 12}$ and $M\leq (\log x)^A$ that

\begin{multline*} \sum_{q\leq \frac xM} (\psi^*(x;q,a)-\rho_R^*(x;q,a)) = x\Big[ C_1(a)\log x +C_1(a)+2C_2(a) \\- \sum_{r\leq R} \frac{\mu^2(r)\mu((r,a))\phi((r,a))}{r\phi(r)} \left(\log \frac{x}{r^2}+2\gamma-1\right)\Big] +O(Rx^{\frac 12})+O_{A,B,\lambda} \left( \frac x{(\log x)^B}\right).
\end{multline*}

\end{proposition}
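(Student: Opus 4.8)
The plan is to split each sum over $q \le x/M$ as a full sum over $q \le x$ minus a sum over the large moduli $x/M < q \le x$, and to exploit the fact that the large‑modulus contributions to $\psi^*(x;q,a)$ and to $\rho_R^*(x;q,a)$ have identical main terms, so that they cancel in the difference. For the large moduli this is immediate from the lemmas already proved: since $M \le (\log x)^A$, Lemma~\ref{lemma average of psi} with $N = M$ gives
$$\sum_{x/M < q \le x}\psi^*(x;q,a) = x\sum_{\substack{s\le M \\ (s,a)=1}}\frac{1}{\phi(s)}\Big(1 - \frac sM\Big) + O_{A,B}\Big(\frac{x}{(\log x)^B}\Big) + O(|a|\log M),$$
while, since also $M \le R$, Lemma~\ref{lemma big values of q}(ii) with $N = M$ gives the \emph{same} main term with error $O(RM + |a|\log M)$. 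Subtracting, the large‑modulus contribution to $\psi^* - \rho_R^*$ is $O_{A,B}(x/(\log x)^B) + O(RM + |a|\log M)$, which will be admissible.

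It remains to evaluate the full sums over $q \le x$. For $\rho_R^*$ this is Lemma~\ref{lemma whole range of q} (applicable since $R, |a| \le x^{1/2}$), giving
$$\sum_{q\le x}\rho_R^*(x;q,a) = x\sum_{r\le R}\frac{\mu^2(r)\mu((r,a))\phi((r,a))}{r\phi(r)}\Big(\log\frac{x}{r^2} + 2\gamma - 1\Big) + O\big(Rx^{1/2} + |a|(\log x)^2\big).$$
For $\psi^*$, I would interchange the order of summation: for a fixed $n$ with $a < n \le x$, the moduli $q \le x$ with $n \equiv a \pmod q$ are exactly the divisors of $n - a$, and since $n - a \le x + |a| < 2x$ there is at most one divisor of $n-a$ exceeding $x$, namely $n-a$ itself, and only when $a < 0$; hence
$$\sum_{q\le x}\psi^*(x;q,a) = \sum_{a < n \le x}\Lambda(n)\tau(n-a) + O(|a|\log x).$$
The resolution of the Titchmarsh divisor problem by Fouvry~\cite{Fo} and Bombieri--Friedlander--Iwaniec~\cite{BFI} (valid in the stated range $|a| \le x^\lambda$, $\lambda < 1/4$, and resting on the dispersion method and estimates for sums of Kloosterman sums) then gives, after identifying the arithmetic constants by a singular‑series computation and a partial summation,
$$\sum_{a < n \le x}\Lambda(n)\tau(n-a) = x\big(C_1(a)\log x + C_1(a) + 2C_2(a)\big) + O_{A,B,\lambda}\Big(\frac{x}{(\log x)^B}\Big).$$

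Finally I would assemble these four displays. Writing $\sum_{q\le x/M} = \sum_{q\le x} - \sum_{x/M < q\le x}$ for $\psi^*$ and $\rho_R^*$ and subtracting, the terms $x\sum_{s\le M,\,(s,a)=1}\phi(s)^{-1}(1 - s/M)$ cancel, and what survives is precisely the asserted main term
$$x\Big[C_1(a)\log x + C_1(a) + 2C_2(a) - \sum_{r\le R}\frac{\mu^2(r)\mu((r,a))\phi((r,a))}{r\phi(r)}\Big(\log\frac{x}{r^2} + 2\gamma - 1\Big)\Big].$$
The accumulated error terms are $O(Rx^{1/2})$, $O(RM)$, $O(|a|(\log x)^2)$, $O(|a|\log M)$ and $O_{A,B,\lambda}(x/(\log x)^B)$; since $M \le (\log x)^A \ll x^{1/2}$ we have $RM \ll Rx^{1/2}$, and since $|a| \le x^\lambda$ the three terms involving $|a|$ are each $\ll x/(\log x)^B$. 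Hence the error is $O(Rx^{1/2}) + O_{A,B,\lambda}(x/(\log x)^B)$, the claimed bound.

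The step I expect to be the main obstacle is the appeal to the Titchmarsh divisor estimate: it requires the deep equidistribution of primes in arithmetic progressions to moduli essentially as large as $x^{1/2}$ (plain Bombieri--Vinogradov would lose a factor of $\log\log x$ from the moduli close to $x^{1/2}$), and, more delicately, one must extract not only the leading term $C_1(a)x\log x$ but also the exact secondary constant $C_1(a) + 2C_2(a)$ — which forces a careful evaluation of the ensuing sums of multiplicative functions, in the spirit of the lemmas of Section~3. Everything else is routine bookkeeping.
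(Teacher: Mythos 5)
Your argument is correct and follows essentially the same route as the paper: for $\rho_R^*$ you combine Lemma \ref{lemma average of rho q<x} with Lemma \ref{lemma big values of q}~(ii) exactly as in the text, and for $\psi^*$ you reassemble $\sum_{q\le x/M}$ as the full Titchmarsh divisor sum $\sum_{a<n\le x}\Lambda(n)\tau(n-a)$ minus the tail from Lemma \ref{lemma average of psi}, which is equivalent to the paper's direct citation of \cite[Proposition~6.1]{Fi} (supplemented by Lemma \ref{lemma psi negligible} to drop the coprimality condition), both resting on the same Fouvry/Bombieri--Friedlander--Iwaniec input and the same identification of the secondary constant $C_1(a)+2C_2(a)$. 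The cancellation of the $\sum_{s\le M}\phi(s)^{-1}(1-s/M)$ terms and your bookkeeping of the error terms are as in the paper.
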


\begin{proof}
Applying \cite[Proposition 6.1]{Fi} (which is based on the works \cite{Fo,BFI,FG,FGHM}) and the elementary estimate \cite[Lemma 5.2]{Fi} (see also \cite[Lemma 13.1]{FGHM}, we obtain that
\begin{multline} \sum_{\substack{q\leq \frac xM\\(q,a)=1}} \psi^*(x;q,a)= x\Big[C_1(a) \log x + C_1(a)+2C_2(a) -  \sum_{\substack{  s \leq M \\ (s,a)=1}} \frac{1}{\phi(s)} \left( 1-\frac sM\right)\Big] \\+O(2^{\omega(a)} M\log x)+O_{A,B,\lambda}\left( 2^{\omega(a)}\frac x{(\log x)^B}\right). \label{equation average of psi^* q<x/M}\end{multline}
Note that \cite[Proposition 6.1]{Fi} has the extra condition that $M$ should be an integer, however going through the proof we see that in general we have 
$$ \sum_{\substack{ s < M-\frac{aM}x \\ (s,a)=1 }} \frac 1{\phi(s)} \left( 1-\frac sM \right) = \sum_{\substack{ s \leq  M \\ (s,a)=1 }} \frac 1{\phi(s)} \left( 1-\frac sM \right)+O\left(\frac{|a|}x \right), $$
and hence this extra condition can be removed at the cost of an admissible error term. Moreover, by Lemma \ref{lemma psi negligible}, we can remove the condition $(q,a)=1$ at the cost of the error term $O(x^{\frac 12}\log x)$. Finally, we combine Lemmas \ref{lemma big values of q} (ii) and 
\ref{lemma average of rho q<x} to obtain that
\begin{multline} \sum_{q\leq \frac xM } \rho_R^*(x;q,a) = x\Big[ \sum_{r\leq R} \frac{\mu^2(r)\mu((r,a))\phi((r,a))}{r\phi(r)} \left(\log \frac{x}{r^2}+2\gamma-1\right) \\-\sum_{\substack{  s \leq M \\ (s,a)=1}} \frac{1}{\phi(s)} \left( 1-\frac sM\right)\Big]  +O(Rx^{\frac 12}+|a|(\log x)^2).
\label{equation sum over r not evaluated}   
\end{multline}

Subtracting this from \eqref{equation average of psi^* q<x/M} gives the desired result. 

\end{proof}

\begin{proof}[Proof of Theorem \ref{theorem main full range} (i)]
The result follows from combining Proposition \ref{proposition unevaluated r sum} with Lemma \ref{lemma sum over r with gcd}, and a straightforward calculation.
\end{proof}

\section{The coprimality condition}

\label{section coprimality}

In this section we prove Theorems \ref{theorem main dyadique} (ii) and \ref{theorem main full range} (ii). This amounts to controlling the contribution of $\rho_R^*(x;q,a)$ with $(q,a)>1$ (this is much easier for $\psi^*(x;q,a)$ and was already done in Lemma \ref{lemma psi negligible}). The condition $(q,a)=1$ is easier to treat than the condition $(q,a)>1$, and hence 
we will estimate sums over $(q,a)=1$ directly.

Theorem \ref{theorem main dyadique} (ii) will follow from the following lemma.

\begin{lemma}
Let $R,N\leq x$, and $|a|<x/N$ be such that $R\geq |a|N$. Then we have 
\begin{equation}
\sum_{\substack{\frac x{N}<q\leq x \\ (q,a)=1}} \rho_R^*(x;q,a) = x \sum_{\substack{ s \leq N \\ (s,a)=1}} \frac 1{\phi(s)}\left( 1-\frac sN \right)+O( 2^{\omega(a)}RN+|a|\log N).\label{equation gcd=1 dyadique}
\end{equation}
\label{lemma average of rho coprime} 
\end{lemma}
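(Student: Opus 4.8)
The plan is to replay the divisor-switching argument from the proof of Lemma~\ref{lemma big values of q}, but to detect the condition $(q,a)=1$ by M\"obius inversion; the hypothesis $R\geq|a|N$ will be exactly what is needed to evaluate the divisor sums that arise in closed form.

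First write $\delta_{(q,a)=1}=\sum_{e\mid(q,a)}\mu(e)$, so that the left-hand side of \eqref{equation gcd=1 dyadique} equals $\sum_{e\mid a}\mu(e)\sum_{\frac xN<q\leq x,\ e\mid q}\rho_R^*(x;q,a)$. For each $e\mid a$, substitute $q=eq'$ and then switch moduli as in the proof of Lemma~\ref{lemma big values of q}: writing $n=a+ueq'$ turns $\sum_{\frac x{eN}<q'\leq x/e}\rho_R^*(x;eq',a)$ into $\sum_{u\geq1}\sum_n F_R(n)$, with $n$ running over $n\equiv a\bmod ue$ and $a+ux/N<n\leq\min(x,a+ux)$; the upper limit is $x$ except for the single term $u=1$ when $a<0$, which I would treat separately with the obvious modification. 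Applying Lemma~\ref{lemma:additive character sum2} with modulus $ue$ and $y=a+ux/N$ then yields
\[
\sum_{e\mid a}\mu(e)\sum_{1\leq u<N-aN/x}\frac{x-a-ux/N}{ue}\sum_{\substack{r\leq R\\ r\mid ue}}\frac{\mu^2(r)\mu((r,a))\phi((r,a))}{\phi(r)}+O\bigl(2^{\omega(a)}RN\bigr),
\]
the error arising from the $O(R)$ term of Lemma~\ref{lemma:additive character sum2}, summed over the $\leq2^{\omega(a)}$ values of $e$ and the $O(N)$ values of $u$ (note $|a|<x/N$ forces $u\leq N$ in the displayed range).

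Here the hypothesis enters: $e\mid a$ gives $e\leq|a|$ and the range of summation gives $u\leq N$, so $ue\leq|a|N\leq R$ and the truncation $r\leq R$ is vacuous. By the multiplicativity computation already carried out in the proof of Lemma~\ref{lemma big values of q}, the complete sum $\sum_{r\mid ue}\mu^2(r)\mu((r,a))\phi((r,a))/\phi(r)$ equals $ue/\phi(ue)$ when $(ue,a)=1$ and vanishes otherwise; since $e\mid a$, only $e=1$ contributes. The main term thus collapses to $x\sum_{1\leq u<N-aN/x,\ (u,a)=1}\frac1{\phi(u)}\bigl(1-\frac ax-\frac uN\bigr)$, and discarding the $a/x$ shift together with completing the range to $u\leq N$ costs $O\bigl(|a|\sum_{u\leq N}1/\phi(u)\bigr)=O(|a|\log N)$. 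This gives \eqref{equation gcd=1 dyadique}.

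The only genuinely load-bearing step is the inequality $ue\leq R$, which is what permits replacing the truncated divisor sum by the complete, multiplicative one and hence forces $e=1$. The remaining points --- the bijection underlying the switch of moduli, the endpoint $u=1$ in the case $a<0$, and the standard bound $\sum_{u\leq N}1/\phi(u)\ll\log N$ --- are routine, and I expect the endpoint and range bookkeeping to be the only real care the argument requires.
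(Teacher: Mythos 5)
Your argument is correct and is essentially the paper's own proof: you perform the Möbius inversion on $(q,a)=1$ before switching moduli rather than after, but the resulting double sum $\sum_{e\mid a}\mu(e)\sum_u\frac{x-a-ux/N}{ue}\sum_{r\leq R,\,r\mid ue}$ is identical to the paper's \eqref{equation average rho coprime without condition on M} (with $e,u$ in place of $d,s$), and the decisive step --- using $ue\leq|a|N\leq R$ to complete the divisor sum, whose multiplicativity then kills every $e\neq1$ --- is exactly the one the paper uses. The endpoint bookkeeping for $u=1$, $a<0$ matches the paper's parenthetical remark about the exceptional term in that case.
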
 
\noindent (Compare with Lemma \ref{lemma big values of q} (ii).)
\begin{proof}

Following the proof of Lemma \ref{lemma big values of q}, we write 
\begin{align*}
\sum_{\substack{ \frac xN \leq q \leq x \\ (q,a)=1 }} \rho^*_R(x;q,a) &= \sum_{1\leq s < N-\frac{aN}x} \sum_{\substack{a+\frac{sx}N < n \leq x \\ n\equiv a \bmod s \\ (\frac{n-a}s,a)=1}} F_R(n).
\end{align*}
Applying M\"obius inversion and Lemma \ref{lemma:additive character sum2}, we see that the inner sum equals
\begin{align}
&\sum_{d\mid a} \mu(d)  \sum_{\substack{a+\frac{sx}N < n \leq x \\ n\equiv a \bmod ds }} F_R(n) =
 \frac{x-(a+sx/N)}s \sum_{d\mid a} \frac{\mu(d)}d \sum_{\substack{r\leq R \\ r\mid ds }} \frac{\mu^2(r)\mu((r,a))\phi((r,a))}{\phi(r)} + O(2^{\omega(a)}R).
\label{equation analogue of first lemma}
\end{align}

Hence,
\begin{multline}
\sum_{\substack{ \frac xN \leq q \leq x \\ (q,a)=1 }} \rho^*_R(x;q,a) =  x\sum_{1\leq s < N-\frac{aN}x} \frac 1s\left( 1-\frac sN -\frac ax\right)  \sum_{d\mid a} \frac{\mu(d)}d \sum_{\substack{r\leq R \\ r\mid ds}} \frac{\mu^2(r)\mu((r,a))\phi((r,a))}{\phi(r)} \\+ O(2^{\omega(a)}RN).
\label{equation average rho coprime without condition on M}
\end{multline}

Since $|a|N\leq R$, for $a>0$ the innermost sum equals
$$ \sum_{\substack{ r\mid ds}} \frac{\mu^2(r)\mu((r,a))\phi((r,a))}{\phi(r)} = \begin{cases}
\frac{ds}{\phi(ds)} & \text{ if } (ds,a)=1, \\
0 &\text{ otherwise.}
\end{cases} $$
If $a<0$, then we need to add an error term for the term $d=a$; this error term is easily seen to sum to $O(1)$. Therefore,
\begin{align*}
\sum_{\substack{ \frac xN \leq q \leq x \\ (q,a)=1 }} \rho^*_R(x;q,a) &= x \sum_{\substack{1\leq s < N-\frac{aN}x \\ (s,a)=1}} \frac 1s\left( 1-\frac sN -\frac ax\right)  \sum_{\substack{d\mid a \\ (d,a)=1 }} \frac{\mu(d)}d \frac{ds}{\phi(ds)} + O(2^{\omega(a)}RN). \\
&=x \sum_{\substack{ s \leq N \\ (s,a)=1}} \frac 1{\phi(s)}\left( 1-\frac sN \right)+ O(2^{\omega(a)}RN+|a|\log N).
\end{align*}
\end{proof}

\begin{proof}[Proof of Theorem \ref{theorem main dyadique} (ii)]
Combine Lemmas \ref{lemma average of psi}, \ref{lemma psi negligible} and \ref{lemma average of rho coprime}.

\end{proof}

In order to prove Theorem \ref{theorem main full range} (ii), we need to have an estimate on the sum of $\rho_R^*(x;q,a)$ over all $q\leq x$ coprime to $a$. We start with an elementary lemma.

\begin{lemma}
\label{lemma harmonic sum}
If $a\neq 0$ and $r\geq 1$ are integers, then for $y \in \mathbb R_{\geq 1/2}$ we have the estimate
$$ \sum_{\substack{ n\leq y  \\ (n,a)=1 \\ r \mid n}}  \frac 1n =  \delta_{(r,a)=1} \Big( \frac{\phi(a)}{ar} \Big(\log \frac yr + \gamma +\sum_{p\mid a} \frac {\log p}{p-1}\Big)+O\left(\frac{2^{\omega(a)}}{y}\right)\Big),$$
where $\delta_{(r,a)=1}$ equals $1$ when $(r,a)=1$, and is zero otherwise.
\end{lemma}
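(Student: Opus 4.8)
The plan is to reduce the sum to a pair of nested sums that can be handled by the prime number theorem for arithmetic progressions is not needed — this is purely elementary. First I would use M\"obius inversion on the coprimality condition: write $\delta_{(n,a)=1} = \sum_{d\mid (n,a)} \mu(d)$, and since we also impose $r\mid n$, the inner variable runs over $n$ divisible by $\mathrm{lcm}(r,d)$. Writing $n = \mathrm{lcm}(r,d)\, m$ turns the condition into an unrestricted harmonic sum over $m \leq y/\mathrm{lcm}(r,d)$, to which I apply the classical estimate $\sum_{m\leq t} 1/m = \log t + \gamma + O(1/t)$ (valid for $t\geq 1/2$, which is why the hypothesis $y\geq 1/2$ is stated). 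This gives
\begin{equation*}
\sum_{\substack{ n\leq y \\ (n,a)=1 \\ r\mid n}} \frac 1n = \sum_{d\mid a} \frac{\mu(d)}{\mathrm{lcm}(r,d)} \left( \log \frac{y}{\mathrm{lcm}(r,d)} + \gamma \right) + O\Bigg( \sum_{d\mid a} \frac{1}{y} \Bigg).
\end{equation*}
The error term is $O(2^{\omega(a)}/y)$ since $a$ has $2^{\omega(a)}$ divisors and each contributes $\mathrm{lcm}(r,d)\cdot(y/\mathrm{lcm}(r,d))^{-1}\cdot y^{-1}\cdot\mathrm{lcm}(r,d)^{-1}\cdot\mathrm{lcm}(r,d) \le 1/y$ — more carefully, the harmonic-sum error for modulus $\mathrm{lcm}(r,d)$ is $O(\mathrm{lcm}(r,d)/y)$ after multiplying back by $1/\mathrm{lcm}(r,d)$, so one gets $O(1/y)$ per divisor, hence $O(2^{\omega(a)}/y)$ total.

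Next I would simplify the main term. Since $d\mid a$ is squarefree-or-not but in any case $\mu(d)=0$ unless $d$ is squarefree, I only keep squarefree $d\mid a$. The key observation is that if $(r,a)>1$, the whole main term vanishes: picking a prime $p\mid (r,a)$, the divisors $d$ of $a$ pair up as $d \leftrightarrow pd$ (for $d$ with $p\nmid d$), and since $p\mid r$ we have $\mathrm{lcm}(r,d) = \mathrm{lcm}(r,pd)$, so the contributions of $d$ and $pd$ cancel because $\mu(pd) = -\mu(d)$. This explains the factor $\delta_{(r,a)=1}$. When $(r,a)=1$, for every squarefree $d\mid a$ we have $\mathrm{lcm}(r,d) = rd$ and $(r,d)=1$, so the main term becomes
\begin{equation*}
\frac 1r \sum_{d\mid a} \frac{\mu(d)}{d}\left( \log \frac{y}{rd} + \gamma \right) = \frac 1r \left[ \Big(\log\frac yr + \gamma\Big)\sum_{d\mid a}\frac{\mu(d)}d - \sum_{d\mid a}\frac{\mu(d)\log d}{d} \right].
\end{equation*}
Now $\sum_{d\mid a}\mu(d)/d = \prod_{p\mid a}(1-1/p) = \phi(a)/a$ (I would use $\phi(|a|)/|a| = \phi(a)/a$ implicitly, or just phrase everything with $a>0$ WLOG). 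For the logarithmic sum, differentiate the Euler product or use $\sum_{d\mid a}\mu(d)\log d/d = -\big(\phi(a)/a\big)\sum_{p\mid a}\frac{\log p}{p-1}$, which follows from logarithmic differentiation of $\prod_{p\mid a}(1-p^{-s})$ at $s=1$, or directly: $\sum_{d}\frac{\mu(d)\log d}d = \sum_{p\mid a}\frac{-\log p}{p}\prod_{q\mid a, q\ne p}(1-1/q)$, which rearranges to the stated form after pulling out $\prod_{p\mid a}(1-1/p)$. Substituting gives exactly $\frac{\phi(a)}{ar}\big(\log\frac yr + \gamma + \sum_{p\mid a}\frac{\log p}{p-1}\big)$, matching the claim.

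The routine parts are the harmonic sum estimate and the Euler-product identities for $\sum \mu(d)/d$ and $\sum \mu(d)\log d/d$; none of these is an obstacle. The one point requiring a little care is the error-term bookkeeping: I must check that the $O$-term from the harmonic sum, once summed over $d\mid a$ and divided by the various $\mathrm{lcm}(r,d)$, genuinely collapses to $O(2^{\omega(a)}/y)$ with no hidden dependence on $r$ — this works because $\mathrm{lcm}(r,d)\geq r \geq 1$ and the per-$d$ contribution is $\leq 1/y$. I would also remark at the start that one may assume $y \geq 1$ actually suffices for the clean form $\log t + \gamma + O(1/t)$, but the hypothesis $y\geq 1/2$ is kept because $\mathrm{lcm}(r,d)$ can equal up to $2y$... actually one should be slightly careful when $\mathrm{lcm}(r,d) > y$, in which case the inner sum is empty and equals $0$ while the ``main term'' $\log(y/\mathrm{lcm}(r,d))$ is negative; this discrepancy is absorbed into the error term since then $\mathrm{lcm}(r,d)/y > 1$ and $|\log(y/\mathrm{lcm}(r,d))|/\mathrm{lcm}(r,d) = O(1/y)$ as well (using $\log t \ll t$). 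So the final bookkeeping step is the only place to tread carefully; everything else is a direct computation.
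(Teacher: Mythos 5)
Your proof is correct and is essentially the paper's own argument: M\"obius inversion over $d\mid a$ on the coprimality condition, the harmonic-sum estimate $\sum_{m\le t}1/m=\log t+\gamma+O(1/t)$, and the Euler-product identities for $\sum_{d\mid a}\mu(d)/d$ and $\sum_{d\mid a}\mu(d)\log d/d$. One small point: when $(r,a)>1$ the lemma asserts the sum is \emph{exactly} zero, and your pairing argument only yields a main term of zero plus $O(2^{\omega(a)}/y)$ -- but in that case the sum is literally empty (any $n$ with $r\mid n$ shares a prime factor with $a$), which is how the paper disposes of it in one line before running the $(r,a)=1$ computation.
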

\begin{proof}

If $(r,a)>1$, then the sum on the left hand side is clearly zero. Otherwise, we apply M\"obius inversion and the standard estimate on the harmonic sum to obtain that
$$ \sum_{\substack{ n\leq y  \\ (n,a)=1 \\ r \mid n}}  \frac 1n= \frac 1r \sum_{d\mid a} \frac{\mu(d)}d \Big( \log  \frac y{rd} +\gamma+O\left( \frac {rd}y \right)\Big). $$
The proof follows from a standard calculation.
\end{proof}

The next lemma is an analogue of Lemma \ref{lemma average of rho q<x}.
\begin{lemma}
\label{lemma sum coprime rho full range}
For $0\neq |a| < x^{\frac 12}$ and $R\leq x^{\frac 12}$, the following holds:
\begin{multline*} \sum_{\substack{q\leq x \\ (q,a)=1}} \rho^*_R(x;q,a) = x \frac{\phi(|a|)}{|a|} \sum_{ \substack{ r\leq R \\ (r,a)=1} } \frac{\mu^2(r)}{r\phi(r)} \Big( \log \frac{x}{r^2} +2\gamma-1+\sum_{p\mid a} \frac{\log p}{p-1} \Big)\\+x\frac{\phi(|a|)}{|a|}\sum_{p\mid a} \frac{\log p}{p-1} \sum_{ \substack{t \leq R/p \\ (t,a)=1}} \frac{\mu^2(t) }{t\phi(t)}+O(|a| \log x +2^{\omega(a)} R x^{\frac 12} ).
\end{multline*}\end{lemma}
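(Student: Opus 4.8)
The plan is to follow the same strategy as in Lemma~\ref{lemma average of rho q<x}, splitting the range of $q$ at $x^{1/2}$ and imposing the extra coprimality condition $(q,a)=1$ throughout. For the large moduli $x^{1/2}<q\leq x$ we invoke Lemma~\ref{lemma average of rho coprime} with $N=x^{1/2}$ (noting that $|a|x^{1/2}\leq R x^{1/2}\cdot\frac{|a|}{R}$... in fact we need $R\geq |a|N=|a|x^{1/2}$, which need not hold, so here one should instead use \eqref{equation average rho coprime without condition on M}, which requires no such hypothesis, and only afterwards simplify the innermost $r$-sum). Concretely, writing $n=a+qs$ exactly as in the proof of Lemma~\ref{lemma big values of q}, we get
\[
\sum_{\substack{x^{1/2}<q\leq x\\(q,a)=1}}\rho_R^*(x;q,a) = x\sum_{\substack{t\leq x^{1/2}\\(t,a)=1}}\frac1t\Big(1-\frac{t}{x^{1/2}}\Big)\sum_{d\mid a}\frac{\mu(d)}d\sum_{\substack{r\leq R\\r\mid dt}}\frac{\mu^2(r)\mu((r,a))\phi((r,a))}{\phi(r)} + O(2^{\omega(a)}Rx^{1/2}),
\]
and for the small moduli $q\leq x^{1/2}$ we take $y=a_+$ in Lemma~\ref{lemma:additive character sum2} after M\"obius inversion on the condition $(\tfrac{n-a}{s},a)=1$, obtaining the analogous expression with $\sum_{t\leq x^{1/2}/?}\frac1t$ replaced by a clean harmonic sum.

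The next step is to evaluate the inner $r$-sum. Since $dt$ ranges over integers with $d\mid a$, the quantity $\sum_{r\mid dt}\frac{\mu^2(r)\mu((r,a))\phi((r,a))}{\phi(r)}$ is a multiplicative function of $dt$ that vanishes whenever $(dt,a)>1$ and equals $\tfrac{dt}{\phi(dt)}$ otherwise; hence only $d=1$ (up to the harmless $d=a$ correction when $a<0$) and $(t,a)=1$ survive, collapsing the $d$-sum. Here, though, we must be more careful than in Lemma~\ref{lemma average of rho coprime} because the condition $r\leq R$ genuinely truncates the sum: for $t$ with $(t,a)=1$ we have
\[
\sum_{\substack{r\leq R\\r\mid t}}\frac{\mu^2(r)\mu((r,a))\phi((r,a))}{\phi(r)} = \sum_{\substack{r\leq R\\r\mid t}}\frac{\mu^2(r)}{\phi(r)},
\]
which is $\tfrac{t}{\phi(t)}$ only when every divisor of $t$ is $\leq R$, i.e.\ when $t\leq R$. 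To avoid this obstruction, one should instead perform the divisor switch before summing over $q$: expand $F_R(n)=\sum_{r\leq R}\frac{\mu(r)}{\phi(r)}\sum_{(b,r)=1}e(bn/r)$, move the $r$-sum outside, and handle $\sum_{r\le R}\frac{\mu(r)}{\phi(r)}$ times a character sum over the progression, exactly as in the proof of Lemma~\ref{lemma average of rho q<x}. This yields, after inserting M\"obius inversion for $(q,a)=1$ and applying Lemma~\ref{lemma harmonic sum} to evaluate $\sum_{\substack{q\leq x^{1/2}/r,\ (q,a)=1}}\frac1q$ and $\sum_{\substack{q\leq x/r,\ (q,a)=1\\ r\mid q}}\frac1q$, a main term of the shape $x\frac{\phi(|a|)}{|a|}\sum_{r\leq R}\frac{\mu^2(r)\mu((r,a))\phi((r,a))}{r\phi(r)}\big(\log\frac{x}{r^2}+2\gamma-1+\sum_{p\mid a}\frac{\log p}{p-1}\big)$.

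The final bookkeeping step is to rewrite $\sum_{r\leq R}\frac{\mu^2(r)\mu((r,a))\phi((r,a))}{r\phi(r)}(\cdots)$ in terms of sums over $r$ coprime to $a$: writing $\mu((r,a))\phi((r,a))$ via its prime factorization and peeling off the prime powers dividing both $r$ and $a$, a squarefree $r$ with $\mu^2(r)\mu((r,a))\phi((r,a))\neq 0$ must be of the form $r=p t$ with $p\mid a$, $p\nmid t$, $(t,a)=1$, contributing $-\tfrac1{\phi(p)}=-\tfrac1{p-1}$, or $r=t$ with $(t,a)=1$. Carrying out this split produces the $r$-sum over $(r,a)=1$ plus the extra term $x\frac{\phi(|a|)}{|a|}\sum_{p\mid a}\frac{\log p}{p-1}\sum_{\substack{t\leq R/p,\ (t,a)=1}}\frac{\mu^2(t)}{t\phi(t)}$ appearing in the statement, with the $\log\frac{x}{r^2}=\log\frac{x}{p^2t^2}$ accounting bundled into the $\log p$ coefficient. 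The main obstacle I anticipate is precisely the order-of-operations issue flagged above: one cannot first sum over $q$ with Lemma~\ref{lemma:additive character sum2} and then truncate $r$, because the identity $\sum_{r\mid t,\ r\le R}\mu^2(r)/\phi(r)=t/\phi(t)$ fails for $t>R$; performing the divisor switch on the additive characters first — as in Lemma~\ref{lemma average of rho q<x} rather than Lemma~\ref{lemma average of rho coprime} — circumvents this cleanly, and the error terms $O(2^{\omega(a)}Rx^{1/2})$ (from the $O(r\log r)$ in Lemma~\ref{lemma:additive character sum} summed over $r\le R$ and $q\le x^{1/2}$) and $O(|a|\log x)$ (from the edge effects in the $n=a+qs$ substitution, as in Lemma~\ref{lemma big values of q}) combine to exactly the claimed error.
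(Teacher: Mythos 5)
Your overall plan is the same as the paper's: split at $q=x^{1/2}$, use M\"obius inversion over $d\mid a$ for the coprimality condition, and (for the large moduli) bring the $r$-sum outside the $s$-sum rather than trying to collapse $\sum_{r\le R,\,r\mid t}$ multiplicatively. You correctly diagnose the truncation subtlety (the identity $\sum_{r\mid t}\mu^2(r)/\phi(r)=t/\phi(t)$ is unavailable for $t>R$), and your accounting of the error terms is essentially right. The gap is in the evaluation of the main term, which is where the actual work of this lemma lies.

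First, the intermediate expression you claim,
$x\tfrac{\phi(|a|)}{|a|}\sum_{r\leq R}\tfrac{\mu^2(r)\mu((r,a))\phi((r,a))}{r\phi(r)}\bigl(\log\tfrac{x}{r^2}+2\gamma-1+\sum_{p\mid a}\tfrac{\log p}{p-1}\bigr)$,
is not what the computation produces, and it cannot be massaged into the stated result: the terms with $(r,a)>1$ would contribute a multiple of $\log x$ (e.g.\ for $a=p$ they contribute $-\log x\sum_{t\le R/p,(t,p)=1}\mu^2(t)/(pt\phi(t))+\cdots$), whereas the second main term in the lemma carries no $\log x$. Second, your ``bookkeeping'' rests on the claim that a squarefree $r$ with $\mu^2(r)\mu((r,a))\phi((r,a))\neq 0$ must have $(r,a)=1$ or $(r,a)=p$; this is false, since for $(r,a)=p_1p_2$ one gets $\mu(p_1p_2)\phi(p_1p_2)=(p_1-1)(p_2-1)\neq 0$. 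What actually kills all $r$ with $\omega((r,a))\ge 2$ is the $d$-summation, not the weight: after writing $r\mid ds\iff \tfrac{r}{(r,d)}\mid s$ and evaluating the harmonic sum over $s$, one is left with the two identities
$\sum_{d\mid a}\mu(d)(r,d)/d=\delta_{(r,a)=1}\,\phi(|a|)/|a|$ and
$\sum_{d\mid a}\mu(d)(r,d)\log(r,d)/d$, the latter vanishing unless $(r,a)$ is a prime power $p^k$, in which case it equals $-\tfrac{\phi(|a|)}{|a|}\tfrac{p\log p}{p-1}$. It is precisely this second identity, applied to $r=pt$ with $(t,a)=1$, that generates the term $x\tfrac{\phi(|a|)}{|a|}\sum_{p\mid a}\tfrac{\log p}{p-1}\sum_{t\le R/p,(t,a)=1}\tfrac{\mu^2(t)}{t\phi(t)}$; it does not arise from re-indexing a sum of the shape you wrote down. (A further small point: in the small-moduli range $q\le x^{1/2}$ the harmonic sum $\sum_{q\le x^{1/2},(q,a)=1,r\mid q}1/q$ already vanishes unless $(r,a)=1$, so that piece contributes only the first main term with $x$ replaced by $x^{1/2}$ in the logarithm; the $(r,a)=p$ contribution comes entirely from the large moduli.)
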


\begin{proof}
Arguing as in Lemma \ref{lemma average of rho q<x}, we cut the sum at $q=x^{\frac 12}$ and exchange divisors. Applying M\"obius inversion, setting $y=a_+:=\max \{0,a\}$ in Lemma \ref{lemma:additive character sum2} and applying \eqref{equation analogue of first lemma}, we compute
\begin{align*}
\sum_{\substack{q\leq x \\ (q,a)=1}} \rho_R^*(x;q,a) &= \sum_{\substack{q\leq x^{\frac 12} \\ (q,a)=1}} \sum_{\substack{ n\leq x \\ n \equiv a \bmod q \\ n> a}} F_R(n) + \sum_{\substack{1\leq s < x^{\frac 12}- a{x^{-\frac 12}} }} \sum_{\substack{ a+sx^{\frac 12}< n\leq x \\ n \equiv a \bmod s \\  (\frac{n-a}s,a)=1}} F_R(n) \\
&= \sum_{\substack{q\leq x^{\frac 12} \\ (q,a)=1}}  \frac {x-a_+}q \sum_{\substack{r\leq R \\ r\mid q}} \frac{\mu^2(r) \mu((r,a))\phi((r,a))}{\phi(r)} +O(Rx^{\frac 12})  \\&\hspace{-1cm}+ \sum_{\substack{s < x^{\frac 12}- a{x^{-\frac 12}} }}  \sum_{d\mid a} \frac{\mu(d)}d \frac {x-sx^{\frac 12} -a}s   \sum_{\substack{r\leq R \\ r\mid ds}} \frac{\mu^2(r) \mu((r,a))\phi((r,a))}{\phi(r)} +O(2^{\omega(a)}Rx^{\frac 12}) \\ &= I + II.
\end{align*}
To evaluate the first term, we apply Lemma \ref{lemma harmonic sum} and obtain that  
$$ I= x \frac{\phi(|a|)}{|a|} \sum_{ \substack{ r\leq R \\ (r,a)=1} } \frac{\mu^2(r)}{r\phi(r)} \Big( \log \frac{x^{\frac 12}}{r} +\gamma+\sum_{p\mid a} \frac{\log p}{p-1} \Big)+O(|a|\log x +2^{\omega(a)}R x^{\frac 12} ).$$

As for the second, we note that $r\mid ds$ if and only if $\tfrac r{(r,d)} \mid s$, and thus 
\begin{align*}
II &=  x  \sum_{r\leq R} \frac{\mu^2(r) \mu((r,a))\phi((r,a))}{\phi(r)} \sum_{d\mid a} \frac{\mu(d)}d \sum_{\substack{s < x^{\frac 12}- a{x^{-\frac 12}} \\ \frac r{(r,d)} \mid s}}   \frac{1-ax^{-1}-sx^{-\frac 12} }s +O(2^{\omega(a)}Rx^{\frac 12}) \\
&= x \sum_{r\leq R} \frac{\mu^2(r) \mu((r,a))\phi((r,a))}{\phi(r)} \sum_{d\mid a} \frac{\mu(d)}d  \frac{(r,d)}r \left[ \log \frac{x^{\frac 12}(r,d)}{r}+ \gamma-1 +O\left(\frac{|a|}x +\frac r{(r,d) x^{\frac 12}} \right) \right] \\
& \hspace{4cm} +O(2^{\omega(a)}Rx^{\frac 12}). 
 \end{align*}  
Note however that 
$$ \sum_{d\mid a} \frac{\mu(d) (r,d)}d= \delta_{(r,a)=1} \frac{\phi(|a|)}{|a|}; \hspace{1cm} \sum_{d\mid a} \frac{\mu(d) (r,d)}d \log (r,d)=\begin{cases}
-\frac{\phi(|a|)}{|a|} \frac{p\log p}{p-1} & \text{ if } (a,r)=p^k \\
0 & \text{ otherwise, }
\end{cases}$$
and hence
\begin{align*}
II &= x \frac{\phi(|a|)}{|a|} \sum_{\substack{r\leq R \\ (r,a)=1}} \frac{\mu^2(r) }{r\phi(r)} \left( \log \frac{x^{\frac 12}}{r}+ \gamma-1 \right)+x\frac{\phi(|a|)}{|a|}\sum_{p\mid a} \frac{\log p}{p-1} \sum_{ \substack{t \leq R/p \\ (t,a)=1}} \frac{\mu^2(t) }{t\phi(t)} \\&\hspace{2cm}+O( 2^{\omega(a)}R x^{\frac 12} ). 
\end{align*}  

\end{proof}

\begin{proposition}
\label{proposition coprime psi minus rho}
Fix $A,B\geq 1$ and $0<\lambda<1/4$. For $0\neq |a| \leq x^{\lambda}$, $1\leq M\leq R \leq x^{\frac 12}$ and $M\leq (\log x)^A$, we have that
\begin{multline*}
\sum_{\substack{q\leq \frac xM \\ (q,a)=1}} (\psi^*(x;q,a)-\rho_R^*(x;q,a)) = x \Big[ C_1(a) \log x + C_1(a)+ 2C_2(a)  \\-\frac{\phi(|a|)}{|a|} \sum_{ \substack{ r\leq R \\ (r,a)=1} } \frac{\mu^2(r)}{r\phi(r)} \Big( \log \frac{x}{r^2} +2\gamma-1+\sum_{p\mid a} \frac{\log p}{p-1} \Big) -\frac{\phi(|a|)}{|a|}\sum_{p\mid a} \frac{\log p}{p-1} \sum_{ \substack{t \leq R/p \\ (t,a)=1}} \frac{\mu^2(t) }{t\phi(t)} \Big] \\+O(2^{\omega(a)}Rx^{\frac 12} )+O_{A,B,\lambda}\left( 2^{\omega(a)}\frac x{(\log x)^B} \right).
\end{multline*}
\end{proposition}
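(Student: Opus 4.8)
The plan is to follow exactly the template of the proof of Proposition~\ref{proposition unevaluated r sum}, but carrying the coprimality condition along throughout rather than removing it: one estimates $\sum_{q\leq x/M,\,(q,a)=1}\psi^*(x;q,a)$ and $\sum_{q\leq x/M,\,(q,a)=1}\rho_R^*(x;q,a)$ separately, and the whole point is that a weighted reciprocal-totient sum over $s\leq M$ cancels between the two sides.

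For the prime-counting piece I would invoke \cite[Proposition 6.1]{Fi} directly, together with \cite[Lemma 5.2]{Fi} to dispose of the requirement there that $M$ be an integer. Since we want the sum restricted to $(q,a)=1$, there is no need here for Lemma~\ref{lemma psi negligible} (which, in the proof of Proposition~\ref{proposition unevaluated r sum}, served only to drop the coprimality condition). This produces precisely the estimate \eqref{equation average of psi^* q<x/M}, namely that $\sum_{q\leq x/M,\,(q,a)=1}\psi^*(x;q,a)$ equals $x$ times
\[
C_1(a)\log x+C_1(a)+2C_2(a)-\sum_{\substack{s\leq M\\(s,a)=1}}\frac1{\phi(s)}\Big(1-\frac sM\Big),
\]
up to an error $O(2^{\omega(a)}M\log x)+O_{A,B,\lambda}(2^{\omega(a)}x/(\log x)^B)$.

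For $\rho_R^*$ I would write $\sum_{q\leq x/M}=\sum_{q\leq x}-\sum_{x/M<q\leq x}$, always under $(q,a)=1$. The full-range sum is exactly Lemma~\ref{lemma sum coprime rho full range}, whose (somewhat elaborate) main term is the combination of $R$-sums appearing, with a minus sign, in the statement. For the tail $x/M<q\leq x$ --- where the moduli lie close to $x$ --- I would swap divisors and apply M\"obius inversion over $d\mid a$ exactly as in the proof of Lemma~\ref{lemma average of rho coprime}, reaching \eqref{equation average rho coprime without condition on M} with $N=M$; there the inner sum $\sum_{d\mid a}\frac{\mu(d)}d\sum_{r\leq R,\,r\mid ds}\frac{\mu^2(r)\mu((r,a))\phi((r,a))}{\phi(r)}$ collapses, by multiplicativity as at the end of that proof (the constraint $r\leq R$ being harmless since $ds\leq|a|M\leq R$), to $s/\phi(s)$ when $(s,a)=1$ and to $0$ otherwise. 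Hence $\sum_{x/M<q\leq x,\,(q,a)=1}\rho_R^*(x;q,a)=x\sum_{s\leq M,\,(s,a)=1}\phi(s)^{-1}(1-s/M)+O(2^{\omega(a)}RM+|a|\log M)$.

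Subtracting the two $\rho_R^*$ contributions from the $\psi^*$ estimate, the two copies of $\sum_{s\leq M,\,(s,a)=1}\phi(s)^{-1}(1-s/M)$ cancel, and since the asserted main term is literally $C_1(a)\log x+C_1(a)+2C_2(a)$ minus the main term of Lemma~\ref{lemma sum coprime rho full range}, there is nothing further to simplify. It only remains to check that every error term collected along the way --- $O(2^{\omega(a)}M\log x)$, $O(|a|\log x)$, $O(2^{\omega(a)}RM)$, $O(|a|\log M)$ --- is absorbed into $O(2^{\omega(a)}Rx^{1/2})+O_{A,B,\lambda}(2^{\omega(a)}x/(\log x)^B)$, which is immediate from $M\leq(\log x)^A$, $|a|\leq x^\lambda$ with $\lambda<1/4$, and $1\leq R\leq x^{1/2}$. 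So there is no serious obstacle inside this proof: the real work has been outsourced to \cite[Proposition 6.1]{Fi} and to Lemma~\ref{lemma sum coprime rho full range}. The one delicate point to get right is that the $d\mid a$ M\"obius inversion on the $\rho_R^*$ side is performed consistently with the coprimality treatment underlying \cite[Proposition 6.1]{Fi}, so that the weighted reciprocal-totient sums on the two sides appear with \emph{exactly} the same coefficient and cancel verbatim.
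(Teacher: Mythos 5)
Your proposal matches the paper's own proof essentially verbatim: the paper likewise obtains $\sum_{q\leq x/M,\,(q,a)=1}\rho_R^*(x;q,a)$ by combining Lemma~\ref{lemma sum coprime rho full range} (full range) with Lemma~\ref{lemma average of rho coprime} at $N=M$ (tail), and then subtracts this from \eqref{equation average of psi^* q<x/M}, with the weighted sums $\sum_{s\leq M,\,(s,a)=1}\phi(s)^{-1}(1-s/M)$ cancelling exactly as you describe. Your error-term bookkeeping is also consistent with the paper's, so nothing further is needed.
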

\begin{proof}
Combining Lemmas \ref{lemma average of rho coprime} and \ref{lemma sum coprime rho full range} gives that
\begin{multline}
\sum_{\substack{q\leq \frac xM \\ (q,a)=1}} \rho_R^*(x;q,a) = x \frac{\phi(|a|)}{|a|} \sum_{ \substack{ r\leq R \\ (r,a)=1} } \frac{\mu^2(r)}{r\phi(r)} \Big( \log \frac{x}{r^2} +2\gamma-1+\sum_{p\mid a} \frac{\log p}{p-1} \Big) 
 \\ +x\frac{\phi(|a|)}{|a|}\sum_{p\mid a} \frac{\log p}{p-1} \sum_{ \substack{t \leq R/p \\ (t,a)=1}} \frac{\mu^2(t) }{t\phi(t)}-x \sum_{\substack{ s \leq M \\ (s,a)=1}} \frac 1{\phi(s)}\left( 1-\frac sM \right)+O(|a|\log x+2^{\omega(a)}R x^{\frac 12} ).
 \label{equation average rho coprime x/M}
\end{multline}
Applying \eqref{equation average of psi^* q<x/M} then yields the desired result.

\end{proof}

\begin{proof}[Proof of Theorem \ref{theorem main full range} (ii)]
The proof follows from combining Proposition \ref{proposition coprime psi minus rho} with Lemma \ref{lemma sum over r coprime to a}.
\end{proof}

\section{The quantity $\rho_R^*(x;q,a)$ when $(q,a)>1$}
\label{section not coprime}

Comparing Theorem \ref{theorem main full range} (i) and (ii), we see that the main terms agree when $a=\pm 1$ (since the sums on the left hand side coincide), but they are very different when $\omega(a)\geq 1$. More precisely, combining Lemmas \ref{lemma big values of q} (ii), \ref{lemma average of rho q<x}, \ref{lemma average of rho coprime} and \ref{lemma sum coprime rho full range} we see that for $0<|a|<x^{\frac 12}$ and $1\leq |a|N\leq R \leq x^{\frac 12}$,
\begin{align}
\sum_{ \substack{ q\leq \frac xN \\ (q,a)>1}} \rho_R^*(x;q,a) &= x \sum_{r\leq R} \frac{\mu^2(r)\mu((r,a))\phi((r,a))}{r\phi(r)} \left(\log \frac{x}{r^2}+2\gamma-1\right) \notag \\
&-x \frac{\phi(|a|)}{|a|} \sum_{ \substack{ r\leq R \\ (r,a)=1} } \frac{\mu^2(r)}{r\phi(r)} \Big( \log \frac{x}{r^2} +2\gamma-1+\sum_{p\mid a} \frac{\log p}{p-1} \Big)\notag \\&-x\frac{\phi(|a|)}{|a|}\sum_{p\mid a} \frac{\log p}{p-1} \sum_{ \substack{t \leq R/p \\ (t,a)=1}} \frac{\mu^2(t) }{t\phi(t)}+ O_a(Rx^{\frac 12} ). \label{equation average of rho not coprime}
\end{align}

It is not surprising that the main terms in this estimate are independent of $N$. Indeed applying Lemmas \ref{lemma big values of q} (ii) and \ref{lemma average of rho coprime} directly shows that 
$$  \sum_{ \substack{ \frac xN <  q \leq x \\ (q,a)>1}} \rho_R^*(x;q,a) \ll 2^{\omega(a)} RN +|a|\log N. $$

One can evaluate the sums in \eqref{equation average of rho not coprime} using Lemmas \ref{lemma sum over r coprime to a} and \ref{lemma sum over r with gcd}, resulting in the expression
\begin{multline*}
\sum_{ \substack{q \leq \frac xN \\ (q,a)>1}} \rho_R^*(x;q,a) =  \Big( \frac{\phi(|a|)}{|a|} \Big)^2 \frac xR \Big[  \log \frac x{R^2} +2\gamma-3 + \sum_{p\mid a} \frac{p+1}{p-1} \log p \Big]  \\
+O_{\epsilon,a}\left( \frac {x\log x}{R^{\frac 32} \exp \big(c \frac{(\log R)^{\frac 35}}{(\log\log R)^{\frac 15}} \big)} + Rx^{\frac 12} \right).
\end{multline*} 
Hence, the term $\rho_R^*(x;q,a)$ is on average of order $(N/R)(\log (x/R^2)+1)$. However the mass in this average is contained in the terms $q \ll_a x/R$, and thus it is more accurate to say that this term is of order $(\log (x/R^2)+1)$ on average for $q \ll_a x/R$, and is small for larger moduli.

In conclusion, while being quite small when $(q,a)>1$, the quantity $\rho^*_R(x;q,a)$ is not completely negligible and can be evaluated asymptotically on average over those values of $q$.

\section{Further proofs}

We will show in Lemma \ref{lemma discrepancy} that the total mass of $\rho^*_R(x;q,a)$ over all arithmetic progressions modulo $q$ is about $x$, and that this mass is concentrated in the invertible residue classes. It follows that $\rho^*_R(x;q,a)-x/\phi(q)$ is the approximate discrepancy of $\rho^*_R(x;q,a)$ in the invertible residue classes modulo $q$.

\begin{lemma}
\label{lemma discrepancy}
The total mass of $F_R(n)$ for $q<n\leq x$ in all residue classes modulo $q$ equals
$$ \sum_{1\leq a\leq q} \rho^*_R(x;q,a) = x-q +O(R), $$
and its mass in the invertible residue classes modulo $q$ is given by
$$ \sum_{\substack{1\leq a \leq q \\ (a,q)=1}} \rho^*_R(x;q,a) = x-q+ O \bigg( \frac xR \prod_{p\mid q} \left(2+\frac 1p \right) + 2^{\omega(q)} R \bigg). $$
\end{lemma}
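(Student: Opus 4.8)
The plan is to compute the two total masses directly from the definition of $F_R(n)$ by switching the order of summation, treating the full mass first and the coprime mass second. For the first identity, I would write
\[
\sum_{1\leq a\leq q} \rho_R^*(x;q,a) = \sum_{q<n\leq x} F_R(n) = \sum_{r\leq R} \frac{\mu(r)}{\phi(r)} \sum_{\substack{1\leq b\leq r\\ (b,r)=1}} \sum_{q<n\leq x} e(bn/r).
\]
The inner sum over $n$ is a geometric progression, equal to $(x-q)/r$ when $r=1$ (the term $b=r=1$) and $O(1/\|b/r\|)$ otherwise; summing $O(1/\|b/r\|)$ over $b$ coprime to $r$ gives $O(r\log r)$, and then $\sum_{r\leq R}\mu^2(r)(r\log r)/\phi(r) \ll R$ after bounding $r/\phi(r)\ll \log\log r$. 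This isolates the main term $x-q$ from $r=1$ and leaves $O(R)$, as claimed. (Alternatively one can invoke Lemma \ref{lemma:additive character sum2} with $s=q$... but here it is cleanest to sum $n$ over all residues, which corresponds to taking $s=1$ after the switch, so a direct computation is simplest.)

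For the second identity I would detect coprimality by M\"obius inversion, $\mathbf 1_{(a,q)=1} = \sum_{d\mid (a,q)} \mu(d)$, so that
\[
\sum_{\substack{1\leq a\leq q\\ (a,q)=1}} \rho_R^*(x;q,a) = \sum_{d\mid q}\mu(d) \sum_{\substack{1\leq a\leq q\\ d\mid a}} \rho_R^*(x;q,a) = \sum_{d\mid q}\mu(d) \sum_{\substack{q<n\leq x\\ d\mid n}} F_R(n).
\]
The inner sum is a sum of $F_R$ over an arithmetic progression $0 \bmod d$, so Lemma \ref{lemma:additive character sum2} (with $a=0$, $s=d$, $y=q$) applies and gives
\[
\sum_{\substack{q<n\leq x\\ d\mid n}} F_R(n) = \frac{x-q}{d}\sum_{\substack{r\leq R\\ r\mid d}} \frac{\mu^2(r)\phi(r)}{\phi(r)} + O(R) = \frac{x-q}{d}\sum_{\substack{r\leq R\\ r\mid d}} \mu^2(r) + O(R),
\]
using $\mu((r,0))\phi((r,0)) = \mu(r)\phi(r)$ when $r\mid d$ (here $(r,0)=r$), hence the $r$-sum is $\sum_{r\mid d}\mu^2(r) = 2^{\omega(d)}$ when $d\leq R$ and is at least cut off otherwise. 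Substituting back, the main term becomes $(x-q)\sum_{d\mid q}\mu(d)2^{\omega(d)}/d$ (up to the truncation $d\leq R$), and a multiplicative computation shows $\sum_{d\mid q}\mu(d)2^{\omega(d)}/d = \prod_{p\mid q}(1-2/p)$. This is \emph{not} equal to $1$, so to land on the stated main term $x-q$ I need to be more careful: the point is that $\sum_{d\mid q, d>R}$ of the main term must be folded into the error, and the error terms must be organized so the surviving main term is exactly $(x-q)\cdot\mathbf 1$ plus controlled junk. Concretely I expect the honest bookkeeping to be: $\sum_{\substack{1\leq a\leq q,(a,q)=1}}\rho_R^*(x;q,a) = x-q + O\big(\tfrac{x}{R}\prod_{p\mid q}(2+\tfrac1p) + 2^{\omega(q)}R\big)$, where the $2^{\omega(q)}R$ collects $\sum_{d\mid q}O(R)$, and the $\tfrac xR\prod_{p\mid q}(2+\tfrac1p)$ collects the tail $d>R$ together with the correction between $\sum_{r\mid d}\mu^2(r)$ and its full value — one estimates $\sum_{d\mid q}\tfrac{x}{d}\cdot(\text{stuff of size }2^{\omega(d)}\text{ supported on }d\gg R) \ll \tfrac xR \sum_{d\mid q}2^{\omega(d)}\cdot(\text{geometric-type weight}) \ll \tfrac xR\prod_{p\mid q}(2+\tfrac1p)$.

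The main obstacle is precisely this last bookkeeping step: showing that the difference between $(x-q)\prod_{p\mid q}(1-2/p)$ (what the naive main term computation gives) and the desired $x-q$ is genuinely absorbed by the stated error term. The resolution is that $\prod_{p\mid q}(1-2/p)$ is \emph{not} what actually appears once one keeps the truncation $r\mid d$ with $r\leq R$ honestly — the divisor $d$ ranges over divisors of $q$, which can be as large as $q$ itself, and for $d>R$ the inner sum $\sum_{r\mid d, r\leq R}\mu^2(r)$ is a proper truncation whose deviation from $2^{\omega(d)}$ is what must be estimated, while the contribution of such large $d$ to the main term is $O(x/d) = O(x/R)$-type and must be summed against the number of divisors with a convergent weight, yielding the $\prod_{p\mid q}(2+1/p)$ factor. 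I would set this up by splitting $\sum_{d\mid q} = \sum_{d\leq R} + \sum_{d>R}$ at the very start: for $d\leq R$ the $r$-sum is the full $2^{\omega(d)}$ and $\sum_{d\mid q, d\leq R}\mu(d)2^{\omega(d)}/d$ differs from the full Euler product $\prod_{p\mid q}(1-2/p)$ — wait, but we want $1$ not that product, so in fact the clean statement must come from a different grouping. Given the complexity, I would ultimately prefer to run the argument through Lemma \ref{lemma:additive character sum} directly on each progression $a\bmod q$ with $(a,q)=1$, where the character sum $\sum_{(b,r)=1}\sum_{n\equiv a(q)} e(bn/r)$ is supported on $r\mid q$ and equals $\tfrac{x-q}{q}\mu(r/(r,a))\phi(r)/\phi(r/(r,a))$; summing over $a$ coprime to $q$ and over $r\mid q$, $r\leq R$, and recognizing $\sum_{(a,q)=1, 1\le a\le q}\mu(r/(r,a))\phi(r)/\phi(r/(r,a))$ as a Ramanujan-type sum that collapses to $\phi(q)\cdot\mathbf 1_{r=1} + (\text{small})$, recovers $x-q$ cleanly, with the $r>R$ tail and the error $O(r\log r)$ per $r$ producing exactly the two stated error terms after summing over $r\mid q$.
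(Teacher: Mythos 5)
Your overall strategy for the second estimate --- M\"obius inversion over $d\mid q$ followed by Lemma \ref{lemma:additive character sum2} with $a=0$, $s=d$, $y=q$ --- is exactly the paper's, but a computational slip derails it. With $a=0$ one has $(r,0)=r$, so the summand in Lemma \ref{lemma:additive character sum2} is
$$\frac{\mu^2(r)\,\mu((r,0))\,\phi((r,0))}{\phi(r)}=\frac{\mu^2(r)\,\mu(r)\,\phi(r)}{\phi(r)}=\mu(r),$$
not $\mu^2(r)$: you correctly record $\mu((r,0))\phi((r,0))=\mu(r)\phi(r)$ but then drop the factor $\mu(r)$ when substituting. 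This matters decisively. With the correct summand, the untruncated inner sum is $\sum_{r\mid d}\mu(r)=\delta_{d=1}$, so the main term is exactly $(x-q)\sum_{d\mid q}\frac{\mu(d)}{d}\delta_{d=1}=x-q$; there is no $\prod_{p\mid q}(1-2/p)$ to reconcile, and the entire ``bookkeeping obstacle'' you describe is an artifact of the slip. All that remains is the truncation defect for squarefree $d\neq1$, namely $\sum_{r\leq R,\,r\mid d}\mu(r)=-\sum_{r>R,\,r\mid d}\mu(r)$, and with $q'=\prod_{p\mid q}p$ one bounds
$$\sum_{\substack{d\mid q'\\ d\neq1}}\frac{1}{d}\Big|\sum_{\substack{r\mid d\\ r>R}}\mu(r)\Big|\;\leq\;\sum_{\substack{r\mid q'\\ r>R}}\sum_{\substack{d\mid q'\\ r\mid d}}\frac1d\;\leq\;\frac1R\prod_{p\mid q}\Big(1+\frac1p\Big)\sum_{r\mid q'}\prod_{p\mid r}\Big(1+\frac1p\Big)^{-1}=\frac1R\prod_{p\mid q}\Big(2+\frac1p\Big),$$
which multiplied by $x-q$ gives the first error term, while the $O(R)$ from each of the $2^{\omega(q)}$ nonzero terms $d\mid q'$ gives the second.

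Your fallback route through Lemma \ref{lemma:additive character sum} applied to each class $a\bmod q$ separately does not rescue the argument: that lemma carries an error $O(r\log r)$ \emph{per residue class}, so summing over the $\phi(q)$ classes coprime to $q$ and over $r\leq R$ produces an error of rough size $\phi(q)R\log R$, far larger than the stated $2^{\omega(q)}R$. (Your first estimate is essentially correct and matches the paper, though the direct exponential-sum error computation gives $O(R\log R\log\log R)$ rather than $O(R)$; invoking Lemma \ref{lemma:additive character sum2} with $s=1$, as the paper does, yields $O(R)$ cleanly.)
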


\begin{proof}

The first estimate follows by a direct application of Lemma \ref{lemma:additive character sum2}:
\begin{align*} \sum_{a=1}^q \rho^*_R(x;q,a) =  \sum_{q<n\leq x }F_R(n)   &= (x-q) \sum_{\substack{r\leq R \\ r\mid 1}}    \frac{\mu^2(r)\mu((r,n))\phi((r,n))}{\phi(r)}+O(R) \\
&= x-q +O(R).
\end{align*}

To prove the second, we first use Möbius inversion, and then apply Lemma \ref{lemma:additive character sum2} with $a=0$. This gives the estimate
\begin{align*}
\sum_{\substack{1\leq a \leq q \\ (a,q)=1}} \rho^*_R(x;q,a) = \sum_{\substack{  q<n\leq x \\ (n,q)=1}} F_R(n) &= \sum_{d\mid q} \mu(d)  \sum_{\substack{q<n\leq x \\ n\equiv 0 \bmod d}} F_R(n) \\
& = (x-q)\sum_{d\mid q} \frac{\mu(d)}d \sum_{\substack{r\leq R \\ r\mid d}} \mu(r)+O(2^{\omega(q)} R)\\
& = (x-q)+(x-q)\sum_{\substack{d\mid q \\ d\neq 1}} \frac{\mu(d)}d \sum_{\substack{r\leq R \\ r\mid d}} \mu(r)+O(2^{\omega(q)} R).
\end{align*}
Now, writing $q':=\prod_{p\mid q} p$, we have that

\begin{align*}
\sum_{\substack{ d\mid q \\ d\neq 1}} \frac{\mu(d)}d \sum_{\substack{r\leq R \\ r\mid d}} \mu(r)&= \sum_{\substack{ d\mid q' \\ d\neq 1}} \frac{\mu(d)}d \sum_{\substack{r> R \\ r\mid d}} \mu(r) \ll \sum_{\substack{ r\mid q' \\ r>R}} \sum_{\substack{d\mid q' \\ r\mid d}} \frac {1}d \\&=  \prod_{p\mid q} \left( 1+\frac 1p \right) \sum_{\substack{ r\mid q' \\ r>R}} \frac 1r \prod_{p\mid r} \left( 1+\frac 1p \right)^{-1} \\
&\leq \frac 1R \prod_{p\mid q} \left( 1+\frac 1p \right)\sum_{\substack{ r\mid q'}}  \prod_{p\mid r} \left( 1+\frac 1p \right)^{-1}.
\end{align*}
The proof follows by multiplicativity.

\end{proof}

We now come back to the discrepancies of $\rho_R^*(x;q,a)$ in arithmetic progressions.

\begin{proof}[Proof of Proposition \ref{proposition discrepancies rho}]
Combining Lemmas \ref{lemma average of rho coprime} and \ref{lemma sum coprime rho full range} with Lemma \ref{lemma sum over r coprime to a} and \cite[Lemma 5.9]{Fi}\footnote{The exponent $205/538$ in this estimate can be replaced with $171/448$ thanks to Bourgain's result \cite{Bo}.} gives that
\begin{multline*}
\sum_{\substack{q\leq \frac xM \\ (q,a)=1}} \rho_R^*(x;q,a) = x \Big(C_1(a)\log \frac xM +C_1(a)+C_2(a) +\frac{\phi(|a|)}{|a|}\frac{\mu(a,M)}M\Big)\\-\left(\frac{\phi(|a|)}{|a|}\right)^2 \frac{x}R\Big(\log \frac x{R^2} +2\gamma-3+\sum_{p\mid a} \frac{p+1}{p-1}\log p\Big)+O_{a,\epsilon}\left(Rx^{\frac 12}+ \frac x{M^{\frac{619}{448}-\epsilon}}+\frac {x\log x}{R^{\frac 32} \exp \big(c \frac{(\log R)^{\frac 35}}{(\log\log R)^{\frac 15}} \big)} \right).
\end{multline*}
The result follows from subtracting the following classical elementary estimate (see for instance \cite[Lemma 13.1]{FGHM}, in which we can replace $\tau(a)$ by $2^{\omega(a)}$):
$$ \sum_{\substack{ q\leq \frac xM \\ (q,a)=1}} \frac x{\phi(q)}  = x\left[C_1(a) \log \frac xM +C_1(a)+C_2(a) +O\left( 2^{\omega(a)} M \frac{\log x}x\right)\right].$$

\end{proof}

\section*{Acknowledgements}
I would like to thank Robert C. Vaughan for introducing me to his approximation, as well as Régis de la Bretèche and James Maynard for fruitful conversations. This work was partly accomplished while the author was at the University of Michigan and at Universit\'e Paris Diderot, and was supported by a Postdoctoral Fellowship from the Fondation Sciences Math\'ematiques de Paris and a Discovery Grant from the NSERC. 

\appendix

\end{document}